\newcommand{\C} {\ensuremath{\mathbb{C}}}
\newcommand{\OO}{\mathcal{O}}
\renewcommand{\o}[1]{\overline{#1}}
\newcommand{\dq}{\overline{\partial}}
\DeclareMathOperator{\Sing}{Sing}
\DeclareMathOperator{\Dom}{Dom}
\newtheorem {satz} {Satz} [section]
\newtheorem {lem} [satz] {Lemma}
\newtheorem {cor} [satz] {Corollary}
\newtheorem {defn} [satz] {Definition}
\newtheorem {prop} [satz] {Proposition}
\newtheorem {thm} [satz] {Theorem}
\DeclareMathOperator{\supp}{supp}
\renewcommand{\Im}{\mbox{Im }}
\renewcommand{\theta}{\vartheta}
\title[$L^2$-Serre duality and rational singularities] 
{$L^2$-Serre duality on singular complex spaces and rational singularities}
\author{J. Ruppenthal}
\address{Department of Mathematics, University of Wuppertal, Gau{\ss}str. 20, 42119 Wuppertal, Germany.}
\email{ruppenthal@uni-wuppertal.de}
\date{\today}
\subjclass[2000]{32J25, 14F10, 14E15, 32C35, 32C37, 32W05}
\keywords{Cauchy-Riemann equations, $L^2$-theory, Serre duality, vanishing theorems, singular complex spaces, canonical singularities, rational singularities.}
\begin{document}

\begin{abstract} 
In the present paper, we devise a version of topological $L^2$-Serre\linebreak
duality for singular complex spaces with arbitrary singularities.
This duality is used to deduce various new $L^2$-vanishing theorems for the $\dq$-equation on singular spaces.
It is shown that complex spaces with rational singularities behave quite tame with respect to the $\dq$-equation
in the $L^2$-sense. More precisely: a singular point is rational if and only if the $L^2$-$\dq_s$-complex is exact in this point.
So, we obtain an $L^2$-$\dq$-resolution of the structure sheaf in rational singular points.
\end{abstract}

\maketitle

~\\[-12mm]

\section{Introduction}

The Cauchy-Riemann operator $\dq$ plays a fundamental role in Complex Analysis and Complex Geometry.
On complex manifolds, functions --  or more generally distributions -- are holomorphic if and only if they are in the kernel of the $\dq$-operator,
and the same holds in a certain sense on normal complex spaces.
For forms of arbitrary degree, the importance of the $\dq$-operator appears strikingly for example in the notion of $\dq$-cohomology
which can be used to represent the cohomology of complex manifolds by the Dolbeault isomorphism.
Solving $\dq$-equations, i.e., $\dq$-vanishing theorems, play a central role in a vast number of problems in Complex Analysis and Geometry,
let us just mention the Cousin problems.

However, a huge part of the $\dq$-theory is still restricted to the smooth setting.
In the present paper, we introduce a version of topological $L^2$-Serre duality for $\dq$-cohomology classes on singular complex spaces with arbitrary singularities,
which allows to deduce some new $L^2$-vanishing theorems for the $\dq$-operator on singular spaces.
We obtain 
an explicit $L^2$-$\dq$-resolution of the structure sheaf of spaces with rational singularities
(and an $L^2$-$\dq$-characterization of rationality).

The $L^2$-theory for the $\dq$-operator is of particular importance in Complex Analysis and Geometry
and has become indispensable for the subject after the fundamental work of
H\"ormander on $L^2$-estimates and existence theorems for the $\dq$-operator \cite{Hoe1}
and the related work of Andreotti and Vesentini \cite{AnVe}.
Important applications of the $L^2$-theory are e.g. the Ohsawa-Takegoshi extension theorem \cite{OT},
Siu's analyticity of the level sets of Lelong numbers \cite{Siu0}
or the invariance of plurigenera \cite{Siu} -- just to name some.

On the other hand, it is almost dispensable to mention the importance of Serre duality at all.
It is one of the most important tools in Complex Algebraic Geometry and Complex Analysis.
On singular spaces, the classical Serre duality has to be replaced by the more involved Grothendieck duality
(developed by Ramis and Ruget in the analytic setting \cite{RR}).
In the present paper, we propose a version of
topological $L^2$-Serre duality which is well-adapted to singular spaces.


\medskip
The first problem one has to face when studying the $\dq$-equation on singular spaces
is that it is not clear what kind of differential forms and operators one should consider.
Recently, there has been some progress by different approaches.

Andersson and Samuelsson developed in \cite {AS} Koppelman integral formulas for the $\dq$-equation on arbitrary singular complex spaces
which allow for a $\dq$-resolution of the structure sheaf in terms of certain fine sheaves of currents, called $\mathcal{A}$-sheaves.
These $\mathcal{A}$-sheaves are defined by an iterative procedure of repeated application of singular integral operators.
A second, somewhat more explicit approach is as follows: consider
differential forms which are defined on the regular part of a singular variety
and which are square-integrable up to the singular set.
This setting seems to be quite fruitful and has some history by now (see \cite{PS1}).
Also in this direction, some progress has been made recently.
{\O}vrelid--Vassiliadou and the author obtained in \cite{OV3} and \cite{eins} a pretty complete description of the $L^2$-cohomology
of the $\dq$-operator (in the sense of distributions) at isolated singularities.

In this setting, we understand the class of objects with which we deal very well (just $L^2$-forms), 
but the disadvantage is a different one. Whereas the $\dq$-equation is locally solvable for $\dq$-closed $(0,q)$-forms
in the category of $\mathcal{A}$-sheaves by the Koppelman formulas in \cite{AS}, 
there are local obstructions to solving the $\dq$-equation in the $L^2$-sense at singular points (see e.g. \cite{FOV2},  \cite{OV3}, \cite{eins}).
So, there can be no $L^2$-$\dq$-resolution for the structure sheaf in general.

The starting point of the present paper was the idea that the $\dq$-operator in the $L^2$-sense
may behave very well on spaces with canonical singularities. There are several results which substantiate this idea.
First, reflexive differential forms are weakly holomorphic on such spaces,
i.e., they can be represented as the push-forward of holomorphic forms under a resolution of singularities (\cite{GKKP}, Theorem 1.4).
It follows that holomorphic differential forms on the regular part of the variety are square integrable.
Second, it is known that the $\dq$-equation is solvable in the $L^2$-sense for $(0,q)$-forms
on affine cones over projective varieties of low degree\footnote{Such varieties are canonical.}
(see \cite{Rp7}, \cite{OV3}, Section 7, Example 3, \cite{LR} and \cite{LR2}).
Third, there is a long list of related positive results on the $\dq$-equation at rational double points\footnote{
Rational double points are precisely the possible canonical singularities in dimension two.}
(see \cite{AZ1}, \cite{AZ2}, \cite{AS},
and also \cite{Rp7}, \cite{LR} for the $A_1$-singularity).

The underlying idea is that canonical singularities are rational (see e.g. \cite{Kol}, Theorem 11.1),
i.e., we expect that the singularities do not contribute to the local cohomology in a certain sense.
Pursuing this idea, it turned out that there is a notion of $L^2$-$\dq$-cohomology for $(0,q)$-forms
which can be described completely in terms of a resolution of singularities (see Theorem \ref{thm:main3a} below).
A singular point is rational if and only if this very $L^2$-$\dq$-complex is exact in that point (Corollary \ref{cor:main2}).
If the underlying space has rational singularities, 
then we obtain an $L^2$-$\dq$-resolution of the structure sheaf.
This gives even more hope for the idea that spaces with canonical singularities may allow for
a comprehensive $L^2$-theory for the $\dq$-operator.

One of our main tools is a version of topological $L^2$-Serre duality for singular complex spaces with arbitrary singularities.
In order to establish and apply this kind of Serre duality, we have to overcome several difficulties
which do not appear in the smooth setting.
We have to establish some new duality relations between different closed $L^2$-extensions of the $\dq$-operator
at arbitrary singularities, and to provide new Hausdorffness results for some cohomology groups
on singular spaces.

\medskip
To explain our results more precisely,
let us introduce some notation (see Section~\ref{ssec:complexes} for the details).
Let $X$ be a Hermitian complex space\footnote{A Hermitian complex space $(X,g)$ is a reduced complex space $X$ with a metric $g$ on the regular part
such that the following holds: If $x\in X$ is an arbitrary point there exists a neighborhood $U=U(x)$ and a
biholomorphic embedding of $U$ into a domain $G$ in $\C^N$ and an ordinary smooth Hermitian metric in $G$
whose restriction to $U$ is $g|_U$.}
of pure dimension $n$ and $F\rightarrow X$ a Hermitian holomorphic line bundle.
We denote by $\mathcal{L}^{p,q}(F)$ the sheaf of germs of $F$-valued $(p,q)$-forms on the regular part of $X$
which are square-integrable on $K\setminus\Sing X$ for compact sets $K$.\footnote{This is what we mean by
square-integrable up to the singular set.}

Due to the incompleteness of the metric on $X\setminus \Sing X$, there are different reasonable definitions
of the $\dq$-operator on $\mathcal{L}^{p,q}(F)$-forms. To be more precise, let $\dq_{cpt}$ be the $\dq$-operator
on smooth forms with support away from the singular set $\Sing X$.
Then $\dq_{cpt}$ can be considered as a densely defined operator $\mathcal{L}^{p,q}(F) \rightarrow \mathcal{L}^{p,q+1}(F)$.
One can now consider various closed extensions of this operator. The two most important are the maximal closed extension,
i.e., the $\dq$-operator in the sense of distributions which we denote by $\dq_w$,
and the minimal closed extension, i.e., the closure of the graph of $\dq_{cpt}$ which we denote by $\dq_s$.
Let $\mathcal{C}^{p,q}(F)$ be the domain of definition of $\dq_w$ which is a subsheaf of $\mathcal{L}^{p,q}(F)$,
and $\mathcal{F}^{p,q}(F)$ the domain of definition of $\dq_s$ which in turn is a subsheaf of $\mathcal{C}^{p,q}(F)$.
We obtain complexes of fine sheaves
\begin{eqnarray}\label{eq:intro1}
\mathcal{C}^{p,0}(F) \overset{\dq_w}{\longrightarrow} \mathcal{C}^{p,1}(F) \overset{\dq_w}{\longrightarrow} 
\mathcal{C}^{p,2}(F) \overset{\dq_w}{\longrightarrow} ...
\end{eqnarray}
and
\begin{eqnarray}\label{eq:intro2}
\mathcal{F}^{p,0}(F) \overset{\dq_s}{\longrightarrow} \mathcal{F}^{p,1}(F) \overset{\dq_s}{\longrightarrow} \mathcal{F}^{p,2}(F) \overset{\dq_s}{\longrightarrow} ...
\end{eqnarray}

If $F$ is just the trivial line bundle, then $\mathcal{K}_X:=\ker\dq_w \subset \mathcal{C}^{n,0}$ is the canonical sheaf of Grauert--Riemenschneider
and $\mathcal{K}_X^s:=\ker\dq_s \subset \mathcal{F}^{n,0}$ is the sheaf of holomorphic $n$-forms with a certain boundary condition
that was introduced in \cite{eins}.
We will see below that $\widehat{\OO}_X=\ker\dq_s \subset \mathcal{F}^{0,0}$ for the sheaf of weakly holomorphic functions $\widehat{\OO}_X$.

It is clear that \eqref{eq:intro1} and \eqref{eq:intro2} are exact in regular points of $X$.
Exactness in singular points is equivalent to the difficult problem of solving $\dq$-equations locally in the $L^2$-sense at singularities,
which is not possible in general (see e.g. \cite{FOV2}, \cite{OV1}, \cite{OV3}, \cite{Rp1}, \cite{Rp7}, \cite{eins}).
However, it is known that \eqref{eq:intro1} is exact for $p=n$,
and that \eqref{eq:intro2} is exact for $p=n$ if $X$ has only isolated singularities
(\cite{PS1} and \cite{eins}; see Section \ref{ssec:solvability}).
In these cases, the complexes \eqref{eq:intro1} and \eqref{eq:intro2} are fine resolutions of the canonical sheaves $\mathcal{K}_X$ and $\mathcal{K}_X^s$,
respectively.

For an open set $\Omega\subset X$, we denote by $H^{p,q}_{w,loc}(\Omega,F)$ the cohomology of the complex \eqref{eq:intro1},
and by $H^{p,q}_{w,cpt}(\Omega,F)$ the cohomology of \eqref{eq:intro1} with compact support. Analogously, let $H^{p,q}_{s,loc}(\Omega,F)$
and $H^{p,q}_{s,cpt}(\Omega,F)$ be the cohomology groups of \eqref{eq:intro2}.
These $L^2$-cohomology groups inherit the structure of topological vector spaces,
which are locally convex Hausdorff spaces if the corresponding $\dq$-operators have closed range.\footnote{
Note that different Hermitian metrics lead to $\dq$-complexes which are equivalent on relatively compact subsets.
So, one can put any Hermitian metric on $X$ in many of the results below.}

We can now formulate the main results of the present paper.

\begin{thm}\label{thm:main3a}
Let $X$ be a Hermitian complex space
, $\pi: M\rightarrow X$ a resolution of singularities
and $\Omega\subset X$ a holomorphically convex domain. Then push-forward of forms induces for all $q\geq0$ a natural topological isomorphism
\begin{eqnarray}\label{eq:main3a}
H^q\big(\pi^{-1}(\Omega),\OO_M\big) 
\overset{\cong}{\longrightarrow} H^{0,q}_{s,loc}(\Omega).
\end{eqnarray}
\end{thm}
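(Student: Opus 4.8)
The plan is to realize the complex of fine sheaves $(\mathcal{F}^{0,\bullet},\dq_s)$ on $X$ as a representative of the derived push-forward $R\pi_*\OO_M$ and then to extract \eqref{eq:main3a} from the Leray spectral sequence. Fix a smooth Hermitian metric $\wt g$ on $M$ and write $E:=\pi^{-1}(\Sing X)$, so that $\pi$ restricts to a biholomorphism $M\setminus E\cong X\setminus\Sing X$. Push-forward of forms then defines a morphism of complexes
\begin{equation*}
\Phi\colon\big(\pi_*\mathcal{A}^{0,\bullet}_M,\dq\big)\longrightarrow\big(\mathcal{F}^{0,\bullet},\dq_s\big),
\end{equation*}
where $\mathcal{A}^{0,\bullet}_M$ is the Dolbeault complex of smooth $(0,\bullet)$-forms on $M$. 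Since $\pi$ is proper and $\mathcal{A}^{0,\bullet}_M$ is a fine, hence $\pi_*$-acyclic, resolution of $\OO_M$, the source of $\Phi$ represents $R\pi_*\OO_M$ and has cohomology sheaves $\mathcal{H}^q=R^q\pi_*\OO_M$; moreover $\pi_*\mathcal{A}^{0,q}_M$ and $\mathcal{F}^{0,q}$ are fine $C^\infty_X$-modules. Hence, once $\Phi$ is shown to be a quasi-isomorphism, the hypercohomology of a complex of $\Gamma$-acyclic sheaves equals the cohomology of its global sections, and together with the identity $R\Gamma(\Omega,-)\circ R\pi_*=R\Gamma(\pi^{-1}(\Omega),-)$ this gives, for every holomorphically convex $\Omega$,
\begin{equation*}
H^{0,q}_{s,loc}(\Omega)=H^q\big(\Gamma(\Omega,\mathcal{F}^{0,\bullet})\big)\cong\mathbb{H}^q(\Omega,R\pi_*\OO_M)\cong H^q\big(\pi^{-1}(\Omega),\OO_M\big),
\end{equation*}
the isomorphism being induced by $\Phi$, that is, by push-forward of forms.

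First I would check that $\Phi$ is well defined and continuous. By the footnote convention the metric $g$ is locally the restriction of a smooth Hermitian metric under an embedding $X\hookrightarrow\C^N$, so $M\overset{\pi}{\longrightarrow}X\hookrightarrow\C^N$ is holomorphic with locally bounded differential, whence $\pi^*g\leq C\wt g$ on relatively compact sets. Diagonalizing $\pi^*g$ against $\wt g$ with eigenvalues $\lambda_1,\dots,\lambda_n\leq C$ and writing $\pi^*dV_g=J\,dV_{\wt g}$ with $J=\prod_i\lambda_i$, a $(0,q)$-form $\eta=\sum_{|I|=q}\eta_I\,\o{\omega}_I$ in a $\wt g$-orthonormal coframe satisfies
\begin{equation*}
|\eta|^2_{\pi^*g}\,J=\sum_{|I|=q}|\eta_I|^2\prod_{j\notin I}\lambda_j\leq C^{\,n-q}\,|\eta|^2_{\wt g}.
\end{equation*}
Integrating over $M\setminus E$ gives $\|\pi_*\eta\|_{g}\leq C'\|\eta\|_{\wt g}$ on relatively compact sets, so $\pi_*$ maps smooth forms into $\mathcal{L}^{0,q}\cap\Dom\dq_w$ and commutes with $\dq$ over the regular part. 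To land in the minimal domain $\mathcal{F}^{0,q}=\Dom\dq_s$ I would cut off on $M$: as $E$ is a divisor, of real codimension two, there are functions $\rho_\nu$ vanishing near $E$ with $\rho_\nu\nearrow1$ and $\|\dq\rho_\nu\|_{L^2(\wt g)}\to0$; then $\pi_*(\rho_\nu\eta)$ has support off $\Sing X$ and, applying the boundedness estimate to $\eta-\rho_\nu\eta$ and to $\dq\rho_\nu\wedge\eta$, converges to $\pi_*\eta$ in the graph norm of $\dq_s$. For $q=0$ this recovers $\w\OO_X=\ker\dq_s=\pi_*\OO_M$, which anchors the construction.

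The decisive step is that $\Phi$ is a quasi-isomorphism, i.e.\ that the induced maps $R^q\pi_*\OO_M\to\mathcal{H}^q(\mathcal{F}^{0,\bullet},\dq_s)$ are isomorphisms. Over $X\setminus\Sing X$ this is immediate, both complexes being the ordinary Dolbeault complex there. At a point $x\in\Sing X$ it amounts to the local form of the theorem: for a cofinal family of small Stein neighborhoods $U\ni x$, push-forward must induce an isomorphism
\begin{equation*}
H^q\big(\pi^{-1}(U),\OO_M\big)\overset{\cong}{\longrightarrow}H^{0,q}_{s,loc}(U).
\end{equation*}
This is the main obstacle. The left-hand groups are finite dimensional and controlled by Grauert's coherence theorem for the proper map $\pi$, while the right-hand ones encode local $L^2$-solvability of $\dq_s$ at the singularity. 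One cannot simply invert by pull-back: the estimate above degenerates in the reverse direction, since $\prod_{j\notin I}\lambda_j$ may tend to $0$, so $\pi^*$ need not preserve $L^2$ on $(0,q)$-forms for $q<n$. Instead I would, on the one hand, represent a given $\dq_s$-closed $L^2$-class on $U$ by a smooth form on $\pi^{-1}(U)$ modulo a $\dq_s$-exact term, by a regularization argument near $E$; and, on the other hand, show that a smooth $\dq$-closed form on $\pi^{-1}(U)$ whose push-forward is $\dq_s$-exact is already $\dq$-exact over some $\pi^{-1}(U')$. Here finiteness on the $M$-side together with the boundedness estimate should force bijectivity.

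Finally, for the topological statement, $\Phi$ is continuous by the boundedness estimate and both sides carry their natural locally convex topologies. Once bijectivity is established I would upgrade it to a homeomorphism via the open mapping theorem, which applies precisely because the operators $\dq_s$ in question have closed range—this is where the Hausdorffness and closed-range results announced in the introduction are used—and because for holomorphically convex $\Omega$ the groups are of Fréchet type. Naturality of \eqref{eq:main3a} is automatic, as everything is induced by the single operation of push-forward of forms.
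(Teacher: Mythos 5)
Your framework (push-forward as a morphism of complexes, fineness and $\pi_*$-acyclicity, Leray/hypercohomology, the eigenvalue computation showing $\|\pi_*\eta\|_g\leq C'\|\eta\|_{\wt g}$ for $(0,q)$-forms, and the cut-off argument landing in $\Dom\dq_s$) is all sound, but the proof has a genuine gap exactly where you flag "the main obstacle": you never establish that $\Phi$ is a quasi-isomorphism at singular points, i.e.\ that $(R^q\pi_*\OO_M)_x\to\mathcal{H}^q(\mathcal{F}^{0,\bullet})_x$ is bijective. That local statement \emph{is} the theorem (it is recorded as Theorem \ref{cor:main1}, a corollary of Theorem \ref{thm:main3a}, not an input to it), so deferring it to "a regularization argument near $E$" plus "finiteness \ldots should force bijectivity" leaves the entire content unproven. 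The difficulty is structural, not technical: as you yourself observe, $\pi^*$ does not preserve $L^2$ on $(0,q)$-forms for $q<n$, so there is no direct map in the other direction to compare with, and no local $L^2$-solvability result for $\dq_s$ on $(0,q)$-forms at arbitrary singularities is available to run a Dolbeault-type induction.

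The paper circumvents this by never attacking the $(0,q)$-side directly. For holomorphically convex $\Omega$, the groups $H^{0,q}_{s,loc}(\Omega)$, $H^{n,n-q}_{w,cpt}(\Omega)$, $H^{0,q}_{w,loc}(\pi^{-1}(\Omega))$, $H^{n,n-q}_{w,cpt}(\pi^{-1}(\Omega))$ are shown to be Hausdorff (Theorem \ref{thm:main2}, via coherence of $\mathcal{K}_X$, Prill's lemma for holomorphically convex spaces, and the equivalence of the Fr\'echet topologies in Theorem \ref{thm:topology}); the topological $L^2$-Serre duality (Theorem \ref{thm:main1}) then identifies $H^{0,q}_{s,loc}(\Omega)$ with the dual of $H^{n,n-q}_{w,cpt}(\Omega)$ and $H^{0,q}_{w,loc}(\pi^{-1}(\Omega))$ with the dual of $H^{n,n-q}_{w,cpt}(\pi^{-1}(\Omega))$. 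On the $(n,\cdot)$-side the comparison map \emph{is} available, because pull-back of $(n,q)$-forms preserves $L^2$, $\mathcal{K}_X=\pi_*\mathcal{K}_M$, and Takegoshi's vanishing $R^q\pi_*\mathcal{K}_M=0$ make both complexes fine resolutions of the same sheaf (see \eqref{eq:new12} and Lemma \ref{lem:topology2}). Dualizing that known topological isomorphism yields \eqref{eq:main3a}. If you want to salvage your direct approach, you would at minimum need to prove the local exactness/comparison for $\dq_s$ on $(0,q)$-forms at arbitrary singular points, which is precisely what the duality argument is designed to avoid.
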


From that we obtain for the local $L^2$-$\dq_s$-cohomology immediately:

\begin{cor}\label{cor:main1}
Let $X$ be a Hermitian complex space, $\pi: M \rightarrow X$ a resolution of singularities and $q\geq0$.
Then:
\begin{eqnarray}\label{eq:cormain1}
\big(\mathcal{H}^q (\mathcal{F}^{0,*})\big)_x \cong \big(R^q \pi_* \OO_M\big)_x\ \ \ \ \ \forall x\in X.
\end{eqnarray}
Particularly, for $q=0$,
\begin{eqnarray}\label{eq:weaklyholomorphic}
\ker \dq_s^{0,0} = \mathcal{H}^0(\mathcal{F}^{0,*}) = \widehat{\OO}_X,
\end{eqnarray}
where $\widehat{\OO}_X$ denotes the sheaf of germs of weakly holomorphic functions.
\end{cor}

It follows that $x\in X$ is a normal point exactly if $\big(\ker \dq_s^{0,0}\big)_x= \OO_{X,x}$.
As $x\in X$ is a rational point if it is normal and $(R^q\pi_* \OO_M)_x=0$ for $q>0$, we get also:

\begin{cor}\label{cor:main2}
Let $X$ be a Hermitian complex space. Then the $L^2$-$\dq$-complex
\begin{eqnarray}\label{eq:exactnessM01}
0\rightarrow \OO_X \longrightarrow \mathcal{F}^{0,0} \overset{\dq_s}{\longrightarrow}
\mathcal{F}^{0,1} \overset{\dq_s}{\longrightarrow} \mathcal{F}^{0,2} 
\overset{\dq_s}{\longrightarrow} \mathcal{F}^{0,3} \overset{\dq_s}{\longrightarrow} ...
\end{eqnarray}
is exact in a point $x\in X$ if and only if $x$ is a rational point.

Hence, if $X$ has only rational singularities, then \eqref{eq:exactnessM01}
is a fine resolution of the structure sheaf $\OO_X$. 
\end{cor}

If $X$ has only rational singularities, then Corollary \ref{cor:main2} yields directly further finiteness and vanishing results,
e.g. if $X$ is $q$-convex or $q$-complete. 
An essential tool in the proof of Theorem \ref{thm:main3a} is the following new version of topological $L^2$-Serre duality
on singular complex spaces with arbitrary singularities:

\begin{thm}[{\bf $L^2$-Serre duality}]\label{thm:main1}
Let $X$ be a Hermitian complex space of pure dimension $n$, $\Omega\subset X$ an open set,
$F\rightarrow X$ a Hermitian holomorphic line bundle, and let $0\leq p,q \leq n$.
If $H^{p,q}_{w,loc}(\Omega,F)$ and $H^{p,q+1}_{w,loc}(\Omega,F)$ are Hausdorff, then the mapping
\begin{eqnarray}\label{eq:serreduality}
\mathcal{L}^{p,q}(\Omega,F) \times \mathcal{L}^{n-p,n-q}_{cpt}(\Omega,F^*) \rightarrow \C\ \ ,\ (\eta,\omega) \mapsto \int_{\Omega^*} \eta\wedge\omega,
\end{eqnarray}
induces a non-degenerate pairing
\begin{eqnarray*}
H^{p,q}_{w,loc}(\Omega,F) \times H^{n-p,n-q}_{s,cpt}(\Omega,F^*) \rightarrow \C
\end{eqnarray*}
under which
$H^{n-p,n-q}_{s,cpt}(\Omega,F^*)$ is algebraically isomorphic to the dual space of $H^{p,q}_{w,loc}(\Omega,F)$ and vice versa.

If $H^{p,q}_{s,loc}(\Omega,F)$ and $H^{p,q+1}_{s,loc}(\Omega,F)$ are Hausdorff, then \eqref{eq:serreduality}
induces a non-degenerate pairing
\begin{eqnarray*}
H^{p,q}_{s,loc}(\Omega,F) \times H^{n-p,n-q}_{w,cpt}(\Omega,F^*) \rightarrow \C
\end{eqnarray*}
under which $H^{n-p,n-q}_{w,cpt}(\Omega,F^*)$ is algebraically isomorphic to the dual space of $H^{p,q}_{s,loc}(\Omega,F)$ and vice versa.
\end{thm}

\smallskip
If the topological vector spaces $H^{p,q}_{w/s,loc}(\Omega,F)$, $H^{p,q+1}_{w/s,loc}(\Omega,F)$ are non-Hausdorff,
then the statement of Theorem \ref{thm:main1} holds at least for the separated cohomology groups $\overline{H}_{w/s} = \ker \dq_{w/s} / \overline{\Im \dq_{w/s}}$
(Theorem \ref{thm:duality1} and Theorem \ref{thm:duality2}).\footnote{The notation $w/s$ refers either to the index $w$ or the index $s$ in the whole statement.}
Though Theorem \ref{thm:main1} looks at first glance like a standard version of Serre duality,
there are two essential difficulties in its proof which do usually not appear.
First, the $\dq$-operators under consideration are just
closed densely defined operators in the Fr\'echet spaces $\mathcal{L}^{p,q}(\Omega,F)$ and the $(LF)$-spaces $\mathcal{L}^{n-p,n-q}_{cpt}(\Omega,F^*)$.\footnote{Usually, one considers either closed densely defined operators between Banach spaces,
or operators between Fr\'echet-Schwartz, i.e. (FS), and $(DFS)$-spaces which are defined on the whole spaces, not just on dense subsets.
This setting is considered e.g. in \cite{Serre}, \cite{RR} or \cite{AK}. (FS) and (DFS)-spaces have nicer properties
than Fr{\'e}chet and (LF)-spaces.}
Second, we have to show that the operators $\dq_w$ and $\dq_s$ are topologically dual at arbitrary singularities.
This point is the main technical difficulty in this paper.

Note that $H^{p,q}_{w/s,loc}(\Omega,F)$ is Hausdorff if and only if $\dq_{w/s}$ has closed range in $\mathcal{L}^{p,q}(\Omega,F)$,
and to decide whether this is the case is usually as difficult as solving the corresponding $\dq$-equation.
Using local $L^2$-$\dq$-solution results for singular spaces and the theory of Fr\'echet sheaves, we will show at least:

\begin{thm}\label{thm:main2}
Let $X$ be a Hermitian complex space of pure dimension $n$, $F\rightarrow X$ a Hermitian holomorphic line bundle, and let $0\leq p,q \leq n$.
Let $\Omega \subset X$ be a holomorphically convex open subset. Then the topological vector spaces
$$H^{n,q}_{w,loc}(\Omega,F)\ \ ,\ \ H^{n,q}_{w,cpt}(\Omega,F)\ \ ,\ \ H^{0,n-q}_{s,cpt}(\Omega,F^*)\ \ ,\ \ H^{0,n-q}_{s,loc}(\Omega,F^*)$$
are Hausdorff for all $0\leq q\leq n$.

If $X$ has only isolated singularities, then the topological vector spaces
$$H^{n,q}_{s,loc}(\Omega,F)\ \ ,\ \ H^{0,n-q}_{w,cpt}(\Omega,F^*)$$
are Hausdorff for all $0\leq q\leq n$, too.

If $X$ has only homogeneous (conical) isolated singularities, then the topological vector spaces
$$H^{n,q}_{s,cpt}(\Omega,F)\ \ ,\ \ H^{0,n-q}_{w,loc}(\Omega,F^*)$$
are Hausdorff for all $0\leq q\leq n$, too.
\end{thm}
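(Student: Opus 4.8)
The plan is to turn every Hausdorffness assertion into a closed-range statement and to prove these by two complementary tools. Recall (as noted just before the statement) that each of the groups above is Hausdorff if and only if the relevant operator $\dq_w$ resp. $\dq_s$ has closed range in its target $L^2$-space. The six groups fall into Serre-dual pairs: $H^{n,q}_{w,loc}(\Omega,F)$ with $H^{0,n-q}_{s,cpt}(\Omega,F^*)$, and $H^{n,q}_{w,cpt}(\Omega,F)$ with $H^{0,n-q}_{s,loc}(\Omega,F^*)$ (first part); $H^{n,q}_{s,loc}(\Omega,F)$ with $H^{0,n-q}_{w,cpt}(\Omega,F^*)$ (second part); and $H^{n,q}_{s,cpt}(\Omega,F)$ with $H^{0,n-q}_{w,loc}(\Omega,F^*)$ (third part). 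Within each pair the two operators are mutually transpose under $(\eta,\omega)\mapsto\int_{\Omega^*}\eta\wedge\omega$; by the closed-range theorem for Fr\'echet and $(DF)$-spaces, one then has closed range exactly when the other does. Here I invoke not Theorem~\ref{thm:main1} itself (whose hypotheses are the very Hausdorffness we are after), but the underlying operator-level duality. Thus it suffices to establish, in each pair, one member directly, and to obtain that member from a genuine fine resolution of a coherent sheaf, to which the theory of Fr\'echet sheaves applies.

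For the first part I would start from the maximal extension. Local solvability of the $\dq$-equation in top degree, valid at arbitrary singularities, makes the complex \eqref{eq:intro1} at $p=n$, twisted by $F$, a fine resolution of the coherent Grauert--Riemenschneider sheaf $\mathcal{K}_X\otimes F$. On the holomorphically convex domain $\Omega$, Fr\'echet-sheaf theory (and its dual $(DFS)$-version) then renders the associated $L^2$-cohomology separated: $H^{n,q}_{w,loc}(\Omega,F)$ carries a Fr\'echet topology and $H^{n,q}_{w,cpt}(\Omega,F)$ a $(DFS)$-topology, both Hausdorff, for all $0\le q\le n$. Feeding these two facts into the transpose mechanism of the first paragraph gives at once that the Serre-dual groups $H^{0,n-q}_{s,cpt}(\Omega,F^*)$ and $H^{0,n-q}_{s,loc}(\Omega,F^*)$ are Hausdorff as well, which is the whole first assertion.

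The second and third parts run the same scheme from the minimal extension. If $X$ has only isolated singularities, local solvability of $\dq_s$ in top degree (cf.\ \cite{eins}) makes \eqref{eq:intro2} at $p=n$, twisted by $F$, a fine resolution of the coherent sheaf $\mathcal{K}_X^s\otimes F$; Fr\'echet-sheaf theory gives $H^{n,q}_{s,loc}(\Omega,F)$ Hausdorff, and the transpose step yields the dual $H^{0,n-q}_{w,cpt}(\Omega,F^*)$, settling the second part. The third part is the genuinely delicate one, since it needs the \emph{compactly supported} cohomology $H^{n,q}_{s,cpt}(\Omega,F)$ of this minimal resolution: for the minimal extension the passage from ``resolution of a coherent sheaf'' to a separated $(DF)$-topology on the compactly supported cohomology is not automatic, because it depends on the regularity of the $(LF)$-inductive limit $\Gamma_c=\varinjlim_K\Gamma_K$ near $\Sing X$. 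For homogeneous (conical) singularities I would exploit the $\R_{>0}$-scaling of the cone: separating variables along the link turns $\dq_s$ into a Mellin family of operators in the radial variable, whose closed range in the admissible range of weights is read off from the discreteness of the indicial roots. This gives $H^{n,q}_{s,cpt}(\Omega,F)$ Hausdorff, and the transpose step then delivers $H^{0,n-q}_{w,loc}(\Omega,F^*)$.

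The hard part is the input that legitimizes every transpose step: that $\dq_w$ and $\dq_s$ are genuinely topologically dual operators \emph{at arbitrary singularities}, the point singled out in the introduction. Concretely one must show that the integration by parts $\int\dq_w\eta\wedge\omega=\pm\int\eta\wedge\dq_s\omega$ holds with no residual contribution from $\Sing X$, and, more sharply, that the domain of $\dq_w$ is \emph{exactly} the transpose domain of $\dq_s$ and conversely; the incompleteness of the metric on $X\setminus\Sing X$ makes both the vanishing of boundary terms and this sharp domain identification nontrivial, and I expect to secure them by cut-off and regularization arguments localized at the singular set. A secondary obstacle, closely tied to the first, is precisely the separatedness of the compactly supported cohomology of the minimal complex; it is this $(LF)$-regularity that I do not see how to obtain beyond the conical case, and that forces the extra hypothesis in the third part.
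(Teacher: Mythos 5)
Your overall skeleton is right in two respects: you correctly identify the Serre-dual pairing of the six groups, and you correctly reduce everything to closed-range statements that are transferred across each pair by the operator-level transpose of $\dq_w$ and $\dq_s$ (the paper's Lemma~\ref{lem:range}). Your treatment of $H^{n,q}_{w,loc}(\Omega,F)$ and $H^{n,q}_{s,loc}(\Omega,F)$ also matches the paper's: coherence of $\mathcal{K}_X(F)$ resp.\ $\mathcal{K}^s_X(F)$ plus Hausdorffness of coherent cohomology on holomorphically convex domains (Prill), although you gloss over the nontrivial step that the canonical Fr\'echet topology of uniform convergence coincides with the $L^2$-topology and that the Dolbeault isomorphism is topological (Theorems~\ref{thm:topologies1} and~\ref{thm:topology}).

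There are, however, two genuine gaps. First, your claim that $H^{n,q}_{w,cpt}(\Omega,F)$ is Hausdorff ``directly'' by a $(DFS)$-version of Fr\'echet-sheaf theory has no support: there is no general theorem making compactly supported coherent cohomology on a holomorphically convex singular space separated, and the transpose mechanism cannot supply it either, because the Serre-dual partner of $H^{n,q}_{w,cpt}(\Omega,F)$ is $H^{0,n-q}_{s,loc}(\Omega,F^*)$ --- and the $(0,*)$-$\dq_s$-complex on $\Omega$ is \emph{not} known at this stage to resolve any coherent sheaf (that is exactly Theorem~\ref{thm:main3a}, which is proved \emph{using} Theorem~\ref{thm:main2}, so your route is circular for this pair). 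The paper escapes by passing to a resolution of singularities $\pi\colon M\to X$: $\pi^{-1}(\Omega)$ is again holomorphically convex, so $H^{0,n-q}_{w,loc}(\pi^{-1}(\Omega))$ is Hausdorff since $\OO_M$ is coherent; the transpose step \emph{on the manifold} $M$ then gives $H^{n,q}_{w,cpt}(\pi^{-1}(\Omega))$ Hausdorff; and the pullback $\pi^*$ of $(n,q)$-forms induces a topological isomorphism $H^{n,q}_{w,cpt}(\Omega)\cong H^{n,q}_{w,cpt}(\pi^{-1}(\Omega))$ (Lemma~\ref{lem:topology2}). Your plan omits the resolution entirely in the first two parts, and that is precisely where it fails. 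Second, for the homogeneous case your Mellin/indicial-root analysis of $\dq_s$ on the cone is an entirely different and unsubstantiated route; the paper instead observes that a homogeneous isolated singularity is resolved by a single blow-up, whence $\mathcal{K}^s_X\cong\mathcal{K}_X$ and $H^{n,q}_{s,cpt}(\Omega)\cong H^{n,q}_{w,cpt}(\pi^{-1}(\Omega))$, which is Hausdorff by the first part. If you want to salvage your plan, replace both problematic steps by the resolution-of-singularities detour.
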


A main point in the proof of Theorem \ref{thm:main2} is to show that the canonical Fr\'echet sheaf structure
of compact convergence on the coherent analytic canonical sheaves $\mathcal{K}_X$ and $\mathcal{K}_X^s$, respectively,
coincides with the Fr\'echet sheaf structure of $L^2$-convergence on compact subsets (Theorem \ref{thm:topologies1}).
This allows then to show also the topological equivalence of \v{C}ech cohomology and $L^2$-cohomology (Theorem \ref{thm:topology}).
Another considerable, non-standard difficulty in the proof of Theorem {\ref{thm:main2} is to show that $H^{n-p,n-q+1}_{s/w,loc}(\Omega,F^*)$
is Hausdorff if $H^{p,q}_{w/s,cpt}(\Omega,F)$ is Hausdorff (see Lemma \ref{lem:range2}).

As a direct application of our $L^2$-Serre duality, Theorem \ref{thm:main1},
we deduce again by use of local $\dq$-solution results for $(n,q)$-forms and the equivalence of Fr\'echet structures, Theorem \ref{thm:topology},
another main result:

\begin{thm}\label{thm:vanishing1}
Let $X$ be a Hermitian complex space of pure dimension $n$, $F\rightarrow X$ a Hermitian holomorphic line bundle
and $\Omega\subset X$ a cohomologically $q$-complete open subset, $q\geq 1$. Then
\begin{eqnarray*}
H^{n,r}_{w,loc}(\Omega,F) = H^{0,n-r}_{s,cpt}(\Omega,F^*)=0\ \ \ \mbox{ for all }\  r\geq q.
\end{eqnarray*}
If $X$ has only isolated singularities, then also
\begin{eqnarray*}
H^{n,r}_{s,loc}(\Omega,F) = H^{0,n-r}_{w,cpt}(\Omega,F^*)=0\ \ \ \mbox{ for all }\  r\geq q.
\end{eqnarray*}
\end{thm}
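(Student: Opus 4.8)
The plan is to prove the vanishing of the two ``$w$''$/$``$s$''-\emph{loc} groups by an abstract de Rham argument that reduces them to coherent sheaf cohomology, and then to transport the vanishing to the two \emph{cpt} groups through the Serre duality of Theorem~\ref{thm:main1}. First I would recall that, by the local $L^2$-solvability of $\dq_w$ for $(n,q)$-forms (exactness of \eqref{eq:intro1} for $p=n$, as stated in the excerpt), the complex of fine sheaves $\mathcal{C}^{n,*}(F)$ with differential $\dq_w$ is a resolution of the $F$-valued Grauert--Riemenschneider canonical sheaf $\mathcal{K}_X\otimes\OO(F)$. The abstract de Rham theorem then identifies $H^{n,r}_{w,loc}(\Omega,F)$, which by definition is the cohomology over $\Omega$ of \eqref{eq:intro1}, with the coherent sheaf cohomology $H^r\big(\Omega,\mathcal{K}_X\otimes\OO(F)\big)$; the equivalence of Fr\'echet structures, Theorem~\ref{thm:topology}, moreover upgrades this to a topological isomorphism. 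Since $\mathcal{K}_X$ is coherent and $\Omega$ is cohomologically $q$-complete, the right-hand side vanishes for all $r\geq q$, whence $H^{n,r}_{w,loc}(\Omega,F)=0$ for $r\geq q$.

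Next I would invoke Serre duality, Theorem~\ref{thm:main1}, with $(p,q)$ replaced by $(n,r)$. Its hypotheses require that $H^{n,r}_{w,loc}(\Omega,F)$ and $H^{n,r+1}_{w,loc}(\Omega,F)$ be Hausdorff; both are Hausdorff by Theorem~\ref{thm:main2} (indeed they even vanish for $r\geq q$). The duality then exhibits $H^{0,n-r}_{s,cpt}(\Omega,F^*)$ as the topological dual of $H^{n,r}_{w,loc}(\Omega,F)$, and since the dual of the zero space is zero, $H^{0,n-r}_{s,cpt}(\Omega,F^*)=0$ for $r\geq q$. This settles the first pair of identities.

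For the isolated-singularity case I would run the same two steps with the roles of $\dq_w$ and $\dq_s$ interchanged. Here the restriction to isolated singularities is essential: it is precisely the assumption under which \eqref{eq:intro2} is exact for $p=n$, so that $\mathcal{F}^{n,*}(F)$ with differential $\dq_s$ becomes a fine resolution of the coherent sheaf $\mathcal{K}_X^s\otimes\OO(F)$. The de Rham identification together with cohomological $q$-completeness then yields $H^{n,r}_{s,loc}(\Omega,F)=0$ for $r\geq q$, and the $\{s,w\}$-version of Theorem~\ref{thm:main1}, whose Hausdorffness hypotheses on $H^{n,r}_{s,loc}$ and $H^{n,r+1}_{s,loc}$ are again guaranteed by Theorem~\ref{thm:main2} in the isolated case, dualizes this to $H^{0,n-r}_{w,cpt}(\Omega,F^*)=0$.

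The genuine mathematical content all sits in the prerequisite theorems, so the main obstacle here is organizational rather than analytic: one must keep straight which fine resolution---hence which coherent canonical sheaf, $\mathcal{K}_X$ versus $\mathcal{K}_X^s$---is actually available, and, crucially, verify that the Hausdorffness hypotheses of Serre duality are met \emph{before} dualizing. The latter is exactly the point supplied by Theorem~\ref{thm:main2}, and the fact that the $\{s,w\}$-half is claimed only under the isolated-singularity hypothesis reflects that both the resolution property of \eqref{eq:intro2} and the corresponding Hausdorffness are known only there.
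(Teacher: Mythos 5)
Your proposal is correct and follows essentially the same route as the paper: identify $H^{n,r}_{w,loc}$ (resp.\ $H^{n,r}_{s,loc}$ in the isolated case) with coherent cohomology of $\mathcal{K}_X(F)$ (resp.\ $\mathcal{K}_X^s(F)$) via the fine resolution and Theorem \ref{thm:topology}, conclude vanishing from cohomological $q$-completeness, and dualize via Lemma \ref{lem:range} and Theorem \ref{thm:duality11} (equivalently, Theorem \ref{thm:main1}). One small correction: Theorem \ref{thm:main2} is not applicable here since $\Omega$ is only assumed cohomologically $q$-complete, not holomorphically convex, but your parenthetical observation is the right fix --- the Hausdorffness hypotheses hold trivially because the groups in question already vanish for $r\geq q$.
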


Note that $\Omega$ is cohomologically $q$-complete if it is $q$-complete by the Andreotti-Grauert vanishing theorem \cite{AG}.
So, Theorem \ref{thm:vanishing1} allows to solve the $\dq_s$-equation with compact support for $(0,n-q)$-forms on $q$-complete spaces,
which is of particular interest for $1$-complete spaces, i.e., Stein spaces.

It is thus interesting to understand the $\dq_s$-operator better.
In contrast to the $\dq$-operator in the sense of distributions (the $\dq_w$-operator),
$\dq_s$ comes with a certain kind of boundary (respectively growth) condition at the singular set.
It follows by an argument of N. Sibony from \cite{Sibony}
that locally bounded forms 
in the domain of the $\dq_w$-operator are also in the domain of the $\dq_s$-operator (Theorem \ref{thm:domaindqs2}).
This is a direct way to see that weakly holomorphic functions are $\dq_s$-closed (cf. Corollary \ref{cor:main1}).

\smallskip
To exemplify the use of $\dq$-equations on singular spaces,
we give as an application a short proof of the Hartogs' extension theorem in its most general form:

\begin{thm}\label{thm:hartogs}
Let $X$ be a connected normal complex space of dimension $n\geq 2$ which is cohomologically $(n-1)$-complete.
Furthermore, let $D$ be a domain in $X$ and $K\subset D$ a compact subset such that $D\setminus K$ is connected.
Then each holomorphic function $f\in \mathcal{O}(D\setminus K)$ has a unique holomorphic extension to the whole set $D$.
\end{thm}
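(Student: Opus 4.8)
The plan is to imitate the classical $\dq$-proof of Hartogs' extension, but to replace the smooth $\dq$-calculus by the $L^2$-$\dq_s$-theory developed above. First I would fix a cut-off function $\chi\in C^\infty_{cpt}(D)$ with $\chi\equiv1$ on a neighborhood of $K$ and set $g:=(1-\chi)f$, extended by zero across $K$. Since $1-\chi$ vanishes near $K$ and $f\in\OO(D\setminus K)$, this $g$ is a well-defined function on all of $D$, smooth on $\Reg X\cap D$, and locally bounded near every singular point (near $K$ it vanishes identically, while elsewhere it is a product of a smooth function with the continuous function $f$). A direct computation on $\Reg X$ gives $\dq g=-(\dq\chi)f=:\alpha$, and $\alpha$ is a $\dq$-closed $(0,1)$-form whose support is contained in the compact set $\supp\dq\chi\subset D\setminus K$.

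Next I would show that $\alpha$ lies in the \emph{strong} domain with compact support. Because $\supp\dq\chi$ is a compact subset of $D\setminus K$ on which $f$ is holomorphic, hence continuous and bounded, the form $\alpha$ is bounded and compactly supported, so $\alpha\in\mathcal{L}^{0,1}_{cpt}(D)$ with $\dq_w\alpha=0$. By the boundedness criterion (Theorem \ref{thm:domaindqs2}), a bounded form in the domain of $\dq_w$ also lies in the domain of $\dq_s$; thus $\alpha\in\mathcal{F}^{0,1}_{cpt}(D)$ and $\dq_s\alpha=0$. Since $D$ is cohomologically $(n-1)$-complete, the first (unconditional) part of Theorem \ref{thm:vanishing1}, applied with completeness level $q=n-1$ and $r=n-1$ so that $n-r=1$, yields $H^{0,1}_{s,cpt}(D)=0$; note that no restriction on the singularities is needed here. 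Hence there is $u\in\mathcal{F}^{0,0}_{cpt}(D)$ with $\dq_s u=\alpha$.

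Then I would set $F:=g-u$ and verify that it is the sought extension. As just observed, $g$ is locally bounded near $\Sing X$ and satisfies $\dq_w g=\alpha$, so again by Theorem \ref{thm:domaindqs2} it lies in the domain of $\dq_s$ with $\dq_s g=\alpha$; therefore $\dq_s F=\alpha-\alpha=0$, i.e. $F\in\ker\dq_s^{0,0}(D)=\widehat{\OO}(D)$ by Theorem \ref{cor:main1}. Since $X$ is normal, $\widehat{\OO}_X=\OO_X$, so $F\in\OO(D)$. Outside the compact set $\supp\chi\cup\supp u$ one has $1-\chi\equiv1$ and $u\equiv0$, whence $F=f$ there; this is a nonempty open subset of $D\setminus K$ because $K\subset\supp\chi$ forces $D\setminus\supp\chi\subset D\setminus K$. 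As $D\setminus K$ is connected, the identity theorem forces $F=f$ on all of $D\setminus K$, so $F$ is a holomorphic extension of $f$; uniqueness follows from the identity theorem on the connected domain $D$.

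The main obstacle I expect is the second step: one must ensure that the compactly supported $\dq$-closed $(0,1)$-form $\alpha$ belongs to the \emph{strong} (minimal) domain $\mathcal{F}^{0,1}_{cpt}$ rather than merely to the weak one, since it is the vanishing of $H^{0,1}_{s,cpt}(D)$ — and not of $H^{0,1}_{w,cpt}(D)$ — that holds for arbitrary normal singularities. This is exactly where the boundedness of $\alpha$, guaranteed by the continuity of $f$ on the compact set $\supp\dq\chi$ even where it meets $\Sing X$, combined with Sibony's argument in Theorem \ref{thm:domaindqs2}, is indispensable; the same point is what allows the resulting $\dq_s$-closed function $F$ to be identified as holomorphic through $\ker\dq_s^{0,0}=\widehat{\OO}_X=\OO_X$.
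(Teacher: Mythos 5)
Your overall strategy is the right one (the Ehrenpreis cut-off trick, boundedness of the correction form plus Theorem \ref{thm:domaindqs2} to land in the strong domain, vanishing of a compactly supported $\dq_s$-cohomology group, then normality to identify the result as holomorphic), and this is essentially the paper's route. But there is one genuine gap at the central step: you apply Theorem \ref{thm:vanishing1} to $\Omega=D$ and assert ``Since $D$ is cohomologically $(n-1)$-complete \dots\ $H^{0,1}_{s,cpt}(D)=0$.'' The hypothesis of the theorem is that $X$, not $D$, is cohomologically $(n-1)$-complete, and an arbitrary domain $D\subset X$ inherits no such property. Already for $X=\C^2$ (which is Stein, hence cohomologically $1$-complete) and $D$ a non-pseudoconvex domain, one has $H^1(D,\mathcal{K}_D)\neq 0$, and by the duality of Theorem \ref{thm:main1} the group $H^{0,1}_{s,cpt}(D)$ does not vanish. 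So the equation $\dq_s u=\alpha$ cannot in general be solved with $u$ compactly supported \emph{in $D$}, and the step that makes your endgame so clean (``outside $\supp\chi\cup\supp u$ one has $F=f$,'' with $\supp u\subset D$) is exactly the step that fails.

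The repair is to solve on $X$ instead: extend $\omega:=\dq g$ by zero to a bounded, compactly supported, $\dq_s$-closed $(0,1)$-form on all of $X$, use $H^{0,1}_{s,cpt}(X)=0$ (Theorem \ref{thm:vanishing1} applied to $\Omega=X$) to get $h\in L^{0,0}_{cpt}(X)$ with $\dq_s h=\omega$, and note that $h$ is then holomorphic on $X^*\setminus\supp\chi$. Since $h$ has compact support and $X$ is non-compact (being $(n-1)$-complete) and locally irreducible by normality, the identity theorem forces $h\equiv 0$ on the non-relatively-compact part of $X\setminus\supp\chi$, and in particular on a nonempty open subset of $D\setminus K$. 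One then sets $F:=(1-\chi)f-h$ on $D$, concludes $F\in\OO(D)$ by normality (your route via $\ker\dq_s^{0,0}=\widehat{\OO}_X=\OO_X$ from Theorem \ref{cor:main1} works, as does the more elementary Riemann extension argument), and uses connectedness of $D\setminus K$ to see $F=f$ there. This extra ``$h$ vanishes somewhere in $D\setminus K$'' argument is unavoidable once the solution is only known to have compact support in $X$ rather than in $D$; the rest of your write-up, including the careful justification that $\omega$ is bounded and hence $\dq_s$-closed, is correct.
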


In almost this generality, i.e., for $(n-1)$-complete spaces,
Hartogs' extension theorem is due to Merker and Porten \cite{MePo2},
who proved also extension of meromorphic functions.
Merker and Porten gave an involved geometrical proof by using a finite number of
parameterized families of holomorphic discs and Morse-theoretical tools for the global topological control 
of monodromy, but no $\dq$-theory.
Shortly after that, Col\c{t}oiu and Ruppenthal obtained a $\dq$-theoretical proof of the slightly more general Theorem \ref{thm:hartogs}
by using the Ehrenpreis-$\dq$-technique
on a resolution of singularities (see \cite{CR}).
It is a natural question to ask whether the extension can be achieved by working on the original singular space only.
In the present paper, we obtain a very short proof of Theorem \ref{thm:hartogs} by the Ehrenpreis-$\dq$-technique.
We just use the $\dq$-vanishing $H^{0,1}_{s,cpt}(X)=0$
for an $(n-1)$-complete space $X$ and the fact that bounded $\dq$-closed forms are in the kernel of $\dq_s$ (see Section \ref{ssec:hartogs}).
A similar proof has been given also by {\O}vrelid and Vassiliadou (\cite{OV4}, Theorem 1.3)
by the use of their weighted $L^2$-solvability results for the $\dq$-equation on singular spaces \cite{OV4}.

We also show that $H^{0,1}_{L^\infty,cpt}(X)=0$ for an $(n-1)$-complete space $X$ 
by using a resolution of singularities and Takegoshi's vanishing theorem (Theorem \ref{thm:vanishing3}).

\smallskip
Let us point out also the following interesting fact.
Let $X$ be a Gorenstein space with canonical singularities.
By exactness of \eqref{eq:exactnessM01} and exactness of \eqref{eq:intro1} for $p=n$,
the non-degenerate $L^2$-Serre duality pairing
$$H^{0,q}_{s,loc}(\Omega) \times H^{n,n-q}_{w,cpt}(\Omega) \rightarrow \C\ ,\ ([\eta],[\omega]) \mapsto \int_{\Omega^*} \eta\wedge \omega,$$
is for $0\leq q\leq n$ then an explicit realization of Grothendieck duality after Ramis-Ruget \cite{RR},
$$H^q(\Omega,\OO_X) \cong H^{n-q}_{cpt}(\Omega,\omega_X),$$
given the cohomology groups under consideration are Hausdorff.
Here, $\omega_X$ denotes the Grothendieck dualizing sheaf which coincides with the Grauert-Riemenschneider canonical sheaf $\mathcal{K}_X$
as $X$ has canonical Gorenstein singularities.

\medskip
The present paper is organized as follows. In Section \ref{sec:preliminaries} we provide the necessary preliminaries:
the $\dq_w$- and the $\dq_s$-complex, some $L^2$-$\dq$-results and consequences, 
$L^2$-$\dq$-Hilbert space theory, topological preliminaries,
Fr\'echet sheaves. In Section \ref{sec:serre}, we prove Serre duality, Theorem \ref{thm:main1}, 
and study the equivalence of the topology of compact convergence and $L^2$-topology which leads to Theorem \ref{thm:main2}
and Theorem \ref{thm:vanishing1}. Section \ref{sec:dqs} is then devoted to the study of the $\dq_s$-operator and 
Hartogs' extension theorem.
Finally, we prove Theorem \ref{thm:main3a} and its corollaries
in the last section.

\newpage
\section{Preliminaries}\label{sec:preliminaries}

\subsection{Two $\dq$-complexes on singular spaces}\label{ssec:complexes}

Let us recall some of the essential constructions from \cite{eins}.

Let $X$ be a (singular) Hermitian complex space of pure dimension $n$.
For any subset $S\subset X$, we use the notation $S^*$ for $S\setminus \Sing X$.
Let $F\rightarrow X^*$ be a Hermitian holomorphic line bundle and $U\subset X$ an open subset.
On a singular space, it is fruitful
to consider forms that are square-integrable up to the singular set.
Hence, we use the following concept of locally square-integrable forms:
\begin{eqnarray*}
L_{loc}^{p,q}(U,F):=\{f \in L_{loc}^{p,q}(U^*,F): f|_K \in L^{p,q}(K^*,F)\ \forall K\subset\subset U\}.
\end{eqnarray*}
It is easy to check that the presheaves given as
$$\mathcal{L}^{p,q}(U,F) := L_{loc}^{p,q}(U,F)$$
are already sheaves $\mathcal{L}^{p,q}(F)\rightarrow X$. On $L_{loc}^{p,q}(U,F)$, we denote by
$$\dq_w(U): L_{loc}^{p,q}(U,F) \rightarrow L_{loc}^{p,q+1}(U,F)$$
the $\dq$-operator in the sense of distributions on $U^*=U\setminus\Sing X$ which is closed and densely defined.
When there is no danger of confusion, we will simply write $\dq_w$ for $\dq_w(U)$.
The subscript refers to $\dq_w$ as an operator in a weak sense.
Since $\dq_w$ is a local operator, i.e. $\dq_w(U)|_V = \dq_w(V)$
for open sets $V\subset U$,
we can define the presheaves of germs of forms in the domain of $\dq_w$,
$$\mathcal{C}^{p,q}(F):=\mathcal{L}^{p,q}(F)\cap \dq_w^{-1}\mathcal{L}^{p,q+1}(F),$$
given by
$$\mathcal{C}^{p,q}(U,F) = \mathcal{L}^{p,q}(U,F) \cap\Dom\dq_w(U).$$
These are actually already sheaves
because the following is also clear: If $U=\bigcup U_\mu$ is a union of open sets, $f_\mu=f|_{U_\mu}$ and
$$f_\mu \in \Dom \dq_w(U_\mu),$$
then
$$f\in \Dom \dq_w(U)\ \ \  \mbox{ and }\ \ \  \big(\dq_w(U) f\big)|_{U_\mu} = \dq_w(U_\mu) f_\mu.$$
Moreover, it is easy to see that the sheaves $\mathcal{C}^{p,q}(F)$ admit partitions of unity,
and so we obtain a complex of fine sheaves
\begin{eqnarray}\label{eq:Cseq1}
\mathcal{C}^{p,0}(F) \overset{\dq_w}{\longrightarrow} \mathcal{C}^{p,1}(F) \overset{\dq_w}{\longrightarrow} 
\mathcal{C}^{p,2}(F) \overset{\dq_w}{\longrightarrow} ...
\end{eqnarray}
We use simply $\mathcal{C}^{p,q}$ to denote the sheaves of forms with values in the trivial line bundle.
We define
\begin{eqnarray}\label{defn:KX}
\mathcal{K}_X(F) := \ker \dq_w \subset \mathcal{C}^{n,0}(F).
\end{eqnarray}
Using a resolution of singularities, one sees that $\mathcal{K}_X :=\ker\dq_w\subset \mathcal{C}^{n,0}$
is just the canonical sheaf of Grauert and Riemenschneider because the $L^2$-property of $(n,0)$-forms
remains invariant under modifications (see \cite{eins}, Section 2.2).

The $L^{2,loc}$-Dolbeault cohomology for forms with values in $F$ with respect to the $\dq_w$-operator 
on an open set $U\subset X$
is by definition the cohomology of the complex \eqref{eq:Cseq1} which is denoted by $H^q(\Gamma(U,\mathcal{C}^{p,*}(F)))$.
The cohomology with compact support is $H^q(\Gamma_{cpt}(U,\mathcal{C}^{p,*}(F)))$. Note that this is the cohomology
of forms with compact support in $U$, not with compact support in $U^*=U\setminus\Sing X$.

\medskip
We use also the following notation for the $\dq_w$-cohomology:

\begin{defn}\label{defn:Hw}
For an open set $\Omega\subset X$ and a Hermitian holomorphic line bundle $F\rightarrow X$, let
\begin{eqnarray*}
H^{p,q}_{w,loc}(\Omega,F) &:=& H^q\big(\Gamma(\Omega,\mathcal{C}^{p,*}(F))\big),\\
H^{p,q}_{w,cpt}(\Omega,F) &:=& H^q\big(\Gamma_{cpt}(\Omega,\mathcal{C}^{p,*}(F))\big).
\end{eqnarray*}
\end{defn}

Secondly, we introduce a suitable local realization of a minimal version of the $\dq$-operator.
This is the $\dq$-operator with a certain boundary condition at the singular set $\Sing X$ of $X$.
Let
$$\dq_s(U): L_{loc}^{p,q}(U,F) \rightarrow L_{loc}^{p,q+1}(U,F)$$
be defined as follows.\footnote{Again, we write simply $\dq_s$ for $\dq_s(U)$ 
if there is no danger of confusion.} We say that $f\in\Dom\dq_w$ is in the domain of $\dq_s$ if there exists a
sequence of forms $\{f_j\}_j \subset \Dom\dq_w \subset L_{loc}^{p,q}(U,F)$ with essential support away from the singular set,
$$\supp f_j \cap \Sing X = \emptyset,$$
such that
\begin{eqnarray}\label{eq:ds1}
f_j \rightarrow f &\mbox{ in }& L^{p,q}(K^*,F),\\
\dq_w f_j \rightarrow \dq_w f &\mbox{ in }& L^{p,q+1}(K^*,F)\label{eq:ds2}
\end{eqnarray}
for each compact subset $K\subset\subset U$. The subscript refers to $\dq_s$ as an extension in a strong sense.
Note that we can assume without loss of generality
(by use of cut-off functions and smoothing with Dirac sequences) that the forms $f_j$ are smooth with compact support in $U^*=U-\Sing X$.
This is the equivalent definition that we used in \cite{eins}
where we denoted the operator by $\dq_{s,loc}$.

It is now clear that $\dq_s(U)|_V = \dq_s(V)$
for open sets $V\subset U$, and we can define the presheaves of germs of forms in the domain of $\dq_s$,
$$\mathcal{F}^{p,q}(F):=\mathcal{L}^{p,q}(F)\cap \dq_s^{-1}\mathcal{L}^{p,q+1}(F),$$
given by 
$$\mathcal{F}^{p,q}(U,F) = \mathcal{L}^{p,q}(U,F) \cap \Dom \dq_s(U).$$
Here, we shall check a bit more carefully that these are already sheaves:
Let $U=\bigcup U_\mu$ 
be a union of open sets, $f\in L_{loc}^{p,q}(U,F)$ and $f_\mu=f|_{U_\mu} \in \Dom\dq_s(U_\mu)$
for all $\mu$. We claim that $f\in\Dom\dq_s(U)$.
To see this, we can assume (by taking a refinement if necessary)
that the open cover $\mathcal{U}:=\{U_\mu\}_\mu$ is locally finite,
and choose a partition of unity $\{\varphi_\mu\}_\mu$ for $\mathcal{U}$.
On $U_\mu$ choose a sequence $\{f^\mu_j\}_j\subset L_{loc}^{p,q}(U_\mu,F)$ as in \eqref{eq:ds1}, \eqref{eq:ds2},
and consider
$f_j := \sum_{\mu} \varphi_\mu f_j^\mu$.
It is clear that $\{f_j\}_j\subset L_{loc}^{p,q}(U,F)$. If $K\subset\subset U$ is compact, then $K\cap \supp \varphi_\mu$
is a compact subset of $U_\mu$ for each $\mu$, so that $\{f_j^\mu\}_j$ and $\{\dq f_j^\mu\}_j$ converge in the $L^2$-sense
to $f_\mu$ resp. $\dq_w f_\mu$ on $K\cap\supp \varphi_\mu$. But then $\{f_j\}_j$ and $\{\dq f_j\}_j$ converge in the $L^2$-sense
to $f$ resp. $\dq_w f$ on $K$ (recall that the cover is locally finite) and that is what we had to show.

As for $\mathcal{C}^{p,q}(F)$, it is clear that the sheaves $\mathcal{F}^{p,q}(F)$ are fine,
and we obtain a complex of fine sheaves
\begin{eqnarray}\label{eq:Fseq1}
\mathcal{F}^{p,0}(F) \overset{\dq_s}{\longrightarrow} \mathcal{F}^{p,1}(F) \overset{\dq_s}{\longrightarrow} \mathcal{F}^{p,2}(F) \overset{\dq_s}{\longrightarrow} ...
\end{eqnarray}
Again, we use simply $\mathcal{F}^{p,q}$ to denote the sheaves of forms with values in the trivial line bundle.
We define the canonical sheaf of holomorphic $n$-forms with a Dirichlet boundary condition:
\begin{eqnarray*}
\mathcal{K}_X^s (F):= \ker \dq_s \subset \mathcal{F}^{n,0}(F).
\end{eqnarray*}

\begin{defn}\label{defn:Hs}
For $\Omega\subset X$ open, we use the notation:
\begin{eqnarray*}
H^{p,q}_{s,loc}(\Omega,F) &:=& H^q\big(\Gamma(\Omega,\mathcal{F}^{p,*}(F))\big),\\
H^{p,q}_{s,cpt}(\Omega,F) &:=& H^q\big(\Gamma_{cpt}(\Omega,\mathcal{F}^{p,*}(F))\big).
\end{eqnarray*}
\end{defn}

\smallskip
\subsection{Local $L^2$-solvability for $(n,q)$-forms}\label{ssec:solvability}

It is clearly interesting to study whether the sequences \eqref{eq:Cseq1} and \eqref{eq:Fseq1} are exact,
which is well-known to be the case in regular points of $X$ where the $\dq_w$- and the $\dq_s$-operator coincide.
In singular points, the situation is quite complicated for forms of arbitrary degree and not completely understood.
However, the $\dq_w$-equation is locally solvable in the $L^2$-sense at arbitrary singularities for forms
of degree $(n,q)$, $q>0$ (\cite{PS1}, Proposition 2.1), and for forms of degree $(p,q)$, $p+q>n$, 
at isolated singularities (\cite{FOV2}, Theorem 1.2).
We may restrict ourselves here to the case of $(n,q)$-forms and have (see \cite{eins}, Theorem 3.1):

\begin{thm}\label{thm:exactness1}
Let $X$ be a Hermitian complex space of pure dimension $n$,
and $F\rightarrow X^*=X\setminus \Sing X$ a Hermitian holomorphic line bundle which is locally semi-positive with respect to $X$,
i.e. for each point $x\in X$ there is a neighborhood $U_x\subset X$ such that $F$ is semi-positive on $U_x^*=U_x\setminus \Sing X$.
Then the complex
\begin{eqnarray}\label{eq:complex1}
0\rightarrow \mathcal{K}_X(F) \longrightarrow \mathcal{C}^{n,0}(F) \overset{\dq_w}{\longrightarrow}
\mathcal{C}^{n,1}(F) \overset{\dq_w}{\longrightarrow} \mathcal{C}^{n,2}(F) \overset{\dq_w}{\longrightarrow} ...
\end{eqnarray}
is exact, i.e. it is a fine resolution of $\mathcal{K}_X(F)$. For an open set $U\subset X$, it follows that
$$H^q(U,\mathcal{K}_X(F)) \cong H^{n,q}_{w,loc}(U,F)\ \ ,\ \ H^q_{cpt}(U,\mathcal{K}_X(F)) \cong H^{n,q}_{w,cpt}(U,F).$$
\end{thm}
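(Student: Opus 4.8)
The assertion has two parts: exactness of \eqref{eq:complex1} as a complex of sheaves, and the cohomological consequence. Since it was already shown that each $\mathcal{C}^{n,q}(F)$ is a fine sheaf, the second part is formal once the first is proved, so the plan is to establish exactness and then invoke the abstract de Rham theorem. Exactness is a stalkwise statement. At $\mathcal{C}^{n,0}(F)$ it holds by the very definition \eqref{defn:KX} of $\mathcal{K}_X(F)=\ker\dq_w$. For $q\geq1$, exactness of the stalk at $x$ says precisely that every germ $f\in\mathcal{C}^{n,q}(F)$ at $x$ with $\dq_w f=0$ can be written $f=\dq_w g$ with $g\in\mathcal{C}^{n,q-1}(F)$, i.e. that the $\dq_w$-equation is locally solvable in the $L^2$-sense for $\dq_w$-closed $(n,q)$-forms. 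At a regular point $x\in X^*$ this is the classical $L^2$ Dolbeault--Grothendieck lemma on a small ball, so only singular points $x\in\Sing X$ require genuine work; there I would pass to a pseudoconvex (Stein) neighbourhood $U$ embedded in some $\C^N$ and solve the equation on $U^*=U\setminus\Sing X$ by means of the local solvability result cited above (\cite{PS1}, Proposition 2.1, resp. its $F$-twisted form).

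The heart of the matter, and the step I expect to be the main obstacle, is exactly this local solvability at an arbitrary singular point, which is where the semi-positivity of $F$ enters. The natural route passes to a resolution $\pi\colon M\to X$ over $U$: since $\pi$ is biholomorphic over $X^*$ and collapses the exceptional set, the pullback metric $\pi^*g$ is dominated by a smooth Hermitian metric $\hat g$ on $M$, and for forms of top holomorphic degree $(n,\bullet)$ the $L^2$-norm transforms in a controlled way under this comparison. One then pulls $f$ back to an $L^2$ form on the smooth manifold $M$, solves $\dq u=\pi^*f$ there with $L^2$-estimates, and descends the solution to $X^*$. Semi-positivity is the curvature hypothesis that makes Hörmander's estimates on $M$ run — equivalently, it yields Takegoshi's relative vanishing $R^q\pi_*(\mathcal{K}_M\otimes\pi^*F)=0$ for $q\geq1$. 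The delicate analytic point is that the comparison between $\pi^*g$ and $\hat g$ degenerates along the exceptional set, so that descending the constructed solution to an honest $L^2$-form on $(X^*,g)$ is immediate only for $q=1$ (where the relevant form is of degree $(n,0)$ and hence has modification-invariant $L^2$-norm) and requires the full strength of the cited estimates for higher $q$.

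Once stalk exactness is established, \eqref{eq:complex1} is a fine resolution of $\mathcal{K}_X(F)$. As $X$ is paracompact and locally compact, the fine sheaves $\mathcal{C}^{n,q}(F)$ are acyclic both for the functor $\Gamma(U,\cdot)$ and for $\Gamma_{cpt}(U,\cdot)$; the abstract de Rham theorem therefore identifies the cohomology of the complex of (compactly supported) global sections with the corresponding sheaf cohomology. This yields $H^q(U,\mathcal{K}_X(F))\cong H^{n,q}_{w,loc}(U,F)$ and $H^q_{cpt}(U,\mathcal{K}_X(F))\cong H^{n,q}_{w,cpt}(U,F)$, completing the proof.
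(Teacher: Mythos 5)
Your proposal is correct and takes essentially the same approach as the paper, which offers no independent argument for Theorem \ref{thm:exactness1} but reduces it to the known local $L^2$-solvability of $\dq_w$ for $(n,q)$-forms at arbitrary singularities (citing \cite{PS1}, Proposition 2.1, and \cite{eins}, Theorem 3.1, where semi-positivity of $F$ feeds into Takegoshi's relative vanishing) and then applies the abstract de Rham theorem to the resulting fine resolution. The one step you rightly flag as delicate -- descending the solution from the resolution to a form that is $L^2$ with respect to the degenerate metric $\pi^*g$ when its degree is $(n,q-1)$ with $q-1\geq 1$, where the metric comparison for $(n,\bullet)$-forms runs the wrong way -- is precisely the content of those citations, so your sketch and the paper's proof bottom out in the same external results.
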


Note that the positivity assumption on $F$ is trivially fulfilled if $F$ extends to a holomorphic line bundle over $X$.
For the case of the trivial line bundle, $F=X\times\C$, Theorem \ref{thm:exactness1} is due to Pardon-Stern (\cite{PS1}, Proposition 2.1).

Concerning the $\dq_s$-equation, local $L^2$-solvability for forms of degree $(n,q)$
is known to hold on spaces with isolated singularities (see \cite{eins}, Lemma 5.4 and Lemma 6.3),
but the problem is open at arbitrary singularities.

So, let $X$ have only isolated singularities. 
Then the $\dq_s$-equation is locally exact on $(n,q)$-forms
for $1\leq q \leq n-1$ by \cite{eins}, Lemma 5.4, and for $q\geq 2$ by \cite{eins}, Lemma 6.3.
Both statements were deduced from the results of Forn{\ae}ss, {\O}vrelid and Vassiliadou \cite{FOV2}.
The case of $\dim X=n=1$ is treated in \cite{RS}. Hence:

\begin{thm}\label{thm:exactness2}
Let $X$ be a Hermitian complex space of pure dimension $n$ with only isolated singularities. Then
\begin{eqnarray}\label{eq:exactness2}
0\rightarrow \mathcal{K}_X^s \hookrightarrow \mathcal{F}^{n,0} \overset{\dq_s}{\longrightarrow}
\mathcal{F}^{n,1} \overset{\dq_s}{\longrightarrow} \mathcal{F}^{n,2} \overset{\dq_s}{\longrightarrow} ... \longrightarrow \mathcal{F}^{n,n}
\rightarrow 0
\end{eqnarray}
is a fine resolution.
For an open set $U\subset X$, it follows that
$$H^q(U,\mathcal{K}_X^s) \cong H^{n,q}_{s,loc}(U)\ \ ,\ \ H^q_{cpt}(U,\mathcal{K}_X^s) \cong H^{n,q}_{s,cpt}(U).$$
\end{thm}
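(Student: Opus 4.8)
The plan is to show that \eqref{eq:exactness2} is a fine resolution of $\mathcal{K}_X^s$ and then to invoke the abstract de Rham theorem to read off the cohomology isomorphisms. The sheaves $\mathcal{F}^{n,q}$ were already shown to be fine in Section \ref{ssec:complexes}, so the only substantial point is exactness of the complex of sheaves, which is a purely local statement at the level of stalks. Exactness at $\mathcal{F}^{n,0}$ is immediate from the definition $\mathcal{K}_X^s = \ker \dq_s$. For the remaining degrees it suffices to show that every germ $f \in \mathcal{F}^{n,q}_x$ with $\dq_s f = 0$ admits, on a possibly smaller neighborhood of $x$, a germ $g \in \mathcal{F}^{n,q-1}_x$ with $\dq_s g = f$.

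At a regular point $x\in X^*$ the operators $\dq_s$ and $\dq_w$ coincide with the usual $\dq$-operator, and local solvability is just the Dolbeault--Grothendieck lemma. Since $X$ has only isolated singularities, it remains to treat the finitely many singular points. Here I would invoke the local $L^2$-$\dq_s$-solvability results recalled just above: for $1 \le q \le n-1$ the solvability is provided by \cite{eins}, Lemma 5.4, and for $q \ge 2$ --- in particular for the top degree $q = n$, which gives surjectivity of $\dq_s \colon \mathcal{F}^{n,n-1} \to \mathcal{F}^{n,n}$ --- by \cite{eins}, Lemma 6.3, while the one-dimensional case $n=1$ is covered by \cite{RS}. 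Together these exhaust all degrees $q \ge 1$ (note also that $\mathcal{F}^{n,q}=0$ for $q>n$ by degree reasons, so the complex terminates at $\mathcal{F}^{n,n}$), so the complex is exact and \eqref{eq:exactness2} is a fine resolution of $\mathcal{K}_X^s$.

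Having a fine, hence soft, resolution of $\mathcal{K}_X^s$, the cohomology isomorphisms follow from the standard abstract de Rham argument: a fine resolution computes sheaf cohomology, giving $H^q(U, \mathcal{K}_X^s) \cong H^q(\Gamma(U, \mathcal{F}^{n,*})) = H^{n,q}_{s,loc}(U)$. For the compactly supported version I would use that fine sheaves are soft and that soft sheaves are acyclic for the functor $\Gamma_{cpt}$ on the locally compact space $X$, which yields $H^q_{cpt}(U, \mathcal{K}_X^s) \cong H^q(\Gamma_{cpt}(U, \mathcal{F}^{n,*})) = H^{n,q}_{s,cpt}(U)$.

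I expect the main obstacle to be the local $\dq_s$-solvability at the isolated singularities, since --- in contrast to the $\dq_w$-case --- the solution $g$ must itself satisfy the boundary (growth) condition encoded in membership in $\Dom \dq_s$; this delicate point is precisely what is established in the cited lemmas of \cite{eins} (via \cite{FOV2}) and \cite{RS}, and it is the reason the isolated-singularities hypothesis cannot be dropped at this stage.
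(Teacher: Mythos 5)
Your proposal is correct and follows essentially the same route as the paper: the paper likewise reduces everything to the local $\dq_s$-solvability for $(n,q)$-forms at the isolated singular points, quoting Lemma 5.4 of \cite{eins} for $1\leq q\leq n-1$, Lemma 6.3 of \cite{eins} for $q\geq 2$, and \cite{RS} for $n=1$, with the Dolbeault--Grothendieck lemma handling regular points, and then reads off both cohomology isomorphisms from the resulting fine resolution via the abstract de Rham theorem. Your closing remark correctly identifies the one genuinely delicate point (that the local solution must itself lie in $\Dom\dq_s$), which is exactly what the cited lemmas supply.
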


\medskip
\subsection{Resolution of singularities and Takegoshi's vanishing theorem}\label{ssec:resolution}

We need to recall some more material from \cite{eins}, Section 2.2.
For $X$ as above and $F\rightarrow X$ a Hermitian holomorphic line bundle,
let $\pi: M \rightarrow X$ be a resolution of singularities (which exists due to Hironaka),
and give $M$ an arbitrary positive definite Hermitian metric $\sigma$.
Then we denote analogously to \eqref{eq:Cseq1} by
\begin{eqnarray}\label{eq:new1}
0\rightarrow \mathcal{K}_M(\pi^*F) \longrightarrow \mathcal{C}^{n,0}_\sigma(\pi^* F) \overset{\dq_w}{\longrightarrow}
\mathcal{C}^{n,1}_\sigma(\pi^* F) \overset{\dq_w}{\longrightarrow} \mathcal{C}^{n,2}_\sigma(\pi^* F) \overset{\dq_w}{\longrightarrow} ...
\end{eqnarray}
the fine $L^{2,loc}_\sigma$-resolution of the canonical sheaf $\mathcal{K}_M(\pi^* F)$ on $M$ 
with values in $\pi^* F$ (it is well known that \eqref{eq:new1} is exact).
As $\mathcal{K}_X$ is the Grauert-Riemenschneider canonical sheaf on $X$, we have (see \cite{eins}, Theorem 2.1):
\begin{eqnarray}\label{eq:new2}
\mathcal{K}_X(F) = \pi_* \mathcal{K}_M(\pi^* F).
\end{eqnarray}
Takegoshi's vanishing theorem \cite{Ta} yields the vanishing of the higher direct image sheaves:
\begin{eqnarray}\label{eq:new3}
R^q \pi_* \mathcal{K}_M(\pi^* F) = 0\ ,\ q>0.
\end{eqnarray}
Moreover, square-integrable $(n,q)$-forms remain square-integrable under pull-back by $\pi$.
This pull-back commutes with the $\dq_w$-operator and it is continuous by \cite{eins}, (13).
The exceptional set of the resolution $\pi: M\rightarrow X$
does no harm as the $\dq$-equation in the $L^2$-sense extends over hypersurfaces.
So, $\pi$ induces a natural (continuous) mapping of complexes
\begin{eqnarray}\label{eq:new4}
\pi^*: \big(\mathcal{C}^{n,*}(F),\dq_w\big) \longrightarrow \big(\pi_* \mathcal{C}^{n,*}_\sigma(\pi^* F),\pi_* \dq_w\big),
\end{eqnarray}
and by Theorem \ref{thm:exactness1} and \eqref{eq:new3}, both complexes in \eqref{eq:new4}
are fine resolutions of $\mathcal{K}_X(F)=\pi_* \mathcal{K}_M(\pi^* F)$.
As $\pi^*$ commutes with the $\dq_w$-operator, it induces isomorphisms
\begin{eqnarray}\label{eq:new11}
H^{n,q}_{w,loc}\big(\Omega,F\big) &\overset{\cong}{\longrightarrow}& H^{n,q}_{w,loc}\big(\pi^{-1}(\Omega),\pi^* F\big) 
\cong H^q\big(\pi^{-1}(\Omega),\mathcal{K}_M(\pi^* F)\big),\\\label{eq:new12}
H^{n,q}_{w,cpt}\big(\Omega,F\big) &\overset{\cong}{\longrightarrow}& H^{n,q}_{w,cpt}\big(\pi^{-1}(\Omega),\pi^* F\big)
\cong H^q_{cpt}\big(\pi^{-1}(\Omega),\mathcal{K}_M(\pi^* F)\big)
\end{eqnarray}
for any open set $\Omega\subset X$ and $0\leq q\leq n$ (see \cite{eins}, Theorem 2.1).

\bigskip
For the rest of this section, assume that $X$ has only homogeneous isolated singularities.
In this situation, we have $\mathcal{K}^s_X \cong \mathcal{K}_X$
by \cite{eins}, Theorem 1.10 (with $D=\emptyset$), because homogeneous isolated singularities can be resolved by a single blow-up.

As the $\dq_s$-operator is stronger than the $\dq_w$-operator,
$\pi$ induces also a natural (continuous) mapping of complexes
\begin{eqnarray}\label{eq:new4b}
\pi^*: \big(\mathcal{F}^{n,*}(F),\dq_s\big) \longrightarrow \big(\pi_* \mathcal{C}^{n,*}_\sigma(\pi^* F),\pi_* \dq_w\big).
\end{eqnarray}
By Theorem \ref{thm:exactness2} and \eqref{eq:new3}, both complexes in \eqref{eq:new4b}
are fine resolutions of $\mathcal{K}^s_X(F)=\pi_* \mathcal{K}_M(\pi^* F)$
and so $\pi^*$ induces also isomorphisms
\begin{eqnarray}\label{eq:new11b}
H^{n,q}_{s,loc}\big(\Omega,F\big) &\overset{\cong}{\longrightarrow}& H^{n,q}_{w,loc}\big(\pi^{-1}(\Omega),\pi^* F\big) 
\cong H^q\big(\pi^{-1}(\Omega),\mathcal{K}_M(\pi^* F)\big),\\\label{eq:new12b}
H^{n,q}_{s,cpt}\big(\Omega,F\big) &\overset{\cong}{\longrightarrow}& H^{n,q}_{w,cpt}\big(\pi^{-1}(\Omega),\pi^* F\big)
\cong H^q_{cpt}\big(\pi^{-1}(\Omega),\mathcal{K}_M(\pi^* F)\big)
\end{eqnarray}
for any open set $\Omega\subset X$ and $0\leq q\leq n$.

\bigskip
\subsection{Metrizable topology of $L^{p,q}_{loc}(\Omega,F)$}

Let $\Omega\subset X$ be an open subset. We give $L^{p,q}_{loc}(\Omega,F)$ the structure of a Fr\'echet space
with the topology of $L^2$-convergence on compact subsets. This topology is obtained as follows.
Let $K_1 \subset K_2 \subset K_3 \subset ... \subset \Omega$ be a compact exhaustion of $\Omega$,
and define the separating family of seminorms
\begin{eqnarray}\label{eq:seminorms}
p_j(\eta) := \left( \int_{K_j^*} |\eta|_F^2 dV_X \right)^{1/2}
\end{eqnarray}
for $\eta\in L_{loc}^{p,q}(\Omega,F)$ and  $j=1, 2, ...$, where $dV_X$ is the volume form on $X^*$ induced by the Hermitian metric
of the Hermitian space $X$. $L_{loc}^{p,q}(\Omega,F)$ is then a Fr\'echet space with the metric
$$d(\eta,\omega) := \sum_{j=1}^\infty 2^{-j} \frac{p_j(\eta-\omega)}{1+p_j(\eta-\omega)}\ \ ,\ \eta,\omega\in L_{loc}^{p,q}(\Omega,F)$$
(compare e.g. \cite{Rd}, Theorem 1.37, Remark 1.38 and Example 1.44).
The induced topology is also called the topology of compact $L^2$-convergence.
It is not hard to see that this topology does not depend on the compact exhaustion.

\medskip
\subsection{Dual space of $L_{loc}^{p,q}(\Omega,F)$}

Let us consider the vector space of compactly supported $L^2$-forms with values in the (Hermitian) dual bundle $F^*$,
\begin{eqnarray*}
L_{cpt}^{r,s}(\Omega,F^*):= \{ \eta\in L^{r,s}_{loc}(\Omega,F^*): \supp \eta \subset\subset \Omega\},
\end{eqnarray*}
which inherits a metric as a subspace of $L_{loc}^{r,s}(\Omega,F^*)$. Unfortunately,
this metric is not complete, i.e. $L_{cpt}^{r,s}(\Omega,F^*)$ is not a closed subspace of $L_{loc}^{r,s}(\Omega,F^*)$.

However, analogously to the Schwartz topology on spaces of test-forms,
we can give $L_{cpt}^{r,s}(\Omega,F^*)$ the structure of a complete locally convex topological vector space as follows.
For a fixed compact set $K\subset \Omega$, let
$$\mathcal{D}_K^{r,s}:=L_{K}^{r,s}(\Omega,F^*):=\{ \eta\in L^{r,s}_{loc}(\Omega,F^*): \supp \eta\subset K\},$$
carrying the induced Fr\'echet space structure, and let $\tau_K$ be the topology on $\mathcal{D}_K^{r,s}$.
Following \cite{Rd}, Definition 6.3, let $\beta$ be the collection of all convex balanced sets $W\subset L_{cpt}^{r,s}(\Omega,F^*)$
such that $\mathcal{D}_K^{r,s}\cap W \in \tau_K$ for every compact set $K\subset \Omega$,
and define the topology $\tau$ on $L_{cpt}^{r,s}(\Omega,F^*)$ as the collection of all unions of sets of the form $\phi + W$
with $\phi\in L_{cpt}^{r,s}(\Omega,F^*)$ and $W\in\beta$. 

This definition means nothing else but that $L_{cpt}^{r,s}(\Omega,F^*)$ is the (topological) inductive limit of the Frech\'et spaces $\mathcal{D}_K^{r,s}$.
So, $L^{r,s}_{cpt}(\Omega,F^*)$ is an (LF)-space in the sense of Dieudonn\'e-Schwartz \cite{DS} (consider e.g. their first example of an (LF)-space).

\begin{thm}\label{thm:rudin}
$\tau$ is a topology, making $L_{cpt}^{r,s}(\Omega,F^*)$ into an (LF)-space.
Particularly, $L_{cpt}^{r,s}(\Omega,F^*)$ is a locally convex topological vector space and every Cauchy sequence converges.
For a sequence of forms $\{\phi_k\}_{k\geq 1} \subset L_{cpt}^{r,s}(\Omega,F^*)$,
we have $\phi_k \rightarrow 0$ in the topology $\tau$ exactly if 
\begin{itemize}
\item there exists a compact set $K\subset \Omega$ such that $\supp \phi_k \subset K$ for all $k\geq 1$, and 
\item $\phi_k\rightarrow 0$ in $L_{K}^{r,s}(\Omega,F^*)$.
\end{itemize}
\end{thm}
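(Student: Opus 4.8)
The plan is to recognize $L_{cpt}^{r,s}(\Omega,F^*)$ as the strict inductive limit of an increasing sequence of Fr\'echet spaces, and then to transcribe the abstract arguments of \cite{Rd}, Chapter~6 (for the topology and the sequential statements) together with \cite{DS} (for the (LF)-structure and completeness). Concretely, I would fix the compact exhaustion $K_1\subset K_2\subset\cdots$ of $\Omega$ and work with the tower $\mathcal{D}_{K_1}^{r,s}\subset\mathcal{D}_{K_2}^{r,s}\subset\cdots$, whose union is $L_{cpt}^{r,s}(\Omega,F^*)$ and whose defining family $\beta$ is exactly the local base of the inductive-limit topology.

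First I would check that each $\mathcal{D}_K^{r,s}=L_K^{r,s}(\Omega,F^*)$ is a Fr\'echet space. The point is that it is a closed subspace of the Fr\'echet space $L_{loc}^{r,s}(\Omega,F^*)$: if forms supported in $K$ converge in compact $L^2$-convergence, the limit again vanishes almost everywhere off $K$, so the support condition passes to the limit. Moreover, on forms supported in $K$ the separating family $\{p_j\}$ collapses, once $K_j\supset K$, to the single $L^2$-norm over $K^*$; hence $\mathcal{D}_K^{r,s}$ is in fact a Hilbert space, a fortiori Fr\'echet. The same remark shows that the system is \emph{strict}: for $K\subset K'$ the topology that $\mathcal{D}_{K'}^{r,s}$ induces on $\mathcal{D}_K^{r,s}$ is the intrinsic $L^2$-topology of $\mathcal{D}_K^{r,s}$, and $\mathcal{D}_K^{r,s}$ is closed in $\mathcal{D}_{K'}^{r,s}$ by the same limit-of-support argument.

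With these verifications the formal part of \cite{Rd}, Theorem~6.4 applies verbatim: $\tau$ is a well-defined locally convex Hausdorff vector topology, $\beta$ is a local base at the origin, and the topology $\tau$ induces on each $\mathcal{D}_K^{r,s}$ coincides with $\tau_K$. From here I would deduce completeness as in \cite{Rd}, Theorem~6.5: a $\tau$-Cauchy sequence is confined to some $\mathcal{D}_K^{r,s}$ and is Cauchy there, hence converges there by completeness of $\mathcal{D}_K^{r,s}$, and this limit is also its $\tau$-limit because $\tau$ restricts to $\tau_K$. The announced convergence criterion is the same statement read off for sequences tending to $0$.

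The main obstacle is the support-stabilization step underlying the last paragraph, namely that any bounded subset --- and hence any $\tau$-convergent or $\tau$-Cauchy sequence --- lies in a single $\mathcal{D}_K^{r,s}$. This is part (c) of \cite{Rd}, Theorem~6.4, and it is proved by contradiction: if the supports were not contained in a common compact set, one extracts forms with support escaping every $K_n$ and assembles, from suitably weighted seminorms, a convex balanced set $W\in\beta$ that fails to absorb the sequence, contradicting its boundedness. Since our $\mathcal{D}_K^{r,s}$ satisfy exactly the structural hypotheses of Rudin's test-function spaces (closed inclusions, strict inductive system, completeness), this construction carries over unchanged; the only input special to the $L^2$-setting is the closedness of the support condition established above.
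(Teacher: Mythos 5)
Your proposal is correct and follows essentially the same route as the paper, which simply cites \cite{DS}, Sections 3--4, and \cite{Rd}, Theorems 6.4 and 6.5, for this statement. Your write-up is a faithful expansion of that citation: you correctly identify the hypotheses that must be verified in the $L^2$-setting (each $\mathcal{D}_K^{r,s}$ is Fr\'echet, in fact Hilbert; the inductive system is strict; each step is closed in the next via the support-passes-to-the-limit argument), after which the abstract Dieudonn\'e--Schwartz/Rudin machinery, including the confinement of bounded and Cauchy sequences to a single $\mathcal{D}_K^{r,s}$, applies as you describe.
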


\begin{proof}
Is contained in \cite{DS}, Section 3 and Section 4.
Alternatively, see also \cite{Rd}, Theorem 6.4 and Theorem 6.5.
\end{proof}


We can now show:

\begin{thm}\label{thm:duality}
Under the non-degenrate pairing
\begin{eqnarray}\label{eq:pairing}
L_{loc}^{p,q}(\Omega,F) \times L_{cpt}^{n-p,n-q}(\Omega,F^*) \rightarrow \C\ ,\ \ (\eta,\omega) \mapsto \int_{\Omega^*} \eta\wedge\omega,
\end{eqnarray}
$L_{cpt}^{n-p,n-q}(\Omega,F^*)$ is algebraically isomorphic to the dual space of $L_{loc}^{p,q}(\Omega,F)$,
and, vice versa, $L_{loc}^{p,q}(\Omega,F)$ is algebraically isomorphic to the dual space of $L_{cpt}^{n-p,n-q}(\Omega,F^*)$.
\end{thm}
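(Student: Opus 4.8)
The plan is to establish the duality in two stages: first the purely algebraic statement that the pairing \eqref{eq:pairing} is non-degenerate and that every continuous linear functional arises this way, and then the topological statement that the induced map onto the dual is a homeomorphism. The underlying point is that $L^{p,q}_{loc}(\Omega,F)$ and $L^{n-p,n-q}_{cpt}(\Omega,F^*)$ are, after fixing a local orthonormal frame of $F$ and the Hermitian volume form $dV_X$, nothing but the standard Fr\'echet space $L^2_{loc}$ of scalar functions on $\Omega^*$ and its compactly-supported $(LF)$-space counterpart $L^2_{cpt}$; the wedge-and-integrate pairing $\int_{\Omega^*}\eta\wedge\omega$ is, up to the sign and the canonical identification of $(p,q)$-forms valued in $F$ with their Hodge-type duals in $(n-p,n-q)$-forms valued in $F^*$, just the honest $L^2$-inner product $\int_{\Omega^*}\langle\cdot,\cdot\rangle\,dV_X$. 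So the first thing I would do is make this identification precise: the fibrewise pairing $\Lambda^{p,q}T^*\otimes F\times\Lambda^{n-p,n-q}T^*\otimes F^*\to\Lambda^{n,n}T^*$ followed by integration is a perfect pairing that recovers the $L^2$ metric, and crucially it is insensitive to the singular set because $\Sing X$ has measure zero in $X$ and everything lives on $\Omega^*$.

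\textbf{Second}, with that reduction in hand, the statement becomes the classical duality between a space of locally-$L^2$ sections and the $(LF)$-space of compactly-supported $L^2$ sections, exactly parallel to the Schwartz duality $\mathcal{E}'=\big(\mathcal{E}\big)'$ replacing smoothness by square-integrability. I would verify continuity of the pairing in each variable separately: for fixed $\omega\in L^{n-p,n-q}_{cpt}$ with $\supp\omega\subset K$, the functional $\eta\mapsto\int\eta\wedge\omega$ is dominated by the seminorm $p_j(\eta)$ for any $j$ with $K\subset K_j$ via Cauchy--Schwarz, so it is continuous on the Fr\'echet space; conversely, for fixed $\eta$, continuity on the $(LF)$-space follows from Theorem \ref{thm:rudin}, since it suffices to test against sequences $\omega_k\to0$ supported in a common compact $K$ and converging in $L^2_K$, on which Cauchy--Schwarz again applies.

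\textbf{Third} comes the surjectivity of the two canonical maps into the respective duals, which is the heart of the matter. That $L^{p,q}_{loc}(\Omega,F)$ is the dual of the $(LF)$-space $L^{n-p,n-q}_{cpt}(\Omega,F^*)$ is the easier direction: a continuous functional on an $(LF)$-space restricts to a continuous functional on each Fr\'echet building block $\mathcal{D}^{r,s}_K=L^2_K$, and a continuous functional on a closed subspace of a Hilbert space of $L^2$-forms is represented by $L^2$-pairing against an element; patching these Riesz representatives over an exhaustion $K_1\subset K_2\subset\cdots$ and using the inductive-limit compatibility produces a single element of $L^{p,q}_{loc}$ representing the functional, unique because the pairing is non-degenerate. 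For the reverse direction, that $L^{n-p,n-q}_{cpt}(\Omega,F^*)$ is the dual of the Fr\'echet space $L^{p,q}_{loc}(\Omega,F)$, I would argue that a continuous functional $T$ on the Fr\'echet space is bounded by some single seminorm $p_N$, i.e. it factors through the Hilbert space $L^2(K_N^*)$; Riesz representation on that Hilbert space yields an $L^2$-form supported on $K_N$, which is exactly an element of $L^{n-p,n-q}_{cpt}$.

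\textbf{The step I expect to be the main obstacle} is not the algebra but pinning down that the topological dual is obtained with the \emph{correct} topology, and in particular verifying the continuity-factorization claim in the last direction cleanly. The subtlety is that a continuous seminorm on a Fr\'echet space is controlled by \emph{finitely many} of the defining seminorms $p_j$, hence by a single $p_N$ (since the $p_j$ are increasing), so the functional really does live on one Hilbert space $L^2(K_N^*)$ rather than on an honest inverse limit; I would need to state this reduction carefully and then invoke Riesz. The matching fact on the other side—that the inductive-limit topology of Theorem \ref{thm:rudin} is precisely the strong dual topology on $L^{p,q}_{loc}(\Omega,F)'$—is what guarantees the two identifications are genuinely mutually inverse homeomorphisms rather than merely algebraic isomorphisms, and this reflexivity between a Fr\'echet space of $L^2$-type and its $(LF)$-type dual is exactly the structure provided by Dieudonn\'e--Schwartz \cite{DS}; I would cite it for the topological half and concentrate the written proof on the algebraic representation argument above.
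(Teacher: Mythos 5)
Your proposal is correct and follows essentially the same route as the paper: for the Fr\'echet side, a continuous functional is dominated by a single seminorm $p_{j_0}$ (since the $p_j$ increase), hence factors through the Hilbert space $L^{p,q}(K_{j_0}^*,F)$ and is given by Riesz representation; for the $(LF)$-side, one restricts to the building blocks $\mathcal{D}_K^{r,s}\cong L^{r,s}(K^*,F^*)$, applies Riesz there, and patches the representatives. The only divergence is emphasis: the paper likewise defers the identification with the \emph{strong} dual topology to a remark after the proof (noting it is not needed elsewhere), so your concern about that point, while legitimate, is handled the same way.
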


\smallskip
By the dual space $V'$ of a topological vector space $V$,
we understand the vector space of continuous linear forms on $V$.
The dual space can carry different topologies (the two most important are the weak and the strong topology),
but we do not need to discuss such issues here as we are concerned in Theorem \ref{thm:duality}
only with algebraic isomorphy.

\smallskip
\begin{proof}
It is clear that the pairing \eqref{eq:pairing} is well-defined and non-degenerate.

\medskip
(I) Let $\omega\in L_{cpt}^{n-p,n-q}(\Omega,F^*)$. Then $\omega$ defines a continuous linear
functional $\Lambda_\omega$ on $L_{loc}^{p,q}(\Omega,F)$ by
\begin{eqnarray*}
\Lambda_\omega: L_{loc}^{p,q}(\Omega,F) \rightarrow \C\ , \ \eta \mapsto \int_{\Omega^*} \eta\wedge\omega.
\end{eqnarray*}
Continuity of $\Lambda_\omega$ is easy to check by use of a separating family of semi-norms.
So, we obtain an injective linear map
\begin{eqnarray*}
\Phi: && L_{cpt}^{n-p,n-q}(\Omega,F^*) \longrightarrow \big( L_{loc}^{p,q}(\Omega,F) \big)'.
\end{eqnarray*}
To show that $\Phi$ is also surjective, let $\Lambda \in \big( L_{loc}^{p,q}(\Omega,F) \big)'$.
We will first prove that $\Lambda$ has compact support.
Recall that the metric of $L_{loc}^{p,q}(\Omega,F)$
is induced by the separating semi-norms $p_j$ in the sense of \cite{Rd}, Theorem 1.37 (see \eqref{eq:seminorms}).
Assign to each $p_j$ and each positive integer $n$ the set
$$V(j,n):= \{ \eta: p_j(\eta)< 1/n \}.$$
Then the collection of all finite intersections of the sets $V(j,n)$ is a convex balanced local base for the topology
of $L_{loc}^{p,q}(\Omega,F)$ (see \cite{Rd}, Theorem 1.37).
As $p_1(\eta) \leq p_2(\eta) \leq p_3(\eta)\leq ...$ in our situation,
already the collection of the sets $V(j,n)$ is a convex local base for the topology.
So, by continuity of $\Lambda$, there exists indices $j_0$ and $n_0$ such that
$$|\Lambda(\eta)| \leq 1 \ \ \mbox{ for all } \eta\in V(j_0,n_0).$$
But then 
$$|\Lambda (\eta)| \leq n_0 p_{j_0}(\eta) = n_0 \left( \int_{K_{j_0}^*} |\eta|^2_F dV_X\right)^{1/2}\ \ \forall \eta \in L_{loc}^{p,q}(\Omega,F).$$
Thus $\Lambda$ must have compact support on $K:=K_{j_0}$, i.e. $\Lambda(\eta) = \Lambda(\eta|_K)$.

On the other hand, by trivial extension, we have a continuous inclusion
$$L^{p,q}(K,F) \subset L_{loc}^{p,q}(\Omega,F),$$
where $L^{p,q}(K,F)$ carries the usual $L^2$-Hilbert space topology
(a sequence converging in $L^{p,q}(K,F)$ is, after trivial extension, also converging in $L^{p,q}_{loc}(\Omega,F)$).
So, $\Lambda$ is also a continuous linear functional on $L^{p,q}(K,F)$
and thus represented on $K^*$ by an $L^2$-form $\omega_K$.
But as $\Lambda$ has support in $K$, this means that $\Lambda$ is represented by $\omega_K$ on all of $\Omega$
by extending $\omega_K$ trivially to $\Omega^*$.

\medskip
(II) Let $\eta\in L_{loc}^{p,q}(\Omega,F)$.
Then it is easy to see that $\eta$ defines a continuous linear functional
\begin{eqnarray*}
\Lambda_\eta: L_{cpt}^{n-p,n-q}(\Omega,F^*) \rightarrow \C\ , \ \omega \mapsto \int_{\Omega^*} \eta\wedge\omega.
\end{eqnarray*}
For continuity, note the following property of an (LF)-space $E$ with defining sequence $\{E_j\}_j$:
a linear mapping from $E$ into a locally convex topological vector space $F$ is continuous if and only if
its restriction to each $E_j$ is continuous (see e.g. \cite{DS}, Proposition 5).
So, we obtain an injective linear mapping
\begin{eqnarray*}
\Psi: && L_{loc}^{p,q}(\Omega,F) \longrightarrow \big( L_{cpt}^{n-p,n-q}(\Omega,F^*) \big)'\ ,\ \eta\mapsto \Lambda_\eta.
\end{eqnarray*}
To show that $\Psi$ is also surjective, consider $\Lambda \in \big( L_{cpt}^{n-p,n-q}(\Omega,F^*) \big)'$.
But, by trivial extension,
$$L^{n-p,n-q}(V^*,F^*) \cong \mathcal{D}^{n-p,n-q}_V = L_{V}^{n-p,n-q}(\Omega,F^*) \subset L_{cpt}^{n-p,n-q}(\Omega,F^*)$$
as topological subspaces for any compact subset $V\subset\subset \Omega$.
So, $\Lambda$ is represented by an $L^2$-form $\eta_V$ on any such set $V$.
But the $\eta_V$ must coincide where their domains intersect,
so that $\Lambda$ is represented by a globally defined form $\eta\in L_{loc}^{p,q}(\Omega,F)$.
\end{proof}

\subsection{$L^2$-Hilbert space duality}\label{ssec:l2serre}

We denote by $*$ the Hodge-$*$-operator on the complex Hermitian manifold $X^*=X\setminus\Sing X$.
It is convenient to work with the conjugate-linear operator 
$$\o{*} \eta := * \o{\eta}.$$

Let
$\tau: F \rightarrow F^*$
be the canonical conjugate-linear bundle isomorphism of $F$ onto its dual bundle.
We can then define the conjugate-linear isomorphism
$$\o{*}_{F}: \Lambda^{p,q} T^*M\otimes F \rightarrow \Lambda^{n-p,n-q} T^*M \otimes F^*$$
by setting $\o{*}_{F}( \eta\otimes e) := \o{*} \eta \otimes \tau(e)$.
This gives the following representation for
the inner product on $(p,q)$-forms with values in $F$ on an open set $\Omega\subset X$:
\begin{eqnarray}\label{eq:ip1}
(\eta,\psi)_{F,\Omega} &=& \int_{\Omega^*} \langle\eta,\psi\rangle_{F} dV_X = \int_{\Omega^*} \eta\wedge \o{*}_{F} \psi,\\
\|\eta\|_{F,\Omega} &=& \sqrt{(\eta,\eta)_{F,\Omega}}.\label{eq:ip2}
\end{eqnarray}
Suppose that $\eta$, $\psi$ are smooth forms with values in $F$ and compact support in $\Omega^*$,
$\eta$ of degree $(p,q-1)$ and $\psi$ of degree $(p,q)$.
Then it is easy to compute by Stokes' Theorem that:
\begin{eqnarray*}
(\dq\eta,\psi)_{F,\Omega} = (-1)^{p+q}\int_{\Omega^*} \eta\wedge\dq\o{*}_{F} \psi
= -\int_{\Omega^*} \eta\wedge \o{*}_{F}\o{*}_{F^*} \dq \o{*}_{F}\psi
= (\eta, -\o{*}_{F^*}\dq \o{*}_{F}\psi)_{F,\Omega}.
\end{eqnarray*}
Thus, we note (cf. e.g. \cite{eins}, Lemma 2.2):

\begin{lem}\label{lem:theta}
The formal adjoint of the $\dq$-operator for forms with values in the Hermitian holomorphic line bundle $F$
with respect to the $\|\cdot\|_{F}$-norm is
\begin{eqnarray}\label{eq:theta}
\theta := - \o{*}_{F^*} \dq \o{*}_{F}.
\end{eqnarray}
\end{lem}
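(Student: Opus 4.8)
The plan is to verify directly the defining property of the formal adjoint, namely that $(\dq\eta,\psi)_{F,\Omega}=(\eta,\theta\psi)_{F,\Omega}$ for all smooth $F$-valued forms $\eta,\psi$ with compact support in the regular part $\Omega^*=\Omega\setminus\Sing X$, where $\eta$ has degree $(p,q-1)$ and $\psi$ degree $(p,q)$. Since the formal adjoint is a local differential-algebraic object and every test form occurring is supported away from $\Sing X$, this reduces the claim to a computation on the smooth Hermitian manifold $X^*$. First I would rewrite the left-hand side by means of the integral representation \eqref{eq:ip1} of the inner product, writing $(\dq\eta,\psi)_{F,\Omega}=\int_{\Omega^*}\dq\eta\wedge\o{*}_F\psi$.

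Next I would integrate by parts. The product $\eta\wedge\o{*}_F\psi$ is a scalar-valued form of bidegree $(n,n-1)$ with compact support in $\Omega^*$ (the pairing of $F$ against $F^*$ cancelling the bundle), so Stokes' theorem gives $\int_{\Omega^*}d(\eta\wedge\o{*}_F\psi)=0$. The crucial point here is bidegree bookkeeping: applying $\partial$ to either factor raises the holomorphic degree above $n$ and therefore contributes nothing, so $d$ may be replaced by $\dq$ throughout. Expanding by the Leibniz rule then yields $\int_{\Omega^*}\dq\eta\wedge\o{*}_F\psi=(-1)^{p+q}\int_{\Omega^*}\eta\wedge\dq\o{*}_F\psi$, which is the first equality of the computation preceding the statement.

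Finally I would reintroduce the inner product by inserting the double conjugate-star operator. The identity $\o{*}_F\o{*}_{F^*}=(-1)^{\deg}\,\mathrm{id}$---which follows from the classical relation $\o{*}\,\o{*}=(-1)^{\deg}$ on the manifold $X^*$ of real dimension $2n$ together with $\tau\circ\tau=\mathrm{id}$ for the canonical conjugate-linear isomorphism $\tau\colon F\to F^*$---allows me to rewrite $\dq\o{*}_F\psi$ as $-(-1)^{p+q}\,\o{*}_F\big(\o{*}_{F^*}\dq\o{*}_F\psi\big)$, turning the right-hand side into $-\int_{\Omega^*}\eta\wedge\o{*}_F\big(\o{*}_{F^*}\dq\o{*}_F\psi\big)=\big(\eta,\,-\o{*}_{F^*}\dq\o{*}_F\psi\big)_{F,\Omega}$, again via \eqref{eq:ip1}. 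Comparison with the adjoint identity then forces $\theta=-\o{*}_{F^*}\dq\o{*}_F$. The step I expect to demand the most care is precisely this sign bookkeeping: fixing the exponent in $\o{*}\,\o{*}=(-1)^{\deg}$ on a complex manifold (where the even real dimension makes the sign depend only on the parity of the form degree) and tracking how the conjugate-linear $\tau$, together with the two insertions of the star, combine so that the residual sign is exactly $-1$. Once the bidegree vanishing and these signs are settled, the remainder of the argument is routine.
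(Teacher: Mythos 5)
Your proposal is correct and follows essentially the same route as the paper: the paper's proof is exactly the displayed computation preceding the lemma, namely Stokes' theorem applied to the compactly supported $(n,n-1)$-form $\eta\wedge\o{*}_F\psi$, the Leibniz-rule sign $(-1)^{p+q}$, and the insertion of $\o{*}_F\o{*}_{F^*}=(-1)^{\deg}\operatorname{id}$ to recover the inner product via \eqref{eq:ip1}. Your sign bookkeeping (including the parity $p+q+1$ of $\dq\o{*}_F\psi$ producing the final minus sign) checks out against the paper's chain of equalities.
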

Let 
$$\dq_{cpt}: A^{p,q}_{cpt}(\Omega^*,F) \rightarrow A^{p,q+1}_{cpt}(\Omega^*,F)$$
be the $\dq$-operator on smooth $F$-valued forms with compact support in $\Omega^*$.
Then we denote by
$$\dq_{max}: L^{p,q}(\Omega^*,F) \rightarrow L^{p,q+1}(\Omega^*,F)$$
the maximal and by
$$\dq_{min}: L^{p,q}(\Omega^*,F) \rightarrow L^{p,q+1}(\Omega^*,F)$$
the minimal closed Hilbert space extension of the operator $\dq_{cpt}$
as densely defined operator from $L^{p,q}(\Omega^*,F)$ to $L^{p,q+1}(\Omega^*,F)$.

For $F$-valued forms, 
let $H^{p,q}_{max}(\Omega^*,F)$ be the $L^2$-Dolbeault cohomology on $\Omega^*$ with respect
to the maximal closed extension $\dq_{max}$, i.e. the $\dq$-operator in the sense of distributions on $\Omega^*$,
and $H^{p,q}_{min}(\Omega^*,F)$ the $L^2$-Dolbeault cohomology with respect to the minimal closed
extension $\dq_{min}$.

Note that the $\dq$-operator in the sense of distributions, $\dq_{max}$, agrees with the operator $\dq_w$
defined above restricted to $L^{p,q}(\Omega^*,F)$.
On the other hand, $\dq_{min}$ has the same boundary condition as $\dq_s$ at the singular set $\Sing X$,
but it comes also with a Dirichlet boundary condition at $b\Omega$ which does not appear for $\dq_s$.

We will now identify the Hilbert space adjoints $\dq_{max}^*$ and $\dq_{min}^*$ of $\dq_{max}$ and $\dq_{min}$.
Let $\theta$ be the formal adjoint of $\dq$ as computed in Lemma \ref{lem:theta},
and denote by $\theta_{cpt}$ its action on smooth $F$-valued forms compactly supported in $\Omega^*$:
$$\theta_{cpt}: A^{p,q}_{cpt}(\Omega^*,F) \rightarrow A^{p,q-1}_{cpt}(\Omega^*,F).$$
This operator is graph closable as an operator $L^2_{p,q}(\Omega^*,F)\rightarrow L^2_{p,q-1}(\Omega^*,F)$,
and as for the $\dq$-operator, we denote by $\theta_{min}$ its minimal closed extension, i.e.
the closure of the graph, and by $\theta_{max}$ the maximal closed extension, that is the
$\theta$-operator in the sense of distributions with respect to compact subsets of $\Omega^*$.
By \eqref{eq:theta} we have:
\begin{eqnarray*}
\theta_{min} = - \o{*}_{F^*} \dq_{min} \o{*}_{F}\ \ ,\ \ \theta_{max} = - \o{*}_{F^*} \dq_{max} \o{*}_{F}.
\end{eqnarray*}
By definition, $\dq_{max} = \theta_{cpt}^*$,
and it follows that
\begin{eqnarray}\label{eq:adjoint1}
\dq_{max}^* = \big(\theta_{cpt}^{*}\big)^* = \o{\theta_{cpt}} = \theta_{min} = -\o{*}_{F^*}\dq_{min}\o{*}_{F},
\end{eqnarray}
where we denote by $\o{\theta_{cpt}}$ also the closure of the graph of $\theta_{cpt}$.
Analogously, $\theta_{max}=\dq_{cpt}^*$ (by definition) implies 
\begin{eqnarray}\label{eq:adjoint2}
\dq_{min}^* = \theta_{max} = - \o{*}_{F^*} \dq_{max} \o{*}_{F}.
\end{eqnarray}
For the sake of completeness, let us recall (see \cite{eins}, Theorem 2.3):

\begin{thm}\label{thm:serre-duality}
Assume that the $\dq$-operators in the sense of distributions
\begin{eqnarray*}
\dq_{max}: L^{p,q-1}(\Omega^*,F) \rightarrow L^{p,q}(\Omega^*,F)\ \ ,\ \ \dq_{max}: L^{p,q}(\Omega^*,F) \rightarrow L^{p,q+1}(\Omega^*,F)
\end{eqnarray*}
both have closed range (with the usual assumptions for $q=0$ or $q=n$). 
Then there exists a non-degenerate pairing
$$\{\cdot,\cdot\}: H^{p,q}_{max}(\Omega^*,F) \times H^{n-p,n-q}_{min}(\Omega^*,F^*) \rightarrow \C$$
given by $\{[\eta],[\psi]\}:=\int_{\Omega^*} \eta\wedge\psi$,
inducing (topological) isomorphisms $H^{p,q}_{max}(\Omega^*,F) \cong \big(H^{n-p,n-q}_{min}(\Omega^*,F^*)\big)'$
and $H^{n-p,n-q}_{min}(\Omega^*,F^*) \cong \big(H^{p,q}_{max}(\Omega^*,F)\big)'$.
\end{thm}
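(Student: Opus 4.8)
The statement to prove is Theorem \ref{thm:serre-duality}: under the closed-range hypotheses on $\dq_{max}$ in degrees $q-1$ and $q$, the pairing $\{[\eta],[\psi]\} = \int_{\Omega^*} \eta\wedge\psi$ induces a non-degenerate pairing between $H^{p,q}_{max}(\Omega^*,F)$ and $H^{n-p,n-q}_{min}(\Omega^*,F^*)$. This is classical abstract Hilbert-space Serre duality (Serre's original argument, as in Kodaira–Spencer or Hörmander), transported to the two closed extensions $\dq_{max}$ and $\dq_{min}$ living on the incomplete manifold $\Omega^*$. The whole point is that the relevant adjoint relations have already been established: equations \eqref{eq:adjoint1} and \eqref{eq:adjoint2} identify $\dq_{max}^* = -\o{*}_{F^*}\dq_{min}\o{*}_{F}$ and $\dq_{min}^* = -\o{*}_{F^*}\dq_{max}\o{*}_{F}$. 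So I would run the standard Hodge-theoretic argument.

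First I would set $T = \dq_{max}$ in degree $q-1$ and $S = \dq_{max}$ in degree $q$, viewed as closed densely defined operators on the Hilbert spaces $L^{p,\bullet}(\Omega^*,F)$. Both have closed range by hypothesis, and since $\dq$ is a complex we have $S\,T = 0$. The abstract theory of closed operators then gives the orthogonal (weak Hodge) decomposition
\begin{eqnarray*}
L^{p,q}(\Omega^*,F) = \o{\Im T} \oplus \mathcal{H}^{p,q} \oplus \o{\Im S^*},
\end{eqnarray*}
where $\mathcal{H}^{p,q} = \ker S \cap \ker T^*$ is the space of harmonic forms, and because the ranges are closed one has $\o{\Im T} = \Im T$ and $\o{\Im S^*} = \Im S^*$. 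Consequently the natural map $\mathcal{H}^{p,q} \to H^{p,q}_{max}(\Omega^*,F) = \ker S / \Im T$ is an isomorphism of Hilbert spaces. I would record the analogous decomposition for the dual complex: using \eqref{eq:adjoint1}–\eqref{eq:adjoint2}, the conjugate-linear isomorphism $\o{*}_{F}$ carries $\dq_{max}$-data in bidegree $(p,q)$ to $\dq_{min}$-data in bidegree $(n-p,n-q)$, interchanging kernels and images; in particular $\o{*}_F$ maps the $\dq_{max}$-harmonic space $\mathcal{H}^{p,q}$ isomorphically onto the $\dq_{min}$-harmonic space representing $H^{n-p,n-q}_{min}(\Omega^*,F^*)$.

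Next I would identify the pairing concretely on harmonic representatives. For $\eta\in\mathcal{H}^{p,q}$ (with values in $F$) and $\psi\in H^{n-p,n-q}_{min}(\Omega^*,F^*)$, writing $\psi = \o{*}_F\,\zeta$ for a suitable harmonic $\zeta$, the integral $\int_{\Omega^*}\eta\wedge\psi$ is, up to sign, exactly the $L^2$-inner product $(\eta,\o{*}_F^{-1}\psi)_{F,\Omega^*}$ via \eqref{eq:ip1}. Thus on harmonic representatives the Serre pairing becomes the Hilbert-space inner product between $\mathcal{H}^{p,q}$ and its image under $\o{*}_F$, which is manifestly non-degenerate. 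It then remains to check the pairing is well-defined on cohomology, i.e. independent of the chosen representatives: if $\eta = T\alpha$ is $\dq_{max}$-exact, or $\psi = \dq_{min}\beta$ with $\beta$ compactly approximable, then $\int_{\Omega^*}\eta\wedge\psi = 0$. This follows by Stokes from the compact-support approximation built into the definition of $\dq_{min}$ (Dirichlet boundary conditions at $b\Omega$ and the growth condition at $\Sing X$), which is precisely what makes the boundary terms vanish even though $\Omega^*$ is incomplete and has a boundary.

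\textbf{Main obstacle.} The genuinely delicate point is the vanishing of $\int_{\Omega^*}\eta\wedge\psi$ on exact classes, i.e. verifying that no boundary contribution survives Stokes' theorem on the incomplete, boundaried manifold $\Omega^*$. This is where the asymmetry between $\dq_{max}$ and $\dq_{min}$ is essential: one factor must be approximable by smooth forms with compact support in $\Omega^*$ so that Stokes applies with no boundary term, and this is exactly the content of the adjoint identities \eqref{eq:adjoint1}–\eqref{eq:adjoint2}. Everything else is a transcription of the standard finite-dimensional Serre duality argument into the framework of closed operators with closed range, so I would lean on those two already-proven adjoint relations to dispatch the well-definedness cleanly rather than re-deriving integration-by-parts by hand.
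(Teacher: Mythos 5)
Your argument is correct: the weak Hodge decomposition for the closed-range operators $T=\dq_{max}^{p,q-1}$, $S=\dq_{max}^{p,q}$, the identification of both cohomology groups with harmonic spaces exchanged by $\o{*}_F$, and the use of the adjoint relations \eqref{eq:adjoint1}--\eqref{eq:adjoint2} to kill the boundary terms on exact classes is exactly the standard route. Note that the paper itself gives no proof of Theorem \ref{thm:serre-duality} but only recalls it from \cite{eins}, Theorem 2.3, where it is established by essentially this same Hilbert-space argument, so there is nothing to add.
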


\subsection{Fr\'echet sheaves}\label{ssec:fs}

For convenience of the reader, let us recall from \cite{GR} a few preliminaries on the (unique) Fr\'echet space structure 
on coherent analytic sheaves. 

\begin{defn}[\cite{GR}, Definition VIII.A.3]\label{defn:fs}
Let $\mathcal{S}$ be a sheaf of vector spaces over a topological space $X$.
$\mathcal{S}$ is a {\bf Fr\'echet sheaf} if there is a neighborhood basis $\mathcal{U}=\{U\}$ of open sets 
such that the following two conditions hold:
\begin{itemize}
\item[(1)] $H^0(U,\mathcal{S})$ can be given the structure of a Fr\'echet space for all $U\in\mathcal{U}$.
\item[(2)] If $U,V\in\mathcal{U}$ and $V\subset U$, then the restriction map
$r_{UV}: H^0(U,\mathcal{S}) \rightarrow H^0(V,\mathcal{S})$ is continuous.
\end{itemize}
\end{defn}

\begin{thm}[\cite{GR}, Theorem VIII.A.7]\label{thm:fs1}
Let $(X,\OO_X)$ be a reduced complex space. There is a unique way of making every coherent analytic sheaf into a Fr\'echet sheaf
so that the following two conditions are satisfied.
\begin{itemize}
\item[(1)] If $\mathcal{S}$ is a subsheaf of $(\OO_X)^N$, the space of sections of 
$\mathcal{S}$ has the topology of uniform convergence on compact subsets.
\item[(2)] For any two coherent sheaves $\mathcal{S}$, $\mathcal{T}$ there is a neighborhood basis $\mathcal{U}$
such that $H^0(U,\mathcal{S})$, $H^0(U,\mathcal{T})$ are Fr\'echet spaces for all $U\in\mathcal{U}$.
Any $\OO_X$-homomorphism $\varphi: \mathcal{S} \rightarrow \mathcal{T}$ is continuous.
\end{itemize}
\end{thm}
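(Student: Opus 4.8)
The plan is to construct the topology locally from finite free presentations and to let functional analysis --- quotients of Fréchet spaces and the open mapping theorem --- do the bookkeeping, with the genuine analytic content concentrated in two inputs: Cartan's Theorem B on Stein sets and the closedness of the space of sections of a coherent subsheaf of a free sheaf. First I would fix the neighborhood basis $\mathcal{U}$ required by Definition \ref{defn:fs} to consist of relatively compact Stein open sets $U$ over which the coherent sheaf $\mathcal{S}$ admits a finite free presentation
\[
\OO_U^{\,p} \overset{A}{\longrightarrow} \OO_U^{\,q} \overset{\pi}{\longrightarrow} \mathcal{S}|_U \longrightarrow 0,
\]
with $A$ multiplication by a $q\times p$ matrix of holomorphic functions. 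I give $\OO(U)^q = H^0(U,\OO_U^q)$ the Fréchet topology of uniform convergence on compact subsets, which is exactly the topology demanded by condition (1) for the free sheaf. The kernel $\mathcal{R}:=\ker\pi=\operatorname{im}A$ is a coherent subsheaf of $\OO_U^q$, and the first analytic input is that $H^0(U,\mathcal{R})$ is a \emph{closed} subspace of $\OO(U)^q$; this already shows that condition (1) endows sections of coherent subsheaves of free sheaves with a genuine Fréchet topology, and it makes the relevant quotient meaningful. Applying Cartan's Theorem B to $0\to\mathcal{R}\to\OO_U^q\to\mathcal{S}|_U\to0$ over the Stein set $U$ gives an algebraic isomorphism $H^0(U,\mathcal{S})\cong \OO(U)^q/H^0(U,\mathcal{R})$, and I define the Fréchet structure on $H^0(U,\mathcal{S})$ to be the resulting quotient topology, which is Fréchet as the quotient of a Fréchet space by a closed subspace.

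Next I would prove \emph{simultaneously} that this topology is independent of the chosen presentation and that every $\OO_X$-homomorphism is continuous, i.e. condition (2). Given coherent $\mathcal{S},\mathcal{T}$ with presentations as above and a morphism $\varphi:\mathcal{S}\to\mathcal{T}$, after shrinking $U$ the composite $\OO_U^q\to\mathcal{S}|_U\xrightarrow{\varphi}\mathcal{T}|_U$ lifts through the presentation $\pi_{\mathcal{T}}:\OO_U^{\,q'}\to\mathcal{T}|_U$ to a morphism of free sheaves $\psi:\OO_U^q\to\OO_U^{\,q'}$, since $\OO_U^q$ is free and $\pi_{\mathcal{T}}$ is surjective on sections over the Stein set $U$, so the images of the standard generators can be lifted. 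Such a $\psi$ is multiplication by a holomorphic matrix, hence continuous for compact convergence; passing to the quotients, which are continuous and open, yields continuity of $H^0(U,\varphi)$. Taking $\mathcal{T}=\mathcal{S}$ and $\varphi=\mathrm{id}$ but computed from two different presentations shows that the two candidate quotient topologies admit continuous identity maps in both directions, so by the open mapping theorem for Fréchet spaces they coincide. Finally, the restriction maps $r_{UV}$ are continuous because restriction of holomorphic sections is continuous for compact convergence and descends to the quotients, which verifies both conditions of Definition \ref{defn:fs}.

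For uniqueness, suppose two Fréchet-sheaf structures on all coherent sheaves satisfy (1) and (2). Applying (2) to the identity homomorphism $\mathrm{id}:\mathcal{S}\to\mathcal{S}$, read first from the first structure into the second and then conversely, shows that the identity on $H^0(U,\mathcal{S})$ is continuous in both directions over a common neighborhood basis; the open mapping theorem then forces the two topologies to agree, so the structure is unique. The main obstacle is the analytic closedness input --- that $H^0(U,\mathcal{R})$ is closed in $\OO(U)^q$ for a coherent subsheaf $\mathcal{R}\subset\OO_U^q$ --- together with the local lifting of a sheaf morphism to the presenting free modules; once these are in hand, everything else reduces to the standard Fréchet-space facts about closed subspaces, quotients, and the open mapping theorem.
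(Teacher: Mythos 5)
The paper offers no proof of this statement; it is quoted verbatim from Gunning--Rossi \cite{GR}, Theorem VIII.A.7, so your proposal can only be measured against the standard argument there --- and it does reconstruct that argument faithfully: local finite free presentations over Stein neighborhoods, the closedness of $H^0(U,\mathcal{R})$ in $\OO(U)^q$ for a coherent subsheaf $\mathcal{R}$ (which you rightly isolate as the genuine analytic content, though note it is itself a nontrivial theorem of Cartan and cannot be treated as a formality), Theorem B to identify $H^0(U,\mathcal{S})$ with the quotient $\OO(U)^q/H^0(U,\mathcal{R})$, lifting of morphisms through the presenting free modules, and the open mapping theorem to settle independence of the presentation and continuity of homomorphisms.

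The one step that does not work as written is the uniqueness argument. Condition (2) asserts continuity of $\OO_X$-homomorphisms \emph{within} a single Fr\'echet-sheaf structure; applied to $\mathrm{id}:\mathcal{S}\to\mathcal{S}$ it only yields that the identity is continuous from structure $i$ to structure $i$, which is vacuous, so it cannot by itself compare two competing structures. The correct route --- using only tools you already have --- goes through the presentation: for either structure, the surjection $\pi_*:\OO(U)^q\to H^0(U,\mathcal{S})$ over a Stein $U$ is continuous by (2), and its source carries the \emph{same} topology in both structures because condition (1) pins the topology of sections of the free sheaf to compact convergence. The open mapping theorem then makes $\pi_*$ open for both structures, so each candidate topology on $H^0(U,\mathcal{S})$ is the quotient of $\OO(U)^q$ by the fixed closed subspace $H^0(U,\ker\pi)$, and the two coincide. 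A similar explicit check is owed for consistency of your construction with condition (1) when $\mathcal{S}$ is itself a coherent subsheaf of $\OO_X^N$: the quotient topology and the (closed, hence Fr\'echet) subspace topology of compact convergence admit a continuous identity in one direction, and the open mapping theorem identifies them --- you gesture at this, but since (1) is a constraint the construction must verify rather than a definition you may adopt, it should be stated.
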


\begin{thm}[\cite{GR}, Theorem VIII.A.8]\label{thm:fs2}
Let $(X,\OO_X)$ be a reduced complex space. Let $\mathcal{S}$ be a coherent analytic sheaf 
(and thus by Theorem \ref{thm:fs1} a Fr\'echet sheaf). For any open set $W$ of $X$, $H^0(W,\mathcal{S})$
can be given the structure of a Fr\'echet space in a unique way so that the restriction mappings are continuous.
We can choose a family of pseudonorms $\|\cdot\|_K$ on $H^0(K,\mathcal{S})$ for all compact sets $K$, so that,
if $K\subset K'$ and $f\in H^0(K',\mathcal{S})$ then $\|f\|_K \leq \|f\|_{K'}$. The topology of $H^0(W,\mathcal{S})$ is that
determined by the pseudonorms $\|\cdot\|_K$ for all $K\subset W$.
\end{thm}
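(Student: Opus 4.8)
The plan is to bootstrap from Theorem \ref{thm:fs1}, which already equips the sections of $\mathcal{S}$ over a neighborhood basis $\mathcal{U}=\{U\}$ with canonical Fr\'echet topologies and continuous restriction maps, up to arbitrary open sets by a projective-limit/gluing construction, and then to read the pseudonorms off from the resulting topology. First I would fix the Fr\'echet sheaf structure furnished by Theorem \ref{thm:fs1} and, for an arbitrary open $W\subset X$, endow $H^0(W,\mathcal{S})$ with the initial (projective-limit) topology $\tau$ with respect to all restrictions $r_{WU}\colon H^0(W,\mathcal{S})\to H^0(U,\mathcal{S})$ with $U\in\mathcal{U}$, $U\subset W$. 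This is the coarsest locally convex topology making every such restriction continuous, and it is Hausdorff since the $U$'s cover $W$ and a section restricting to zero on each of them vanishes by the sheaf axiom. Concretely, using that a complex space is second countable and paracompact, I would choose a countable family $\{U_i\}\subset\mathcal{U}$ covering $W$ (together with members of $\mathcal{U}$ covering the intersections, so that restriction to sections over $U_i\cap U_j$ is continuous); the sheaf axiom then identifies $H^0(W,\mathcal{S})$ with the kernel
\[
H^0(W,\mathcal{S})=\ker\Big(\prod_i H^0(U_i,\mathcal{S})\longrightarrow\prod_{i,j}H^0(U_i\cap U_j,\mathcal{S})\Big).
\]

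Because agreement of restrictions on overlaps is a closed condition (Hausdorffness of the target factors), $H^0(W,\mathcal{S})$ is a closed linear subspace of a countable product of Fr\'echet spaces, hence Fr\'echet: it is metrizable by countability, and complete because a $\tau$-Cauchy sequence $(f_n)$ restricts to a Cauchy, hence convergent, sequence in each $H^0(U_i,\mathcal{S})$, the local limits agree on overlaps by continuity of restriction, and so glue by the sheaf axiom to a global limit. For uniqueness I would argue as follows: if $\tau'$ is any Fr\'echet topology on $H^0(W,\mathcal{S})$ for which all restrictions to $U\in\mathcal{U}$, $U\subset W$, are continuous, then by the defining (initial) property of $\tau$ the identity map $(H^0(W,\mathcal{S}),\tau')\to(H^0(W,\mathcal{S}),\tau)$ is continuous; being a continuous linear bijection of Fr\'echet spaces, it is a homeomorphism by the open mapping theorem (the Fr\'echet case of the (LF)-open mapping theorem recalled after Theorem \ref{thm:rudin}), so $\tau'=\tau$.

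Finally, for the pseudonorms I would proceed locally. For a compact $K\subset W$, cover $K$ by finitely many $U_1,\dots,U_m\in\mathcal{U}$, choose compacts $K_\ell\subset U_\ell$ with $K\subset\bigcup_\ell\mathring{K_\ell}$, and set $\|f\|_K:=\max_{1\le\ell\le m}\|f|_{U_\ell}\|_{K_\ell}$, where $\|\cdot\|_{K_\ell}$ is a defining seminorm of $H^0(U_\ell,\mathcal{S})$ (which, by Theorem \ref{thm:fs1}(1), is literally the supremum over $K_\ell$ of a lifting system of holomorphic functions under a local embedding $\mathcal{S}\hookrightarrow\OO^N$). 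Each such seminorm is monotone under shrinking the underlying compact set, so choosing compatible covers for $K\subset K'$ yields $\|f\|_K\le\|f\|_{K'}$; and since the family of these seminorms, taken over all compacts $K\subset W$ and all such good covers, generates exactly the projective-limit topology $\tau$, the topology of $H^0(W,\mathcal{S})$ is the one determined by the $\|\cdot\|_K$.

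The hard part will be the bookkeeping that makes the pseudonorms genuinely monotone and essentially independent of the auxiliary covers and lifts, together with the completeness verification, namely that a sequence Cauchy in all the $\|\cdot\|_K$ has local limits that are truly compatible and glue to a global section. By contrast, the one genuinely deep analytic input — that the sections of a coherent sheaf over a good neighborhood form a Fr\'echet space at all, i.e.\ that the image of a local free presentation has closed range (ultimately Cartan's coherence theorems) — is already absorbed into Theorem \ref{thm:fs1}, so here it need only be invoked rather than reproved.
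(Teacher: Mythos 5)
The paper does not actually prove this statement: it is quoted verbatim from Gunning--Rossi (\cite{GR}, Theorem VIII.A.8) and used as a black box in Section \ref{ssec:fs}, so there is no internal proof to compare against. Your reconstruction --- initial (projective-limit) topology with respect to restrictions to basis neighborhoods, realization of $H^0(W,\mathcal{S})$ as a closed subspace of a countable product of Fr\'echet spaces via the sheaf axiom, and uniqueness by the open mapping theorem --- is the standard argument and is essentially sound, and close in spirit to the source. Two points deserve more than the ``bookkeeping'' label you give them at the end. First, when you pass to a countable subfamily $\{U_i\}$ to obtain metrizability, you implicitly need that the resulting topology coincides with the initial topology with respect to \emph{all} basis elements $U\subset W$; this requires that the canonical topology on $H^0(U,\mathcal{S})$ for $U$ in the basis is itself local, i.e.\ generated by restrictions to smaller basis elements. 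That is true for the topology of compact convergence of liftings from Theorem \ref{thm:fs1}(1), but it must be invoked explicitly, since it is essentially a special case of the pseudonorm assertion you are proving --- otherwise the full initial topology could a priori be strictly finer than the countable one. Second, the monotone family $\|\cdot\|_K$ cannot be produced by an independent ad hoc choice of cover for each compact $K$: one needs a globally coherent construction, e.g.\ $\|f\|_K$ as an infimum of sup-norms over $K$ of liftings with respect to a fixed locally finite system of presentations $\OO^{N}\twoheadrightarrow\mathcal{S}$, for which monotonicity under $K\subset K'$ is automatic. Neither issue is fatal, but together they are the actual substance of the Gunning--Rossi statement rather than routine verification.
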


\bigskip
\section{Topological Serre duality for $\dq$-operators}\label{sec:serre}

We will now derive duality statements similar to $L^2$-Hilbert space duality, Theorem \ref{thm:serre-duality},
for the operators $\dq_w$ and $\dq_s$ on the spaces $L^{p,q}_{loc}$ and $L^{p,q}_{cpt}$, respectively.
We have to face the problem that we deal just with densely defined operators on locally convex topological vector spaces.
This requires some extra work.
Moreover, another main difficulty is to show 
that the operators $\dq_w$ and $\dq_s$ are topologically dual at arbitrary singularities.

\medskip
\subsection{Separated cohomology groups}

Note that the cohomology groups defined in Section \ref{ssec:complexes}, $H^{p,q}_{w/s,loc/cpt}(\Omega,F)$
are Hausdorff, i.e., separated, if and only if the corresponding $\dq$-operator has closed range on $\Omega$.

In view of duality statements, we need to work with the associated separated cohomology groups
which are obtained by taking the quotient spaces by $\o{\{0\}}$, i.e, considering the quotient spaces
$\ker \dq_{w/s} / \overline{\Im \dq_{w/s}}$.
We call these the separated cohomology groups, denoted by
\begin{eqnarray*}
\o{H}^{p,q}_{w/s,loc/cpt} (\Omega,F) := \frac{H^{p,q}_{w/s,loc/cpt}(\Omega,F)}{ \o{\{0\}} }
= \frac{\ker \dq_{w/s} }{ \overline{\Im \dq_{w/s}}}.
\end{eqnarray*}

\medskip
\subsection{Duality between separated $\dq_w$- and $\dq_s$-cohomology}

\begin{thm}
\label{thm:duality1}
Under the non-degenerate pairing
\begin{eqnarray}\label{eq:pairing1}
\o{H}^{p,q}_{w,loc}(\Omega,F) \times \o{H}^{n-p,n-q}_{s,cpt}(\Omega,F^*) \rightarrow \C\ ,
\ \ ([\eta],[\omega]) \mapsto \int_{\Omega^*} \eta\wedge\omega,
\end{eqnarray}
$\o{H}^{n-p,n-q}_{s,cpt}(\Omega,F^*)$ is algebraically isomorphic
to the dual space of $\o{H}^{p,q}_{w,loc}(\Omega,F)$,
and, vice versa, $\o{H}^{p,q}_{w,loc}(\Omega,F)$ is algebraically isomorphic
to the dual space of $\o{H}^{n-p,n-q}_{s,cpt}(\Omega,F^*)$.
\end{thm}

\begin{proof}
(I) The integral in \eqref{eq:pairing1} is always finite as $\eta\in L^{p,q}_{loc}$ and $\omega\in L^{n-p,n-q}_{cpt}$.
Assume that $\eta= \dq_w \eta'$ with $\eta' \in \Dom \dq_w \cap L^{p,q-1}_{loc}$. Then
\begin{eqnarray}\label{eq:rev11}
\int_{\Omega^*} \eta\wedge \omega = \int_{\Omega^*} \dq_w \eta' \wedge \omega = \int_{\Omega^*} \eta'\wedge \dq_s \omega = 0,
\end{eqnarray}
because $\omega$ has compact support in $\Omega$ and can be (by definition of $\dq_s$) approximated in the graph norm 
by forms with compact support away from the singular set so that partial integration is possible.
By approximation, \eqref{eq:rev11} holds as well for $\eta \in \o{\Im \dq_w}$.

Analogously, assume that $\omega=\dq_s \omega'$ with $\omega' \in \Dom\dq_s \cap L^{n-p,n-q-1}_{cpt}$.
Then
\begin{eqnarray}\label{eq:rev12}
\int_{\Omega^*} \eta\wedge \omega = \int_{\Omega^*} \eta \wedge \dq_w \omega' = \int_{\Omega^*} \dq_s \eta\wedge \omega' = 0
\end{eqnarray}
by the same argument fas above.
By approximation, \eqref{eq:rev12} holds as well for $\omega \in \o{\Im \dq_s} \cap L^{n-p,n-q}_{cpt}$.
This shows that the pairing \eqref{eq:pairing1} is well-defined.

\medskip
(II) Let $[\omega] \in \o{H}^{n-p,n-q}_{s,cpt}(\Omega,F^*)$,
represented by $\omega\in \ker\dq_s \cap L^{n-p,n-q}_{cpt}(\Omega,F^*)$.
Then $\omega$ defines by Theorem \ref{thm:duality} a continuous linear functional on $L^{p,q}_{w,loc}(\Omega,F)$ by
the assignment
$$\eta  \mapsto \int_{\Omega^*} \eta\wedge\omega.$$

This induces (by the partial integration argument from above) the continuous\footnote{
Let $X$ be a topological vector space, $N$ a closed subspace and $X/N$ the quotient space with the quotient topology.
Then the projection $\pi: X \rightarrow X/N$ is an open mapping (see e.g. \cite{Rd}, Theorem 1.41).
So, if $\Lambda$ is a continuous linear functional on $X$ with $N\subset \ker \Lambda$, then $\Lambda$ induces a continuous
linear functional on $X/N$.} linear functional 
$$[\eta] \mapsto \int_{\Omega^*} \eta\wedge \omega$$
on the quotient space $\o{H}^{p,q}_{w,loc}(\Omega,F))$ of the closed subspace $\ker \dq_w \cap L^{p,q}_{w,loc}(\Omega,F)$.
Thus, $[\omega]$ represents in fact a continuous linear functional in $\big(\o{H}^{p,q}_{w,loc}(\Omega,F)\big)'$

Conversely, let $\Lambda \in \big(\o{H}^{p,q}_{w,loc}(\Omega,F)\big)'$.
As we consider the separated cohomology, the projection
$$\pi: \ker \dq_w \rightarrow \frac{\ker \dq_w}{\o{\Im\dq_w}} = \o{H}^{p,q}_{w,loc}\big(\Omega,F\big)$$
is continuous (see e.g. \cite{Rd}, Theorem 1.41), and so $\Lambda\circ \pi$ is a continuous linear functional on $\ker\dq_w$.
But $\ker\dq_w$ is a closed subspace of $L^{p,q}_{w,loc}(\Omega,F)$.
So, $\Lambda\circ\pi$ extends by the Hahn-Banach theorem to a continuous linear functional $\Lambda'$ on $L^{p,q}_{loc}(\Omega,F)$.
But we know already that the dual space of $L^{p,q}_{loc}(\Omega,F)$ is isomorphic to $L^{n-p,n-q}_{cpt}(\Omega,F^*)$.
Thus, there is by Theorem \ref{thm:duality} a form $\omega \in L^{n-p,n-q}_{cpt}(\Omega,F^*)$ representing $\Lambda'$:
\begin{eqnarray*}
\Lambda': L^{p,q}_{loc}(\Omega,F) &\rightarrow& \C\ ,\\
f &\mapsto& \Lambda'\big(f\big)=\int_{\Omega^*} f\wedge \omega.
\end{eqnarray*}
So, the continuous linear functional $\Lambda\circ\pi$ is represented by $\omega$, too:
\begin{eqnarray}\label{eq:duality01}
\Lambda\circ\pi:\ \ker \dq_w \cap L^{p,q}_{loc}(\Omega,F) \rightarrow \C\ ,\ \eta \mapsto \int_{\Omega^*} \eta\wedge \omega.
\end{eqnarray}

We claim that $\omega \in \Dom\dq_s$ and $\dq_s \omega=0$.
But \eqref{eq:duality01} implies that
\begin{eqnarray*}
(\dq_w g, \o{*}_{F^*}\omega)_{F,\Omega^*} = \pm \int_{\Omega^*} \dq_w g\wedge \omega = 0
\end{eqnarray*}
for all $g\in \Dom\dq_w \cap L^{p,q-1}_{loc}(\Omega,F)$, because $\Lambda\circ\pi$ vanishes on $\o{\Im \dq_w}$.
But then, particularly, 
\begin{eqnarray}\label{eq:duality02}
(\dq_{max} g,\o{*}_{F^*}\omega)_{\Omega^*}=0\ \ \ \forall g\in \Dom\dq_{max} \cap L^{p,q-1}(\Omega^*,F).
\end{eqnarray}
Now recall that $\dq_{max}^* = \theta_{min}$ (see \eqref{eq:adjoint1}).
Thus, \eqref{eq:duality02} just means that $\omega \in \Dom \dq_{min} \cap L^{n-p,n-q}(\Omega^*,F^*)$
with $\dq_{min}\omega=0$.
But then also $\omega \in \Dom\dq_s$ with $\dq_s\omega=0$.
This shows that in fact any continuous linear functional $\Lambda \in \big(\o{H}^{p,q}_{w,loc}(\Omega,F)\big)'$
is represented by a cohomology class $[\omega] \in \o{H}^{n-p,n-q}_{s,cpt}(\Omega,F^*)$
under the pairing \eqref{eq:pairing1}.

\bigskip
(III) It remains to show that, conversely,
$\o{H}^{p,q}_{w,loc}(\Omega,F)$ is the topological dual of $\o{H}^{n-p,n-q}_{s,cpt}(\Omega,F^*)$.
As above, it is clear that $[\eta]\in \o{H}^{p,q}_{w,loc}(\Omega,F)$ represents a continuous linear functional in
$\big(\o{H}^{n-p,n-q}_{s,cpt}(\Omega,F^*)\big)'$.

So, consider $\Lambda \in \big(\o{H}^{n-p,n-q}_{s,cpt}(\Omega,F^*)\big)'$.
As above, using the continuous projection $\pi: \ker \dq_s \rightarrow \ker\dq_s/ \o{\Im\dq_s}$,
we obtain the continuous linear functional $\Lambda\circ \pi$ on $\ker\dq_s$
which is a closed subspace of the locally convex topological vector space $L^{n-p,n-q}_{cpt}(\Omega,F^*)$.
So, by the Hahn-Banach theorem (see e.g. \cite{Rd}, Theorem 3.6), there is an extension of $\Lambda\circ\pi$
to a linear continuous functional $\Lambda'$ on $L^{n-p,n-q}_{cpt}(\Omega,F^*)$
which is then by Theorem \ref{thm:duality} represented by a form $\eta \in L^{p,q}_{loc}(\Omega,F)$:
\begin{eqnarray*}
\Lambda': L^{n-p,n-q}_{cpt}(\Omega,F^*) &\rightarrow& \C\ ,\\
f &\mapsto& \Lambda'\big(f\big)=\int_{\Omega^*} f\wedge \eta.
\end{eqnarray*}
So, the continuous linear funtional $\Lambda\circ\pi$ is represented by $\eta$, too:
\begin{eqnarray}\label{eq:duality03}
\Lambda\circ\pi:\ L^{n-p,n-q}_{cpt}(\Omega,F^*) \rightarrow \C\ ,\ \omega \mapsto \int_{\Omega^*} \omega\wedge \eta.
\end{eqnarray}
We claim that $\eta \in \Dom\dq_w$ and $\dq_w \eta=0$, i.e. $\dq\eta=0$ in the sense of distributions on $\Omega^*$.
But this follows from the following observation. \eqref{eq:duality03} implies particularly that
\begin{eqnarray}\label{eq:duality04}
\int_{\Omega^*} \dq\varphi\wedge \eta=0
\end{eqnarray}
for any smooth testform $\varphi\in A^{n-p,n-q-1}_{cpt}(\Omega^*,F^*)$,
because then $\varphi \in \Dom\dq_s \cap L^{n-p,n-q-1}_{cpt}(\Omega^*,F^*)$
by \cite{PAMS}, Theorem 1.6, and $\Lambda\circ\pi$ vanishes on $\o{\Im\dq_s}$.

This shows that in fact any continuous linear functional $\Lambda \in \big(\o{H}^{n-p,n-q}_{s,cpt}(\Omega,F^*)\big)'$
is represented by a cohomology class $[\eta] \in \o{H}^{p,q}_{w,loc}(\Omega,F)$
under the pairing \eqref{eq:pairing1}.
\end{proof}

There is another interesting topological duality pairing:

\begin{thm}
\label{thm:duality2}
Under the non-degenerate pairing
\begin{eqnarray}\label{eq:pairing2}
\o{H}^{p,q}_{s,loc}(\Omega,F) \times \o{H}^{n-p,n-q}_{w,cpt}(\Omega,F^*) \rightarrow \C\ ,
\ \ ([\eta],[\omega]) \mapsto \int_{\Omega^*} \eta\wedge\omega,
\end{eqnarray}
$\o{H}^{n-p,n-q}_{w,cpt}(\Omega,F^*)$ is algebraically isomorphic
to the dual space of $\o{H}^{p,q}_{s,loc}(\Omega,F)$,
and, vice versa, $\o{H}^{p,q}_{s,loc}(\Omega,F)$ is algebraically isomorphic
to the dual space of $\o{H}^{n-p,n-q}_{w,cpt}(\Omega,F^*)$.
\end{thm}

The proof is similar to the proof of Theorem \ref{thm:duality1}, but there is an additional difficulty that we should discuss carefully.

\begin{proof}
(I) First, it is seen as in the proof of Theorem \ref{thm:duality1} that the pairing \eqref{eq:pairing2} is well-defined
because partial integration is possible: forms in $\ker\dq_s \cap L^{p,q}_{loc}(\Omega,F)$
can be approximated in the graph norm by forms with support away
from the singular set, and forms in $L^{n-p,n-q}_{w,cpt}(\Omega,F^*)$ have compact support in $\Omega$.

\medskip
(II) As in the proof of Theorem \ref{thm:duality1}, a cohomology class in $ \o{H}^{n-p,n-q}_{w,cpt}(\Omega,F^*)$
defines a continuous linear functional on $\o{H}^{p,q}_{s,loc}(\Omega,F)$.

Conversely, let $\Lambda \in \big(\o{H}^{p,q}_{s,loc}(\Omega,F)\big)'$.
It is seen completely analogous to the proof of Theorem \ref{thm:duality1} that $\Lambda\circ\pi$ is represented
by a form $\omega \in L^{n-p,n-q}_{cpt}(\Omega,F^*)$:
\begin{eqnarray}\label{eq:duality05}
\Lambda\circ\pi:\ \ker\dq_s\cap L^{p,q}_{loc}(\Omega,F) \rightarrow \C\ ,\ \eta \mapsto \int_{\Omega^*} \eta\wedge \omega.
\end{eqnarray}
We claim that $\omega \in \Dom\dq_w$ and $\dq_w \omega=0$.
But this is the same as above: \eqref{eq:duality05} yields particularly (cf. \eqref{eq:duality04}) that
\begin{eqnarray}\label{eq:duality06}
\int_{\Omega^*} \dq\varphi\wedge \omega=0\ \ \ \forall \varphi\in A^{p,q-1}_{cpt}(\Omega^*,F).
\end{eqnarray}

\medskip
(III) It remains to show that $\o{H}^{p,q}_{s,loc}(\Omega,F)$ 
is the topological dual of $\o{H}^{n-p,n-q}_{w,cpt}(\Omega,F^*)$.
Here, an additional difficulty appears.

Again, it is clear that a cohomology class in $\o{H}^{p,q}_{s,loc}(\Omega,F)$
represents a continuous linear functional on $\o{H}^{n-p,n-q}_{w,cpt}(\Omega,F^*)$.

For the converse, let $\Lambda\in \big(\o{H}^{n-p,n-q}_{w,cpt}(\Omega,F^*)\big)'$.
It is seen as above that $\Lambda\circ\pi$ is represented
by a form $\eta \in L^{p,q}_{loc}(\Omega,F)$:
\begin{eqnarray}\label{eq:duality07}
\Lambda\circ\pi:\ \ker\dq_w \cap L^{n-p,n-q}_{cpt}(\Omega,F^*)\big) \rightarrow \C\ ,
\ \omega \mapsto \int_{\Omega^*} \omega\wedge \eta.
\end{eqnarray}
We have to show that $\eta \in\Dom\dq_s$ with $\dq_s\eta=0$.
As above, it follows from \eqref{eq:duality07} that 
\begin{eqnarray}\label{eq:duality08}
\int_{\Omega^*} \dq\varphi\wedge \eta=0\ \ \ \forall \varphi\in A^{n-p,n-q-1}_{cpt}(\Omega^*,F^*).
\end{eqnarray}
Thus $\eta\in\Dom\dq_w$ and $\dq_w \eta=0$, i.e. $\dq\eta=0$ in the sense of distributions on $\Omega^*$.

\medskip
It remains to show that $\eta\in \Dom\dq_s$. That has to be checked on compact subsets $K\subset\subset \Omega$.
So, let $K\subset\subset \Omega$ compact and
let $\chi\in C^\infty_{cpt}(\Omega)$ be a smooth cut-off function with compact support in $\Omega$
which is identically $1$ in a neighborhood of $K$.
Then $\chi$ and $\dq\chi$ are uniformly bounded (in the sup-norm).
So, it follows from \eqref{eq:duality07} that
\begin{eqnarray}\label{eq:duality09}
\int_{\Omega^*} \dq_w(\chi \varphi)\wedge \eta =0
\end{eqnarray}
for all $\varphi \in \Dom\dq_w \cap L^{n-p,n-q-1}(\Omega^*,F^*)$
because then we have $\chi\varphi \in \Dom\dq_w \cap L^{n-p,n-q-1}_{cpt}(\Omega,F^*)$
and $\dq(\chi\varphi) \in \Im\dq_w \cap L^{n-p,n-q}_{cpt}(\Omega,F^*)\big)$.

But now it follows from \eqref{eq:duality09} that
\begin{eqnarray}\label{eq:duality10}
\big(\dq_w \varphi, \o{*}_{F} (\chi \eta)\big)_{\Omega^*,F^*} &=& \pm \int_{\Omega^*} \dq_w \varphi\wedge \chi \eta 
= \pm \int_{\Omega^*} \varphi\wedge \dq\chi\wedge \eta\\
&=& \pm \big(\varphi, \o{*}_F (\dq\chi\wedge \eta)\big)_{\Omega^*,F^*}\label{eq:duality11}
\end{eqnarray}
for all $\varphi \in \Dom\dq_w \cap L^{n-p,n-q-1}(\Omega^*,F^*)$.

Now recall that $\dq_w=\dq_{max}$ on $L^{n-p,n-q-1}(\Omega^*,F^*)$ and that
$\dq_{max}^* = \theta_{min}$ (see \eqref{eq:adjoint1}).
Thus, \eqref{eq:duality10},\eqref{eq:duality11} 
just means that $\chi\eta \in \Dom \dq_{min} \cap L^{p,q}(\Omega^*,F)$ with $\dq_{min}(\chi\eta)=\dq\chi\wedge \eta$.
But then we have on $K$ also $\eta \in \Dom\dq_s(K)$ because $\chi\equiv 1$ on $K$.
\end{proof}

\bigskip
\subsection{The closed range condition}

We are clearly interested in replacing the separated cohomology groups $\o{H}=\ker/\o{\Im}$ in Theorem \ref{thm:duality1}
and Theorem \ref{thm:duality2} by the 'real' cohomology $H=\ker/\Im$.
So, we need to study closed range conditions for the $\dq$-operators under consideration.
As a preparation, let us note:

\begin{lem}\label{lem:openmapping}
Let
\begin{eqnarray}\label{eq:rev13}
\dq_{w/s}: \Dom\dq_{w/s} \cap L^{p,q}_{loc}(\Omega,F) \longrightarrow L^{p,q+1}_{loc}(\Omega,F)
\end{eqnarray}
have closed range. 

Then the inverse mapping (i.e. the corresponding $\dq$-solution operator)
\begin{eqnarray*}
L=(\dq_{w/s})^{-1}:  \Im\dq_{w/s} \longrightarrow \frac{\Dom\dq_{w/s} \cap L^{p,q}_{w/s,loc}(\Omega,F)}{\ker \dq_{w/s}}.
\end{eqnarray*}
is continuous.
\end{lem}

\begin{proof}
As $\dq_{w/s}$ is a closed operator,
the graph $\Gamma_{w/s}$ of \eqref{eq:rev13} is a closed subspace of 
the Fr{\'e}chet space $L^{p,q}_{loc}(\Omega,F) \times L^{p,q+1}_{loc}(\Omega,F)$.
Hence, again a Fr{\'e}chet space with the induced topology.
By assumption, the range of \eqref{eq:rev13}, $\Im \dq_{w/s}$, is a closed subspace
of $L^{p,q+1}_{loc}(\Omega,F)$, so also a Fr{\'e}chet space with the induced topology.
Hence, by the open mapping theorem, it follows that the projection operator
\begin{eqnarray*}
\pi^{q+1}: \Gamma_{w/s} \longrightarrow \Im\dq_{w/s}
\end{eqnarray*}
is open. Thus, the induced mapping
\begin{eqnarray*}
\o{\pi^{q+1}}: \Gamma_{w/s}/\ker \pi^{q+1} \longrightarrow \Im\dq_{w/s}
\end{eqnarray*}
is again open and its inverse mapping
\begin{eqnarray*}
\big(\o{\pi^{q+1}}\big)^{-1}: \Im\dq_{w/s} \longrightarrow \Gamma_{w/s}/\ker \pi^{q+1}
\end{eqnarray*}
is continuous. Let moreover $\o{\pi^q}$ be the other induced projection
\begin{eqnarray*}
\o{\pi^q}: \frac{\Gamma_{w/s}}{\ker \pi^{q+1}} \longrightarrow
\frac{\Dom\dq_{w/s} \cap L^{p,q}_{w/s,loc}(\Omega,F)}{\ker \dq_{w/s}},
\end{eqnarray*}
which is also continuous. So, we obtain that $\dq_{w/s}^{-1}:=\o{\pi^q} \circ \big( \o{\pi^{q+1}}\big)^{-1}$
is actually a continuous operator.
\end{proof}

\medskip
\begin{lem}\label{lem:range}
If $\dq_{w/s}: \Dom\dq_{w/s}\cap L^{p,q}_{loc}(\Omega,F) \rightarrow L^{p,q+1}_{loc}(\Omega,F)$ has closed range,
then 
$$\dq_{s/w}:\ \Dom\dq_{s/w}\cap L^{n-p,n-q-1}_{cpt}(\Omega,F^*) \longrightarrow L^{n-p,n-q}_{cpt}(\Omega,F^*)$$
has closed range, too.
\end{lem}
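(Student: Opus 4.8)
The plan is to exhibit the two operators in the statement as topological transposes of one another with respect to the pairing of Theorem~\ref{thm:duality}, and then to deduce closed range of the second from closed range of the first by the closed range theorem in the duality between the Fr\'echet spaces $L^{\bullet}_{loc}$ and the (LF)-spaces $L^{\bullet}_{cpt}$. Note that the four cases encoded by $w/s$ and $loc/cpt$ pair up into the two transpose situations already met in Theorem~\ref{thm:duality1} ($w,loc\leftrightarrow s,cpt$) and Theorem~\ref{thm:duality2} ($s,loc\leftrightarrow w,cpt$), so it suffices to treat one of them, say that closed range of $\dq_w$ on $loc$ implies closed range of $\dq_s$ on $cpt$.

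First I would pass from the graph operators to the underlying densely defined operators. The image of $\dq_{w/s}\colon \Gamma^{p,q}_{w/s,loc/cpt}(\Omega,F)\to\Gamma^{p,q+1}_{w/s,loc/cpt}(\Omega,F)$ is $(\Im\dq_{w/s})\times\{0\}$, which is closed in $\Gamma^{p,q+1}_{w/s,loc/cpt}(\Omega,F)$ if and only if $\Im\dq_{w/s}$ is closed in $L^{p,q+1}_{loc/cpt}(\Omega,F)$; hence it is enough to work with $\dq_{w/s}$ as a closed, densely defined operator $L^{p,q}_{loc/cpt}(\Omega,F)\to L^{p,q+1}_{loc/cpt}(\Omega,F)$ and to show that closed range is inherited by its transpose.

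Next I would identify that transpose. Under the pairing of Theorem~\ref{thm:duality}, which makes $L^{n-p,n-q}_{cpt}(\Omega,F^*)$ and $L^{p,q}_{loc}(\Omega,F)$ mutually strong duals, the transpose of $\dq_w\colon L^{p,q}_{loc}(\Omega,F)\to L^{p,q+1}_{loc}(\Omega,F)$ is a closed densely defined operator $L^{n-p,n-q-1}_{cpt}(\Omega,F^*)\to L^{n-p,n-q}_{cpt}(\Omega,F^*)$, whose bidegrees already coincide with those of $\dq_s$ in the statement. The substance is to check that this transpose is, up to sign, exactly $\dq_s$: a form $\omega$ lies in the domain of the transpose precisely when $f\mapsto\int_{\Omega^*}\dq_w f\wedge\omega$ extends continuously, i.e. when $\int_{\Omega^*}\dq_w f\wedge\omega=\pm\int_{\Omega^*}f\wedge\psi$ for some $\psi$ and all $f\in\Dom\dq_w$, and I would show this is equivalent to $\omega\in\Dom\dq_s$ with $\psi=\pm\dq_s\omega$. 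This is precisely the integration-by-parts analysis already carried out in the proofs of Theorem~\ref{thm:duality1} and Theorem~\ref{thm:duality2}: one uses the formal adjoint $\theta=-\o{*}_{F^*}\dq\,\o{*}_{F}$ of Lemma~\ref{lem:theta} together with the Hilbert space identity $\dq_{max}^*=\theta_{min}$ from \eqref{eq:adjoint1}, and a cut-off localization to reconcile the Dirichlet condition of $\dq_{min}$ at $b\Omega$ with the purely local condition defining $\dq_s$.

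Finally, with the two operators identified as mutual transposes, I would invoke the closed range theorem for this dual pair: the spaces are reflexive by Lemma~\ref{lem:reflexive}, and the open mapping theorem is available in both the Fr\'echet and the (LF) category by \cite{DS} (as already used in Lemma~\ref{lem:openmapping}), so a closed densely defined operator in the duality has closed range if and only if its transpose does. Thus closed range of $\dq_w$ forces closed range of $\dq_s$, and undoing the reduction of the first step yields the assertion for the graph operators; the remaining cases are identical after interchanging $w\leftrightarrow s$ and $loc\leftrightarrow cpt$. I expect the genuine obstacle to be the transpose identification of the third step, namely proving that membership in the domain of the transpose is not merely implied by but equivalent to membership in $\Dom\dq_s$, which forces one to separate the $b\Omega$-boundary condition hidden in $\dq_{min}$ from the intrinsically local nature of $\dq_s$ via the cut-off argument of Theorem~\ref{thm:duality2}; once this is in place, the closed range theorem of the last step is routine.
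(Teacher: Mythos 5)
Your architecture --- reduce to the underlying densely defined operators, identify $\pm\dq_{s/w}$ on the dual side as the transpose of $\dq_{w/s}$ via the pairing of Theorem~\ref{thm:duality}, then transfer closed range across the duality --- is exactly the skeleton of the paper's proof, and you correctly locate the substance in the transpose identification (the $\dq_{max}^*=\theta_{min}$ computation plus the cut-off localization from Theorem~\ref{thm:duality2}). The genuine hole is the last step: you invoke ``the closed range theorem'' for closed densely defined operators in the duality between Fr\'echet spaces and (LF)-spaces, justified only by reflexivity and the availability of the open mapping theorem. No such theorem can be cited off the shelf in this generality, and reflexivity plus the open mapping theorem do not by themselves deliver it; even for everywhere-defined continuous maps between Fr\'echet spaces the closed range theorem is a statement about weak*-closedness of the image of the transpose whose proof has real content, and here the operators are only densely defined and one of the two sides is an (LF)-space. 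Your ``if and only if'' also overclaims: the converse direction (closed range of the transpose forces closed range of the original) is the delicate half even in the Banach setting, and it is neither needed for the lemma nor established by your argument.

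The direction you actually need --- $\Im\dq_{w/s}$ closed implies $\Im{}^t\dq_{w/s}=(\ker\dq_{w/s})^{\perp}$, which is weak*-closed and a fortiori closed --- must be proved by hand, and that is precisely what the paper does: given $\omega\in\overline{\Im\dq_{s/w}}$, one checks that $\omega$ annihilates $\ker\dq_{w/s}$ by partial integration, uses the continuous solution operator $L$ of Lemma~\ref{lem:openmapping} (this is where the closed range hypothesis and the open mapping theorem enter) to obtain a continuous functional on $\Im\dq_{w/s}$, extends it by Hahn--Banach to $P^{p,q+1}_{loc/cpt}(\Omega,F)$, represents the extension by a form $\lambda_2$ via Theorem~\ref{thm:duality}, and finally shows $\lambda_2\in\Dom\dq_{s/w}$ with $\dq_{s/w}\lambda_2=\omega$ by the four case-by-case arguments of Theorems~\ref{thm:duality1} and~\ref{thm:duality2}. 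So your outline is recoverable, but the step you label as routine is where the proof lives; it should be written out rather than delegated to a theorem that does not exist in the form you need.
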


\begin{proof}
Let $\omega\in\o{\Im\dq_{s/w}} \cap L^{n-p,n-q}_{cpt}(\Omega,F^*)$. We will show that $\omega\in\Im\dq_{s/w}$.

\smallskip
By Theorem \ref{thm:duality}, 
$\omega$ represents a continuous linear functional on $L^{p,q}_{loc}(\Omega,F)$,
and by partial integration, as in the proof of Theorem \ref{thm:duality1},
one sees that this functional vanishes on $\ker\dq_{w/s}$ because $\dq_{s/w}\omega=0$.

Thus, $\omega$ represents a continuous linear functional
\begin{eqnarray}\label{eq:range01}
\o{\omega}:\ \frac{L^{p,q}_{loc}(\Omega,F)}{\ker \dq_{w/s}} \rightarrow \C\ ,\ [\eta] \mapsto \int_{\Omega^*} \eta\wedge \omega.
\end{eqnarray}
By continuity of the mapping $L$ from Lemma \ref{lem:openmapping}, we obtain a continuous linear functional
\begin{eqnarray}\label{eq:range02}
\o{\omega}\circ L:\ \Im\dq_{w/s} \rightarrow \C\ ,\ f\mapsto \int_{\Omega^*} (\dq_{w/s})^{-1} f\wedge \omega.
\end{eqnarray}

By the Hahn-Banach theorem, $\o{\omega}\circ L$ extends to a continuous linear functional $\Lambda$ on
$L^{p,q+1}_{loc}(\Omega,F)$.
As such it is represented, again by Theorem \ref{thm:duality},
by a form $\lambda \in L^{n-p,n-q-1}_{cpt}(\Omega,F^*)$.

We claim that $\lambda \in\Dom\dq_{s/w}$ with $\dq_{s/w} \lambda = \omega$.
To see that, note that it follows from \eqref{eq:range01} and \eqref{eq:range02} that
\begin{eqnarray}\label{eq:new51}
\int_{\Omega^*} \eta\wedge \omega = \int_{\Omega^*} \dq_{w/s} \eta \wedge \lambda
\end{eqnarray}
for all $\eta\in \Dom\dq_{w/s} \cap L^{p,q}_{loc}(\Omega,F)$.

If $\dq_{s/w}$ denotes the operator $\dq_s$, then the claim follows now as in Theorem \ref{thm:duality1}, \eqref{eq:duality02},
because \eqref{eq:new51} implies that
\begin{eqnarray*}
(\dq_{max} g,\o{*}_{F^*}\omega)_{\Omega^*} &=&  \pm \int_{\Omega^*} \dq_w g\wedge \omega
= \pm \int_{\Omega^*} \dq_w^2 g\wedge \lambda=0
\end{eqnarray*}
for all $g\in \Dom\dq_{max} \cap L^{p,q-1}(\Omega^*,F)$.
Similarly, if $\dq_{s/w}$ denotes the operator $\dq_w$,
then the claim follows as in Theorem \ref{thm:duality2}, \eqref{eq:duality06}.
\end{proof}

\medskip
We will now prove that, conversely, the Hausdorff property of $H^{p,q+1}_{w/s,cpt}(\Omega,F^*)$
implies the Hausdorff property for $H^{n-p,n-q}_{s/w,loc}(\Omega,F)$.
This is more complicated because $L^{p,q}_{cpt}(\Omega,F^*)$ is just an (LF)-space,
and for such spaces it is not known whether closed subspaces or quotient spaces (modulo closed subspaces)
are again (LF)-spaces. So, we do not have a statement analogous to Lemma {\ref{lem:openmapping},
and we must find a way around this. Nevertheless, we can show:

\medskip
\begin{lem}\label{lem:range2}
If $\dq_{w/s}: \Dom\dq_{w/s}\cap L^{p,q}_{cpt}(\Omega,F^*) \rightarrow L^{p,q+1}_{cpt}(\Omega,F^*)$ has closed range,
then 
\begin{eqnarray}\label{eq:new100}
\dq_{s/w}:\ \Dom\dq_{s/w}\cap L^{n-p,n-q-1}_{loc}(\Omega,F) \longrightarrow L^{n-p,n-q}_{loc}(\Omega,F)
\end{eqnarray}
has closed range, too.
\end{lem}

\begin{proof}
Let $K_1 \subset\subset K_2\subset\subset K_3 ...$ be a compact exhaustion of $\Omega$.
So, the increasing sequences of Hilbert spaces
$\{L^{p,q}_{K_j}(\Omega,F^*)\}_j$ and $\{L^{p,q+1}_{K_j}(\Omega,F^*)\}_j$, respectively,
are defining sequences
for the (LF)-spaces $L^{p,q}_{cpt}(\Omega,F^*)$ and $L^{p,q+1}_{cpt}(\Omega,F^*)$, respectively.
Note that $L^{r,s}_{K_j}(\Omega,F^*) \cong L^{r,s}(K_j^*,F^*)$ by trivial extension of forms.

Let $\Gamma_{w/s}$ be the graph of \eqref{eq:new100}. As $\dq_{w/s}$ is a closed operator,
it is a closed subspace
of the (LF)-space $L^{p,q}_{cpt}(\Omega,F^*) \times L^{p,q+1}_{cpt}(\Omega,F^*)$.
Hence, we obtain an exhausting sequence of Fr{\'e}chet spaces 
\begin{eqnarray*}
E_\mu &:=& \Gamma_{w/s} \cap L^{p,q}_{K_\mu}(\Omega,F^*) \times L^{p,q+1}_{K_\mu}(\Omega,F^*)
\end{eqnarray*}
of $\Gamma_{w/s}$. Moreover, as $\Im \dq_{w/s}$ is closed,
\begin{eqnarray*}
F_\nu &:=& \Im \dq_{w/s} \cap L^{p,q+1}_{K_\nu}(\Omega,F^*)
\end{eqnarray*}
is a sequence of Fr{\'e}chet spaces exhausting $\Im\dq_{w/s}$.
Consider now the projection operator
\begin{eqnarray*}
\pi^{q+1}: && L^{p,q}_{cpt}(\Omega,F^*) \times L^{p,q+1}_{cpt}(\Omega,F^*) \longrightarrow  L^{p,q+1}_{cpt}(\Omega,F^*),
\end{eqnarray*}
and let
\begin{eqnarray*}
u:= \pi^{q+1}|_{\Gamma_{w/s}}: && \Gamma_{w/s} \longrightarrow \Im\dq_{w/s},
\end{eqnarray*}
which is clearly continuous and surjective. Now let
\begin{eqnarray}
G_{\mu\nu} &:=& E_\mu \cap u^{-1}(F_\nu),
\end{eqnarray}
which are again Fr{\'e}chet spaces with the induced topology because $u$ is continuous
(so that $u^{-1}(F_\nu)$ is closed).

We now follow the main argument of the proof of the open mapping theorem for (LF)-spaces
in \cite{DS}, Theorem 1. As $u(G_{\mu\nu})=u(E_\mu)\cap F_\nu$ and $u(\Gamma_{w/s})=\Im\dq_{w/s}$,
it follows that
\begin{eqnarray*}
F_\nu &=& \bigcup_\mu u(G_{\mu\nu}).
\end{eqnarray*}
But then at least one of the sets $u(G_{\mu\nu})$ must be of the second category (nonmeager)
in $F_\nu$ because $F_\nu$ is a Fr{\'e}chet space. For any $\nu$, choose as $\mu(\nu)$
the minimal $\mu$ such that this is the case.
Then it follows by the open mapping theorem (see e.g. \cite{Rd}, 2.11), that
\begin{eqnarray}\label{eq:open01}
u|_{G_{\mu(\nu)\nu}}: && G_{\mu(\nu)\nu} \longrightarrow F_\nu
\end{eqnarray}
is surjective and open.

Let us consider now the operator
\begin{eqnarray*}
\o{u}: && \Gamma_{w/s}/\ker\dq_{w/s} \longrightarrow \Im\dq_{w/s},
\end{eqnarray*}
which is again continuous, and define (as in Lemma \ref{lem:openmapping}) the $\dq$-solution operator
\begin{eqnarray*}
L=\o{\pi^{q}} \circ \o{u}^{-1}: && \Im\dq_{w/s} \longrightarrow \frac{\Gamma_{w/s}}{\ker\dq_{w/s}} 
\longrightarrow \frac{\Dom\dq_{w/s} \cap L^{p,q}_{cpt}(\Omega,F^*)}{\ker \dq_{w/s}},
\end{eqnarray*}
where the continuous operator $\o{\pi^q}$ is induced from the projection $\pi^q$ 
of $L^{p,q}_{cpt}(\Omega,F^*)\times L^{p,q}_{cpt}(\Omega,F^*)$ on the first factor.
It is now not clear whether $L$ is continuous because we cannot apply the open mapping theorem to $u$
as we do not know whether $\Im \dq_{w/s}$ is again an (LF)-space with the induced topology.
But, at least, by use of \eqref{eq:open01}
\begin{eqnarray}\label{eq:open02}
L_\nu:=L|_{F_\nu}:  \Im\dq_{w/s} \cap L^{p,q+1}_{K_\nu}(\Omega,F^*) = F_\nu 
\longrightarrow \frac{\Dom\dq_{w/s} \cap L^{p,q}_{K_{\mu(\nu)}}(\Omega,F^*)}{\ker \dq_{w/s}}
\end{eqnarray}
is a continuous linear Hilbert space operator for each $\nu$.
This is an interesting observation. \eqref{eq:open02} implies that given a compact set $K\subset\subset \Omega$,
there exists another compact set $K'\subset\subset \Omega$, such that for any $f\in \Im\dq_{w/s} \cap L^{p,q+1}_{K}(\Omega,F^*)$,
there exists $g\in L^{p,q}_{K'}(\Omega,F^*)$ with $\dq_{w/s} g=f$.

\medskip
Let now $\omega\in\o{\Im\dq_{s/w}} \subset L^{n-p,n-q}_{loc}(\Omega,F)$. We will show that $\omega\in\Im\dq_{s/w}$.
By Theorem \ref{thm:duality}, 
$\omega$ represents a continuous linear functional on $L^{p,q}_{cpt}(\Omega,F^*)$,
and by partial integration, as in the proof of Theorem \ref{thm:duality1},
one sees that this functional vanishes on $\ker\dq_{w/s} \cap L^{p,q}_{cpt}(\Omega,F^*)$ because $\dq_{s/w}\omega=0$.

Thus, $\omega$ represents a continuous linear functional
\begin{eqnarray}\label{eq:range01b}
\o{\omega}:\ \frac{L^{p,q}_{cpt}(\Omega,F^*)}{\ker \dq_{w/s}} \rightarrow \C\ ,\ [\eta] \mapsto \int_{\Omega^*} \eta\wedge \omega,
\end{eqnarray}
and we obtain a linear operator
\begin{eqnarray}\label{eq:range02b}
\Lambda:=\o{\omega} \circ L:\ \Im\dq_{w/s} \cap L^{p,q+1}_{cpt}(\Omega,F^*)
\rightarrow \C\ ,\ f\mapsto \int_{\Omega^*} L f\wedge \omega.
\end{eqnarray}
Again, we do not know whether $\Lambda$ is continuous, but by use of \eqref{eq:open02},
the restrictions
\begin{eqnarray}\label{eq:range03b}
\Lambda_\nu:=\Lambda|_{F_\nu}=\o{\omega} \circ L_\nu:\ \Im\dq_{w/s} \cap L^{p,q+1}_{K_\nu}(\Omega,F^*)
\rightarrow \C\ ,\ f\mapsto \int_{\Omega^*} L_\nu f\wedge \omega.
\end{eqnarray}
are continuous.

We wish to extend $\Lambda$ to a continuous linear functional on $L^{p,q+1}_{cpt}(\Omega,F)$.
This cannot be done in just one step because we do not know whether $\Lambda$ is continuous.
But we can use the Hahn-Banach theorem to extend $\Lambda_1$ to a continuous linear functional $\Lambda_1''$
on $L^{p,q+1}_{K_1}(\Omega,F)$. As $\Lambda_2|_{F_1}=\Lambda_1$,
we can extend $\Lambda_2$ by use of $\Lambda_1''$ to a continuous linear operator
$\Lambda_2'$ on the span of $L^{p,q+1}_{K_1}(\Omega,F)$ and $F_2$ in $L^{p,q+1}_{cpt}(\Omega,F)$.
Now use again Hahn-Banach to extend $\Lambda_2'$ to a continuous linear functional $\Lambda_2''$
on $L^{p,q+1}_{K_2}(\Omega,F)$. Going on inductively, we obtain a linear functional $\Lambda''$ on $L^{p,q+1}_{cpt}(\Omega,F)$
which is continuous on $L^{p,q+1}_{K_\nu}(\Omega,F)$ for all $\nu$.
So, $\Lambda''$ is by \cite{DS}, Proposition 5, a continuous linear extension of $\Lambda$.
This shows that $\Lambda$ is actually continuous.
By Theorem \ref{thm:duality}, $\Lambda''$ is represented by a form $\lambda \in L^{n-p,n-q-1}_{loc}(\Omega,F)$.

\medskip
We claim that $\lambda \in\Dom\dq_{s/w}$ with $\dq_{s/w} \lambda = \omega$.
To see that, note that it follows from \eqref{eq:range01b} and \eqref{eq:range02b} that
\begin{eqnarray}\label{eq:new41}
\int_{\Omega^*} \eta\wedge \omega = \int_{\Omega^*} \dq_{w/s} \eta \wedge \lambda
\end{eqnarray}
for all $\eta\in \Dom\dq_{w/s} \cap L^{p,q}_{cpt}(\Omega,F^*)$.
If $\dq_{s/w}$ denotes the operator $\dq_s$, then the claim follows now as in Theorem \ref{thm:duality1}, \eqref{eq:duality02}.
If $\dq_{s/w}$ denotes the operator $\dq_w$, then the claim follows as in Theorem \ref{thm:duality1}, \eqref{eq:duality04}.
\end{proof}

\medskip
\subsection{Proof of Theorem \ref{thm:main1}}

The proof of Theorem \ref{thm:main1} is now trivial.
Assume that $H^{p,q}_{w/s,loc}(\Omega,F)$ and $H^{p,q+1}_{w/s,loc}(\Omega,F)$ are Hausdorff.
Then $H^{n-p,n-q}_{s/w,cpt}(\Omega,F)$ is also Hausdorff by Lemma \ref{lem:range},
and so Theorem \ref{thm:duality1} and Theorem \ref{thm:duality2} give Theorem \ref{thm:main1}.

\medskip
\subsection{Topology of compact convergence on canonical sheaves}

Recall from Section \ref{ssec:complexes}
the canonical sheaves $\mathcal{K}_X = \ker \dq_w \subset \mathcal{C}^{n,0}$ and $\mathcal{K}_X^s=\ker\dq_s \subset \mathcal{F}^{n,0}$.
So, for any open set $\Omega \subset X$, 
$$\mathcal{K}_X(\Omega) = \ker\dq_w \cap L^{n,0}_{loc}(\Omega)$$
and
$$\mathcal{K}_X^s(\Omega) = \ker\dq_s \cap L^{n,0}_{loc}(\Omega)$$
carry the induced Fr\'echet space structure of $L^2$-convergence on compact subsets of $\Omega$.
It is clear that restriction maps are continuous, and so $\mathcal{K}_X$ and $\mathcal{K}_X^s$
are Fr\'echet sheaves according to Definition \ref{defn:fs}.

On the other hand, the Grauert-Riemenschneider canonical sheaf $\mathcal{K}_X$ is a coherent analytic sheaf.
So, $\mathcal{K}_X$ carries also the unique Fr\'echet sheaf structure of uniform convergence on compact subsets according
to Theorem \ref{thm:fs1} and Theorem \ref{thm:fs2}.
For an open set $\Omega\subset X$, let $\mathcal{K}_X(\Omega)$ carry the Fr\'echet space structure of $L^2$-convergence on compact subsets,
and $\widehat{\mathcal{K}_X}(\Omega)$ be the same algebraic vector space with the Fr\'echet space structure of uniform convergence on compact subsets.
Then the identity map
$$\widehat{\mathcal{K}_X}(\Omega) \longrightarrow \mathcal{K}_X(\Omega)$$
is bijective and continuous. So, the two Fr\'echet space structures coincide by the open mapping theorem.
To be more precise: It is enough to study the question locally (i.e. for small $\Omega$).
So, consider an epimorphism $\beta: \OO^N_X(\Omega) \rightarrow \mathcal{K}_X(\Omega)$, where $\OO^N_X$ carries the unique
Fr\'echet sheaf structure of uniform convergence on compact subsets. This map $\beta$ is clearly continuous.
So, by the open mapping theorem, the Fr\'echet space structures of $\mathcal{K}_X(\Omega)$ and $\OO^N_X(\Omega) / \ker\beta$
coincide. But $\OO^N_X(\Omega) / \ker\beta \cong \widehat{\mathcal{K}_X}(\Omega)$ topologically by Theorem \ref{thm:fs1}.

Concerning the canonical sheaf with boundary condition, $\mathcal{K}_X^s$,
the same argument holds as soon as we knew that the sheaf is coherent.
By now, this is just proven in the case of isolated singularities (see \cite{eins}, Theorem 1.10).
So, we summarize:

\begin{thm}\label{thm:topologies1}
Let $X$ be a Hermitian complex space. Then, on the Grauert-Riemen\-schneider canonical sheaf $\mathcal{K}_X$,
the Fr\'echet sheaf structure of $L^2$-convergence on compact subsets and the Fr\'echet sheaf structure of
uniform convergence on compact subsets coincide.

If $X$ has only isolated singularities, then the same holds for the canonical sheaf with Dirichlet boundary condition,
$\mathcal{K}_X^s$.
\end{thm}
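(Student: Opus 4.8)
The plan is to compare the two Fréchet structures by realizing both as quotients of one and the same auxiliary Fréchet space and then invoking the open mapping theorem, so that one–sided continuity is automatically upgraded to a homeomorphism. Since the coincidence of Fréchet sheaf structures is a local question — both structures have continuous restriction maps, and by Theorem \ref{thm:fs2} the topology on an arbitrary open set is determined by restriction to small sets — I would first reduce to proving coincidence over a neighborhood basis of small Stein sets $\Omega$, the passage to arbitrary open sets being routine. On such an $\Omega$ the $L^2$-structure lives on $\mathcal{K}_X(\Omega)=\ker\dq_w\subset\Gamma^{n,0}_{w,loc}(\Omega)$, a closed subspace of a Fréchet space and hence itself Fréchet; I write $\widehat{\mathcal{K}_X}(\Omega)$ for the same vector space carrying the canonical coherent-sheaf topology of uniform convergence on compacts from Theorem \ref{thm:fs1}.

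The technical heart is to produce a continuous surjection onto the $L^2$-space from a space whose topology is understood explicitly. Since $\mathcal{K}_X$ is coherent, over a small Stein $\Omega$ it is generated as an $\OO_X$-module by finitely many sections $\omega_1,\dots,\omega_N\in\mathcal{K}_X(\Omega)$, which are in particular square-integrable on compact subsets. I then consider the $\OO_X$-module epimorphism
\[
\beta:\OO_X^N(\Omega)\longrightarrow\mathcal{K}_X(\Omega)\ ,\quad (g_1,\dots,g_N)\mapsto\sum_{i=1}^N g_i\,\omega_i,
\]
which is surjective on sections over Stein $\Omega$ by Cartan's Theorem~B applied to the coherent kernel $\ker\beta$. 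Equipping $\OO_X^N(\Omega)$ with its uniform-convergence topology (it is Fréchet by Theorem \ref{thm:fs1}), the key estimate is the elementary bound
\[
p_K\Big(\sum_i g_i\,\omega_i\Big)\ \le\ \sum_i\Big(\sup_K|g_i|\Big)\,p_K(\omega_i),
\]
for each compact $K\subset\Omega$, where $p_K(\cdot)=\big(\int_{K^*}|\cdot|^2\,dV_X\big)^{1/2}$ is the $L^2$-seminorm of \eqref{eq:seminorms}. Because the $\omega_i$ are fixed $L^2$-forms, this shows that $\beta$ is continuous as a map into $\mathcal{K}_X(\Omega)$ with the $L^2$-topology. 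This continuity of $\beta$ into the $L^2$-structure is the one place where the analytic ($L^2$) data enter; everything else is formal.

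Now both $\OO_X^N(\Omega)$ and $(\mathcal{K}_X(\Omega),L^2)$ are Fréchet, so the open mapping theorem turns the continuous surjection $\beta$ into a topological homomorphism. Hence the $L^2$-topology on $\mathcal{K}_X(\Omega)$ coincides with the quotient topology $\OO_X^N(\Omega)/\ker\beta$. On the other hand, by Theorem \ref{thm:fs1} the canonical coherent-sheaf topology $\widehat{\mathcal{K}_X}(\Omega)$ is, by its very construction, the quotient of the uniform-convergence topology on $\OO_X^N(\Omega)$ by the same subspace $\ker\beta$. Since both topologies arise as the quotient of one and the same topology by one and the same subspace, they are identical; propagation from the neighborhood basis to arbitrary open sets (via Theorem \ref{thm:fs2}) gives the first assertion.

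For the second assertion I would run the identical argument with $\dq_s$ in place of $\dq_w$ and $\mathcal{K}_X^s=\ker\dq_s\subset\mathcal{F}^{n,0}$ in place of $\mathcal{K}_X$: the target $\mathcal{K}_X^s(\Omega)=\ker\dq_s\subset\Gamma^{n,0}_{s,loc}(\Omega)$ is again a closed subspace of a Fréchet space, the finitely many module generators still lie in $L^2$ (indeed $\mathcal{K}_X^s\subset\mathcal{K}_X$), and the seminorm estimate above is unchanged. The one extra ingredient needed is that $\mathcal{K}_X^s$ be a \emph{coherent} analytic sheaf, so that it carries a canonical Fréchet-sheaf structure and admits the local epimorphism $\beta$; this is exactly where I must restrict to isolated singularities, invoking the coherence result \cite{eins}, Theorem 1.10. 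I expect this coherence to be the main obstacle: once it is granted, the $\mathcal{K}_X^s$-case is word-for-word the $\mathcal{K}_X$-case, which is precisely why the statement for $\mathcal{K}_X^s$ is confined to the isolated-singularity setting.
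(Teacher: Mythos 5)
Your proposal is correct and follows essentially the same route as the paper: both present $\mathcal{K}_X(\Omega)$ locally via an epimorphism $\beta:\OO_X^N(\Omega)\to\mathcal{K}_X(\Omega)$, observe that $\beta$ is continuous into the $L^2$-topology, and invoke the open mapping theorem to identify the $L^2$-structure with the quotient topology $\OO_X^N(\Omega)/\ker\beta$, which is the canonical coherent-sheaf topology; the $\mathcal{K}_X^s$-case likewise reduces to the coherence result of \cite{eins}, Theorem 1.10. Your explicit seminorm estimate merely spells out the continuity of $\beta$ that the paper asserts as clear.
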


\medskip
\subsection{Topological equivalence of $L^2$-cohomology and \v{C}ech cohomology}\label{ssec:topologies}

Let $X$ be a reduced complex space. Then any coherent analytic sheaf $\mathcal{S}\rightarrow X$ carries
a unique canonical Fr\'echet sheaf structure (with the topology of uniform compact convergence, see Section \ref{ssec:fs}).

By taking a Leray covering,
this induces a canonical Fr\'echet space topology on the \v{C}ech cohomology groups $\check{H}^q(\Omega,\mathcal{S})$
for $\Omega\subset X$ open.
We will now show that this Fr\'echet space topology coincides with our $L^2$-topology
when we have a suitable Dolbeault isomorphism. The central tool needed for that is the following statement:

\begin{lem}\label{lem:topologies}{\bf (\cite{RR}, Lemma 1)}
Let $A^*$ and $B^*$ be two complexes of Fr\'echet spaces
with continuous linear differentials, and let $u: A^* \rightarrow B^*$ be a continuous linear morphism
from $A^*$ to $B^*$. If $u$ is an algebraic quasi-isomorphism,
i.e., if it induces an algebraic isomorphism of the cohomology groups of the complexes,
then $u$ is also a topological quasi-isomorphism,
i.e. it induces a topological isomorphism of the cohomology groups carrying
their natural induced topology.
\end{lem}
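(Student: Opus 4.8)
The plan is to reduce the statement to the topological exactness of the mapping cone of $u$ and then invoke the open mapping theorem. Write $u=(u^q)$ with $u^q\colon A^q\to B^q$ and form the mapping cone $C^\ast$ with $C^q:=A^{q+1}\oplus B^q$ and differential
\begin{eqnarray*}
\delta^q\colon C^q\to C^{q+1}\ ,\ (a,b)\mapsto \big({-}d_A a,\,u a + d_B b\big).
\end{eqnarray*}
Since $u$, $d_A$, $d_B$ are continuous and the $A^q$, $B^q$ are Fr\'echet spaces (resp.\ $(LF)$-spaces), each $C^q$ is again a Fr\'echet space (resp.\ an $(LF)$-space), $\delta^q$ is continuous and $\delta^{q+1}\circ\delta^q=0$. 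One has the short exact sequence of complexes
\begin{eqnarray*}
0\to B^\ast \overset{\iota}{\to} C^\ast \overset{p}{\to} A^\ast[1]\to 0\ ,\ \iota(b)=(0,b)\ ,\ p(a,b)=a,
\end{eqnarray*}
whose connecting homomorphism is $\pm u_\ast$. Hence $u$ is an algebraic quasi-isomorphism if and only if $H^q(C^\ast)=0$ for all $q$, i.e.\ $C^\ast$ is acyclic.

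Next I would upgrade the algebraic exactness of $C^\ast$ to topological exactness. Each $\delta^q$ is continuous, so $\ker\delta^q$ is a closed subspace of $C^q$; acyclicity gives $\Im\delta^{q-1}=\ker\delta^q$, so in particular $\Im\delta^{q-1}$ is closed. Therefore the induced map $C^{q-1}/\ker\delta^{q-1}\to\ker\delta^q$ is a continuous linear bijection of Fr\'echet spaces (resp.\ of $(LF)$-spaces), and by the open mapping theorem for Fr\'echet spaces, resp.\ for $(LF)$-spaces (see \cite{DS}, Theorem 1), it is a topological isomorphism. Thus every $\delta^{q-1}$ is a topological homomorphism onto its (closed) image $\ker\delta^q$.

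It then remains to transfer this to $u_\ast$. Continuity and bijectivity of $u_\ast\colon H^q(A^\ast)\to H^q(B^\ast)$ for the quotient topologies are immediate: $u$ is continuous and carries cocycles to cocycles and coboundaries to coboundaries, and bijectivity is the hypothesis. The content is the continuity of the inverse, which I would obtain from the openness of $\delta^{q-1}$ as follows. Given a cocycle $b\in Z_B^q:=\ker d_B^q$, the pair $(0,b)$ lies in $\ker\delta^q=\Im\delta^{q-1}$ and depends linearly and continuously on $b$; by the topological isomorphism above it has a preimage $[(a,b')]\in C^{q-1}/\ker\delta^{q-1}$ depending continuously on $b$. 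The relation $(0,b)=\delta^{q-1}(a,b')$ forces $d_A a=0$ and $b-ua=d_B b'$, so the $A$-component $a$ is a cocycle with $u_\ast[a]=[b]$. Tracking this construction through the continuous coordinate projection $C^{q-1}\to A^q$ exhibits $[b]\mapsto[a]$ as a continuous inverse of $u_\ast$, which is what we wanted.

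The main obstacle is the correct application of the open mapping theorem in the $(LF)$-setting. In contrast to the Fr\'echet case, a closed subspace or a quotient of an $(LF)$-space need not again be an $(LF)$-space, so before applying \cite{DS}, Theorem 1, one must verify that the spaces $\ker\delta^q$ and $C^{q-1}/\ker\delta^{q-1}$ are genuinely $(LF)$-spaces (or invoke a more general open mapping theorem for webbed spaces). For the concrete complexes of this paper this is harmless, since the relevant spaces are strict inductive limits of the Fr\'echet spaces $L^{r,s}_{K_j}$ cut out by support and $\dq$-conditions, hence again strict $(LF)$-spaces, so the Dieudonn\'e--Schwartz theorem applies. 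I should also emphasize that the cohomology groups carry their quotient topology and need not be Hausdorff; accordingly \emph{topological isomorphism} is understood as a continuous linear bijection with continuous inverse, and no Hausdorffness enters the argument.
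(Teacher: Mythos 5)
Your proof is correct, and it is essentially the argument that the paper outsources: the paper's own ``proof'' consists of citing Ramis--Ruget \cite{RR}, Lemma 1, for the Fr\'echet case and remarking that the argument persists for $(LF)$-spaces since the open mapping theorem still holds; your mapping-cone reduction (acyclicity of the cone, openness of $\delta^{q-1}$ onto its closed image via the open mapping theorem, then extraction of a continuous inverse of $u_*$ through the coordinate projection) is the standard proof of that cited lemma, so you have simply supplied the details rather than taken a different route. Two small remarks: the well-definedness of $[(a,b')]\mapsto[a]$ on $C^{q-1}/\ker\delta^{q-1}$ uses the injectivity of $u_*$ (the $A$-component of an element of $\ker\delta^{q-1}$ is a cocycle killed by $u_*$, hence a coboundary), which you have by hypothesis but should state; and your caveat about closed subspaces and quotients of $(LF)$-spaces not being $(LF)$-spaces in general is well taken --- it is a point the paper itself glosses over with ``the open mapping theorem is valid for such spaces,'' and your fallback to a webbed-space open mapping theorem (or the verification for the concrete strict inductive limits occurring here) is the right way to close it.
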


\medskip
Let $X$ be a (possibly singular) Hermitian complex space (this includes particularly the case of a Hermitian complex manifold)
and consider the fine resolution
\begin{eqnarray*}\label{eq:complex1b}
0\rightarrow \mathcal{K}_X \longrightarrow \mathcal{C}^{n,0} \overset{\dq_w}{\longrightarrow}
\mathcal{C}^{n,1} \overset{\dq_w}{\longrightarrow} \mathcal{C}^{n,2} \overset{\dq_w}{\longrightarrow} ...
\end{eqnarray*}
according to Theorem \ref{thm:exactness1}. Let $\Omega\subset X$ be an open set.
Then the abstract DeRham-isomorphism 
\begin{eqnarray}\label{eq:iso111}
\check{H}^q(\Omega,\mathcal{K}_X) \cong H^q\big(\Gamma(\Omega,\mathcal{C}^{n,*})\big) 
=  H^{n,q}_{w,loc}(\Omega)
\end{eqnarray}
can be realized explicitly (cf. \cite{De1}, IV.6 for the following).
Let $\mathcal{U}=\{U_\alpha\}$ be a Leray covering for $\Omega$,
and let $\{\chi_\alpha\}$ be a smooth partition of unity subordinate to $\mathcal{U}$.
Given a \v{C}ech cocycle $c\in C^q(\mathcal{U},\mathcal{K}_X)$, we define a \v{C}ech coycle $f\in C^0(\mathcal{U},\mathcal{C}^{n,q})$
by
\begin{eqnarray*}
f_\alpha := \sum _{\nu_0, ..., \nu_{q-1}} \dq \chi_{\nu_0}\wedge ...\wedge \dq\chi_{\nu_{q-1}} \cdot c_{\nu_0 \cdots \nu_{q-1} \alpha} \ \ \ \mbox{ on }\ \ U_\alpha.
\end{eqnarray*}
In fact, $f$ is a cocycle and defines a $\dq$-closed global section
\begin{eqnarray*}\label{eq:construction}
\phi_c = \sum_{\nu_q} \chi_{\nu_q} f_{\nu_q} 
= \sum_{\nu_0, ..., \nu_q} \chi_{\nu_q}  \dq \chi_{\nu_0}\wedge ...\wedge \dq\chi_{\nu_{q-1}} \cdot c_{\nu_0 \cdots \nu_{q-1} \nu_q} 
\in \ker\dq_w \cap L^{n,q}_{loc} (\Omega).
\end{eqnarray*}
This mapping
\begin{eqnarray*}
\Psi: C^q(\mathcal{U},\mathcal{K}_X) \longrightarrow \ker\dq_w\cap L^{n,q}_{loc} (\Omega)\ ,\ c \mapsto \phi_c,
\end{eqnarray*}
is continuous as $C^q(\mathcal{U},\mathcal{K}_X)$ carries the Fr\'echet space structure of uniform convergence and
$L^{n,q}_{loc}(\Omega)$ the Fr\'echet space structure of $L^2$-convergence (the contribution of the partition of unity is harmless).

Taking into account also the topological isomorphism $\check{H}^q(\Omega,\mathcal{K}_X) \cong \check{H}^q(\mathcal{U},\mathcal{K}_X)$,
the algebraic isomorphism \eqref{eq:iso111} is then realized by the induced mapping
\begin{eqnarray*}
[\Psi]: \check{H}^q(\mathcal{U},\mathcal{K}_X) \overset{\cong}{\longrightarrow} H^{n,q}_{w,loc}(\Omega)\ ,\ [c]\mapsto [\phi_c].
\end{eqnarray*}
But now $[\Psi]$ is also a topological isomorphism by Lemma \ref{lem:topologies}.

Verbatim the same argument can be applied to $\mathcal{K}_X^s$ if $X$ has only isolated singularities (see Theorem \ref{thm:exactness2}).
Summing up, we get:

\begin{thm}\label{thm:topology}
Let $X$ be a possibly singular Hermitian complex space, $\Omega\subset X$ open. 
Then the Dolbeault isomorphism induces a topological isomorphism
\begin{eqnarray*}
\check{H}^q(\Omega,\mathcal{K}_X) \overset{\cong}{\longrightarrow} H^{n,q}_{w,loc}(\Omega),
\end{eqnarray*}
where the \v{C}ech cohomology carries the canonical Fr\'echet space structure of uniform convergence on compact subsets
and $H^{n,q}_{w,loc}(\Omega)$ the Fr\'echet space structure of $L^2$-convergence on compact subsets.

If $X$ has only isolated singularities, then the analogous statement holds for the Dolbeault isomorphism
\begin{eqnarray*}
\check{H}^q(\Omega,\mathcal{K}_X^s) \overset{\cong}{\longrightarrow} H^{n,q}_{s,loc}(\Omega).
\end{eqnarray*}
\end{thm}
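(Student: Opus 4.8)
The plan is to realize the abstract Dolbeault (De~Rham) isomorphism by an \emph{explicit continuous morphism of complexes} and then promote it to a topological isomorphism via Lemma~\ref{lem:topologies}. The ingredients already available are: the fine resolution $0\to\mathcal{K}_X\to\mathcal{C}^{n,*}$ of Theorem~\ref{thm:exactness1}; the fact that the $\dq_w$-differentials act as continuous linear operators on the Fr\'echet spaces $\Gamma^{n,q}_{w,loc}(\Omega)$ (established when $\dq_w$ was interpreted as a continuous operator on graphs); the coherence of $\mathcal{K}_X$ with its canonical Fr\'echet sheaf structure (Theorems~\ref{thm:fs1}, \ref{thm:fs2}); and, crucially, the coincidence of the $L^2$-convergence and uniform-convergence Fr\'echet structures on $\mathcal{K}_X$ (Theorem~\ref{thm:topologies1}).

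First I would fix a Leray covering $\mathcal{U}=\{U_\alpha\}$ of $\Omega$ for the coherent sheaf $\mathcal{K}_X$ together with a subordinate smooth partition of unity $\{\chi_\alpha\}$, so that $\check{H}^q(\Omega,\mathcal{K}_X)\cong\check{H}^q(\mathcal{U},\mathcal{K}_X)$ topologically (the standard refinement argument for coherent Fr\'echet sheaves). Using the usual partition-of-unity zig-zag, a \v{C}ech cocycle $c\in C^q(\mathcal{U},\mathcal{K}_X)$ is sent to the $\dq_w$-closed global section $\phi_c=\sum_{\nu_0,\dots,\nu_q}\chi_{\nu_q}\,\dq\chi_{\nu_0}\wedge\cdots\wedge\dq\chi_{\nu_{q-1}}\cdot c_{\nu_0\cdots\nu_q}\in\Gamma^{n,q}_{w,loc}(\Omega)$, yielding a morphism of complexes $\Psi\colon C^*(\mathcal{U},\mathcal{K}_X)\to\Gamma^{n,*}_{w,loc}(\Omega)$ intertwining the \v{C}ech coboundary with $\dq_w$. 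The decisive point is continuity of $\Psi$: by Theorem~\ref{thm:topologies1} I may topologize the \v{C}ech groups by $L^2$-convergence on compacts, and then on any compact $K\subset\subset\Omega$ only finitely many $\nu$ contribute, while $\chi_\nu$ and $\dq\chi_\nu$ are uniformly bounded; hence $L^2$-control of $\phi_c$ on $K$ follows from $L^2$-control of the relevant components $c_{\nu_0\cdots\nu_q}$, the partition of unity being harmless.

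Next I would invoke the classical abstract De~Rham argument (conceptually, the edge maps of the double complex $C^p(\mathcal{U},\mathcal{C}^{n,q})$, whose rows are exact by Theorem~\ref{thm:exactness1} and whose columns are exact by fineness) to see that $[\Psi]\colon\check{H}^q(\mathcal{U},\mathcal{K}_X)\to H^q\big(\Gamma^{n,*}_{w,loc}(\Omega)\big)=H^{n,q}_{w,loc}(\Omega)$ is an algebraic isomorphism. Since $\Psi$ is a continuous morphism of complexes of Fr\'echet spaces with continuous differentials and is an algebraic quasi-isomorphism, Lemma~\ref{lem:topologies} upgrades $[\Psi]$ to a topological isomorphism, proving the first assertion. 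For the second assertion I would run the identical argument with $\mathcal{F}^{n,*}$ and $\dq_s$ in place of $\mathcal{C}^{n,*}$ and $\dq_w$, using the fine resolution of $\mathcal{K}_X^s$ from Theorem~\ref{thm:exactness2}.

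The main obstacle is the matching of topologies rather than any algebra: the \v{C}ech side is topologized through the canonical coherent (uniform-convergence) structure, whereas the Dolbeault side lives in $L^2$, and $\Psi$ is continuous only because these two structures agree on the canonical sheaf (Theorem~\ref{thm:topologies1}). This is precisely why the $\mathcal{K}_X^s$-statement must be restricted to isolated singularities: there $\mathcal{K}_X^s$ is known to be coherent, so the topology-coincidence half of Theorem~\ref{thm:topologies1} applies, while for non-isolated singularities coherence of $\mathcal{K}_X^s$ is not available and the comparison of the two Fr\'echet structures breaks down.
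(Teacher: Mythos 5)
Your proposal is correct and follows essentially the same route as the paper: the explicit partition-of-unity realization $\Psi\colon c\mapsto\phi_c$ of the De~Rham isomorphism, its continuity from the canonical Fr\'echet structure on $C^q(\mathcal{U},\mathcal{K}_X)$ to the $L^2$-structure on $\Gamma^{n,q}_{w,loc}(\Omega)$ (via Theorem~\ref{thm:topologies1}), and the upgrade from algebraic to topological quasi-isomorphism by Lemma~\ref{lem:topologies}. The only marginal difference is one of emphasis for the $\mathcal{K}_X^s$-case: besides the coherence needed for Theorem~\ref{thm:topologies1}, the restriction to isolated singularities is equally forced by the fact that the fine resolution of $\mathcal{K}_X^s$ (Theorem~\ref{thm:exactness2}) is only available there.
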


\smallskip
It is just for ease of notation that we did not incorporate a Hermitian line bundle $F\rightarrow X$ in this section and the last one.
Note that all what has been said holds as well for forms with values in such a line bundle.

\medskip
\subsection{About the Hausdorff property}
\label{ssec:hausdorff}

In order to apply Theorem \ref{thm:main1}, we need to discuss the Hausdorff property for
the cohomology groups in the continuous algebraic isomorphisms \eqref{eq:new11}, \eqref{eq:new12},
\eqref{eq:new11b} and \eqref{eq:new12b} from Section \ref{ssec:resolution}.

Let us first discuss this for \eqref{eq:new11} and the trivial bundle $F$, i.e., for
\begin{eqnarray}\label{eq:new21}
H^{n,q}_{w,loc}(\Omega) = H^q\big( \Gamma(\Omega,\mathcal{C}^{n,*})\big) \overset{\cong}{\longrightarrow} 
H^q\big(\Gamma(\pi^{-1}(\Omega),\mathcal{C}^{n,*})\big) = H^{n,q}_{w,loc}\big(\pi^{-1}(\Omega)\big).
\end{eqnarray}
The algebraic isomorphism \eqref{eq:new11b} is treated completely analogous
(and the line bundle does not matter at all).
We have to consider the continuous linear morphism of complexes
\begin{eqnarray*}
\pi^*: \big(\mathcal{C}^{n,*}(\Omega),\dq_w\big) \longrightarrow \big(\mathcal{C}^{n,*}(\pi^{-1}(\Omega)),\dq_w\big)
\end{eqnarray*}
which induces the algebraic isomorphism \eqref{eq:new21}.
Hence, \eqref{eq:new21} is also a topological isomorphism by Lemma \ref{lem:topologies},
and we note:

\begin{lem}\label{lem:topology2}
The algebraic isomorphisms \eqref{eq:new11} and \eqref{eq:new11b}
from Section \ref{ssec:resolution} are also topological isomorphisms.
\end{lem}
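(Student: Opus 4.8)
The plan is to reduce every one of the four isomorphisms to an application of Lemma \ref{lem:topologies}, exactly as was just carried out for \eqref{eq:new11} with the trivial bundle. Since the \emph{algebraic} isomorphism statements are already the content of Section \ref{ssec:resolution}, the only things left to supply are a continuous morphism of complexes inducing each isomorphism, together with the correct ambient functional-analytic structure (Fréchet in the \emph{loc} cases, (LF) in the \emph{cpt} cases). I would first peel off a step that is common to all four: the right-hand Dolbeault isomorphisms
\[
H^{n,q}_{w,loc/cpt}\big(\pi^{-1}(\Omega),\pi^* F\big) \cong H^q_{(cpt)}\big(\pi^{-1}(\Omega),\mathcal{K}_M(\pi^* F)\big)
\]
are \emph{already} topological by Theorem \ref{thm:topology} applied to the \emph{smooth} Hermitian space $M$ (on which $\dq_w=\dq_s$ and $\mathcal{K}_M(\pi^* F)$ is coherent). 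Thus it suffices to treat the left-hand pull-back maps.

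For those I would invoke the morphisms of complexes $\pi^*$ already exhibited in \eqref{eq:new4} and \eqref{eq:new4b}. Passing to sections over $\Omega$ (respectively to sections with compact support) turns them into linear morphisms
\[
\pi^*: \Gamma^{n,*}_{w,loc/cpt}(\Omega,F) \longrightarrow \Gamma^{n,*}_{w,loc/cpt}\big(\pi^{-1}(\Omega),\pi^* F\big)
\]
for \eqref{eq:new11}, \eqref{eq:new12}, and
\[
\pi^*: \Gamma^{n,*}_{s,loc/cpt}(\Omega,F) \longrightarrow \Gamma^{n,*}_{w,loc/cpt}\big(\pi^{-1}(\Omega),\pi^* F\big)
\]
for \eqref{eq:new11b}, \eqref{eq:new12b}. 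The domains and targets are Fréchet spaces in the \emph{loc} cases and (LF)-spaces in the \emph{cpt} cases, and the differentials $\dq_w$, $\dq_s$ are continuous on these $\Gamma$-complexes by the construction in Section \ref{sec:serre}. Continuity of $\pi^*$ itself rests on the assertion recorded in Section \ref{ssec:resolution} that pull-back of $(n,q)$-forms preserves square-integrability and commutes with $\dq_w$: concretely, in top holomorphic degree the pull-back is bounded in $L^2$-norm over compact subsets, so $\pi^*$ is continuous for the topology of compact $L^2$-convergence. In the \emph{cpt} cases one checks in addition that, since $\pi$ is proper, pull-back sends forms supported in $K \subset\subset \Omega$ to forms supported in $\pi^{-1}(K) \subset\subset \pi^{-1}(\Omega)$ and is continuous on each step $\mathcal{D}_K$, which is precisely what is needed for continuity between the (LF)-spaces.

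With these continuous morphisms of complexes in place, the algebraic quasi-isomorphism property is exactly \eqref{eq:new11}--\eqref{eq:new12b}, so Lemma \ref{lem:topologies} (in its Fréchet form for \emph{loc} and its (LF) form for \emph{cpt}) promotes each induced map on cohomology to a topological isomorphism; composing with the already-topological Dolbeault isomorphisms of the first paragraph yields the full statements, and the Hermitian line bundle $F$ plays no role throughout. The one genuinely technical point -- and hence the main obstacle -- is the $L^2$-continuity of $\pi^*$ across the exceptional set, together with its compatibility with the inductive-limit structure in the compactly supported cases; but since this continuity is already subsumed in the pull-back estimates of Section \ref{ssec:resolution}, no new computation is required, and the argument is purely formal once Lemma \ref{lem:topologies} is available.
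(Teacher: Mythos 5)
Your proposal is correct and takes essentially the same route as the paper: the paper likewise exhibits $\pi^*$ as a continuous linear morphism of the $\Gamma$-complexes inducing the known algebraic isomorphism and then invokes Lemma \ref{lem:topologies} to upgrade it to a topological isomorphism, treating the remaining three cases as completely analogous. The extra details you supply (the $L^2$-continuity of pull-back on compact subsets, properness of $\pi$ for the compactly supported cases, and handling the Dolbeault identifications on $M$ via Theorem \ref{thm:topology}) are points the paper leaves implicit.
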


\medskip
Let us now discuss also the algebraic isomorphisms \eqref{eq:new12} and \eqref{eq:new12b}.
Here, we have to deal with quotient spaces of (LF)-spaces and it is not clear whether
Lemma \ref{lem:topologies} is valid for (LF)-spaces (the quotient space of an (LF)-space by a closed
subspace may not be an (LF)-space with the induced topology).

But we can use the following simple observation:

\begin{prop}\label{prop:hausdorff}
Let $f: A \rightarrow B$ be an injective continuous mapping between
topological spaces $A$ and $B$, and assume that $B$ is Hausdorff.

Then $A$ is Hausdorff, too.
\end{prop}

\begin{proof}
Let $x,y\in A$ with $x\neq y$. Then $f(x)\neq f(y)$ by injectivity of $f$, and there exist open neighbourhoods
$U_x$ and $U_y$ of $x$ and $y$ in $B$ such that $U_x\cap U_y=\emptyset$ because $B$ is Hausdorff.
But then $\pi^{-1}(U_x)$ and $\pi^{-1}(U_y)$ are distinct open neighbourhoods of $x$ and $y$.
\end{proof}

From this, it is easy to deduce at least:

\begin{lem}\label{lem:topology3}
If the right hand side of \eqref{eq:new12} or \eqref{eq:new12b}, respectively, 
is a Hausdorff topological vector space, then so is the left hand side.
\end{lem}

\smallskip
\subsection{Examples of separated cohomology groups and proof of Theorem \ref{thm:main2}}

If $Z$ is a compact complex space, 
then the Cartan-Serre theorem shows that the cohomology of coherent analytic sheaves on $Z$ is finite-dimensional,
in particular Hausdorff.
More generally,
we get the Hausdorff property if $Z$ is holomorphically convex:
in that case $\check{H}^*(Z,\mathcal{S})$ is Hausdorff for any coherent analytic sheaf $\mathcal{S}$
by \cite{P}, Lemma II.1
(to conclude the statement from \cite{P},
recall that a holomorphically convex space $Z$ has a (proper) Remmert reduction $\pi: Z \rightarrow Y$ such that $Y$ is Stein).

\medskip
Let us now prove Theorem \ref{thm:main2}. Let $X$ be a Hermitian complex space, $\dim X=n$,
$\Omega\subset X$ holomorphically convex and $\pi: M\rightarrow X$ a resolution of singularities.
Then $\pi^{-1}(\Omega)$ is again holomorphically convex and it follows as explained above that
\begin{eqnarray*}\label{eq:new30}
\check{H}^q(\Omega,\mathcal{K}_X)\ ,\ \check{H}^q(\Omega,\mathcal{K}^s_X)\ ,\ \check{H}^{n-q}\big(\pi^{-1}(\Omega),\OO_M\big)
\end{eqnarray*}
are Hausdorff for all $0\leq q\leq n$ 
(for $\mathcal{K}_X^s$, assume that $X$ has only isolated singularities so that $\mathcal{K}_X^s$ is coherent).

So, it follows from Theorem \ref{thm:topology} (which holds as well for the cohomology with values in $\OO_M$
on the smooth manifold $M$) that the $L^2$-cohomology groups
\begin{eqnarray}\label{eq:new31}
H^{n,q}_{w,loc}(\Omega)\ ,\ H^{n,q}_{s,loc}(\Omega)\ , \ H^{0,n-q}_{w,loc}\big(\pi^{-1}(\Omega)\big)
\end{eqnarray}
are Hausdorff for all $0\leq q\leq n$.

Thus, by Lemma \ref{lem:range},
\begin{eqnarray}\label{eq:new32}
H^{n,q}_{w,cpt}\big(\pi^{-1}(\Omega)\big) = H^{n,q}_{s,cpt}\big(\pi^{-1}(\Omega)\big)
\end{eqnarray}
is Hausdorff for all $0\leq q\leq n$ ($\dq_w$ and $\dq_s$ coincide on $M$ which has no singular set).

But now Lemma \ref{lem:topology3} yields that
\begin{eqnarray}\label{eq:new33}
H^{n,q}_{w,cpt}(\Omega)\ ,\ H^{n,q}_{s,cpt}(\Omega)
\end{eqnarray}
are Hausdorff for all $0\leq q\leq n$ (here, for the statement about the $\dq_s$-cohomology,
we have to assume that $X$ has only isolated homogeneous singularities, see \eqref{eq:new12b}).

From \eqref{eq:new31} and \eqref{eq:new33} we see that actually -- as claimed --
all the $H^{n,q}$-cohomology groups in the statement of Theorem \ref{thm:main2} are Hausdorff
(under the conditions imposed on the singularities of $X$ for the $\dq_s$-cohomology).

The dual $H^{0,n-q}$-cohomology groups are then Hausdorff by Lemma \ref{lem:range} and Lemma \ref{lem:range2}.
That completes the proof of Theorem \ref{thm:main2}.

\smallskip
There is another interesting example of Hausdorff cohomology.
Let $\Omega \subset X$ be $q$-convex. Then it follows by the Andreotti-Grauert theory that
$H^r(\Omega,\mathcal{S})$ is of finite dimension (hence Hausdorff)
for any coherent analytic sheaf $\mathcal{S}$ and all $r\geq q$.

\medskip
\section{Vanishing theorems for the $\dq$-cohomology on singular spaces}\label{sec:dqs}

\subsection{Proof of Theorem \ref{thm:vanishing1}}

Both statements follow simply from the combination of Theorem \ref{thm:topology}, Lemma \ref{lem:range}
and the two duality statements Theorem \ref{thm:duality1} and \ref{thm:duality2}.

\medskip
\subsection{On the domain of the $\dq_s$-operator}\label{ssec:domaindqs}

In view of our vanishing result, Theorem \ref{thm:vanishing1}, it would be good to understand the $\dq_s$-operator better,
but, in general, it is difficult to decide if a differential form is in the domain of the $\dq_s$-operator.
However, we have at least the following criterion:

\begin{thm}\label{thm:domaindqs2}
Locally bounded forms in the domain of the $\dq_w$-operator are also in the domain of the $\dq_s$-operator.
\end{thm}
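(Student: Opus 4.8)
The plan is to produce, for a given locally bounded $f\in\Dom\dq_w$, an explicit approximating sequence in the graph norm by forms supported away from $\Sing X$, following the cut-off technique of N.~Sibony. Since $\mathcal{F}^{p,q}(F)$ is a sheaf, membership in $\Dom\dq_s$ is a local condition, so I would fix $x_0\in\Sing X$, a small neighborhood $U$ with a biholomorphic embedding into a domain $G\subset\C^N$, and a relatively compact $U'\subset\subset U$, and show $f|_{U'}\in\Dom\dq_s(U')$. The approximating forms will be $f_\nu:=\chi_\nu f$, where $\{\chi_\nu\}$ is a sequence of Lipschitz cut-off functions vanishing in a shrinking neighborhood of $\Sing X$ and tending to $1$ on $U'^*$. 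A subsequent mollification by a Dirac sequence inside $U'^*$ turns the $f_\nu$ into smooth forms with compact support away from $\Sing X$ without affecting the limits, since Friedrichs mollification converges in $L^2$ and commutes with $\dq$ in the limit; this is precisely the reduction to smooth forms already sanctioned in Section~\ref{ssec:complexes}.

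On $\Dom\dq_w$ the Leibniz rule gives $\dq_w(\chi_\nu f)=\dq\chi_\nu\wedge f+\chi_\nu\,\dq_w f$. The two diagonal convergences $\chi_\nu f\to f$ and $\chi_\nu\,\dq_w f\to\dq_w f$ in $L^{p,q}(K^*,F)$ for every $K\subset\subset U'$ are immediate from dominated convergence, because $0\le\chi_\nu\le1$, $\chi_\nu\to1$ almost everywhere on $U'^*$, and $f,\dq_w f\in L^2_{loc}$. Everything thus reduces to the cross term, and here the hypothesis of boundedness is decisive: since $f$ is locally bounded, $\|\dq\chi_\nu\wedge f\|_{L^2(K^*)}\le(\sup_{K^*}|f|_F)\,\|\dq\chi_\nu\|_{L^2(K^*)}$, so it suffices to produce cut-offs with
\[
\|\dq\chi_\nu\|_{L^2(K^*)}\longrightarrow 0\qquad\text{for every }K\subset\subset U'.
\]
(For merely $L^2$ forms this cross term could not be controlled, which is why boundedness is assumed.) In other words, the task is to show that $\Sing X$ is $L^2$-negligible for the $\dq$-operator.

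To construct the $\chi_\nu$ I would use that $\Sing X$ is a nowhere dense analytic subset, hence locally contained in a proper hypersurface: choose $H\in\OO(G)$ with $\Sing X\subset\{H=0\}$ and $h:=H|_U\not\equiv0$ on any irreducible component of $U$, so that $\{h=0\}\cap U'^*$ is a null set. For $\nu\in(0,1)$ let $\lambda_\nu(t):=\min\{1,\max\{0,(t-2\log\nu)/(-\log\nu)\}\}$ and set $\chi_\nu:=\lambda_\nu(\log|h|)$, which vanishes where $|h|\le\nu^2$ (a neighborhood of $\Sing X$), equals $1$ where $|h|\ge\nu$, and satisfies $\chi_\nu\to1$ almost everywhere. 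Since $\dq h=0$, on the annulus $A_\nu:=\{\nu^2\le|h|\le\nu\}$ one computes $\dq\chi_\nu=\lambda_\nu'(\log|h|)\,\dq\o{h}/(2\o{h})$ with $|\lambda_\nu'|=1/\log(1/\nu)$. Because the inclusion $U^*\hookrightarrow G$ is an isometric immersion for the induced metric, restriction does not increase the norm of covectors, so $|\partial h|\le|\partial H|\le C_K$ on any compact $K$, and hence
\[
\|\dq\chi_\nu\|_{L^2(K^*)}^2\ \le\ \frac{C_K^2}{4\,\log^2(1/\nu)}\int_{K^*\cap A_\nu}\frac{dV_X}{|h|^2}.
\]

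The main obstacle is exactly the estimate of this remaining integral, namely that $\int_{K^*\cap A_\nu}|h|^{-2}\,dV_X\le C_K'\,\log(1/\nu)$, which then yields $\|\dq\chi_\nu\|_{L^2(K^*)}^2\le C''/\log(1/\nu)\to0$ and completes the proof. This is the heart of Sibony's argument: it is the statement that a complex-analytic set, being of real codimension at least two, carries no $L^2$-capacity for $\dq$. Concretely, $\log|h|$ is plurisubharmonic on $U^*$ with $\int_{K^*\cap A_\nu}|h|^{-2}|\partial h|^2\,dV_X$ comparable to the Monge–Ampère-type mass $\int_{A_\nu}i\partial\log|h|\wedge\dq\log|h|\wedge\beta^{n-1}$ of the Kähler form $\beta$ of $g$ over the level band $2\log\nu\le\log|h|\le\log\nu$, and the Lelong–Jensen formula bounds this by a constant times the band width $\log(1/\nu)$, the relevant Lelong numbers being finite. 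Once this logarithmic estimate is in place, assembling the three convergences above shows $f_\nu\to f$ in the graph norm on every compact subset, and after mollification the $f_\nu$ are smooth with support disjoint from $\Sing X$, so $f\in\Dom\dq_s$ as claimed.
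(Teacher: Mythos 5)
Your overall strategy coincides with the paper's: multiply $f$ by cut-off functions that vanish near $\Sing X$, tend to $1$, and whose $\dq$ tends to $0$ in $L^2$ on compact subsets, so that local boundedness of $f$ controls the cross term $\dq\chi_\nu\wedge f$ while dominated convergence handles the two diagonal terms. Where you genuinely differ is in the key lemma that $\Sing X$ has vanishing $L^2$-$\dq$-capacity. The paper invokes Sibony's Lemma 1.2 to produce smooth plurisubharmonic functions $u_j$ with $0\le u_j\le 1$, vanishing identically near the (complete pluripolar) singular set and converging uniformly to $1$ on compact subsets of the complement, and then obtains $\|\dq u_j\|^2_{L^2(K\cap X)}\le C_\chi\|u_j^2-1\|_{L^1(L\cap X)}\to 0$ from the inequality $2\, i\partial u_j\wedge\dq u_j\le i\partial\dq u_j^2$ and two integrations by parts against the current of integration over $X$. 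You instead use the classical logarithmic cut-offs $\lambda_\nu(\log|h|)$ along a local hypersurface $\{h=0\}\supset\Sing X$ together with a Lelong--Jensen band estimate. Both are viable; the paper's route is shorter and applies to an arbitrary complete pluripolar set, whereas yours is more hands-on but requires the local hypersurface and still needs the Monge--Amp\`ere mass bound on the possibly singular $X$, which is again best justified by integrating against $[X]$ in the ambient domain.

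One step as written would fail and must be repaired. After computing $|\dq\chi_\nu|=|\lambda_\nu'|\,|\partial h|/(2|h|)$ you bound $|\partial h|\le C_K$, pull it out, and reduce to the claim $\int_{K^*\cap A_\nu}|h|^{-2}\,dV_X\le C'_K\log(1/\nu)$. That claim is false in general: already for $X=\C$ and $h=z^2$ the band is $\{\nu\le|z|\le\sqrt{\nu}\}$ and $\int_{A_\nu}|h|^{-2}\,dA\sim\pi\nu^{-2}$. Since $h$ is forced to vanish on $\Sing X$, you cannot in general arrange $dh\neq 0$ along its zero set, so discarding $|\partial h|$ is a real loss, not a cosmetic one. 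The quantity that does satisfy the logarithmic bound is the one you name in your own justification, namely $\int_{K^*\cap A_\nu}|h|^{-2}|\partial h|^2\,dV_X$, i.e.\ the mass of $i\partial\log|h|\wedge\dq\log|h|\wedge\beta^{n-1}$ over the band $\{2\log\nu\le\log|h|\le\log\nu\}$ (in the $h=z^2$ example this equals $4\pi\log(1/\nu)$). So keep $|\dq\chi_\nu|^2=|\lambda_\nu'|^2\,|\partial h|^2/(4|h|^2)$ intact and apply the Lelong--Jensen/Chern--Levine--Nirenberg estimate directly to that integrand. With this correction your argument goes through.
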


Here, locally bounded means bounded on $K^*=K-\Sing X$ for compact sets $K\subset\subset X$.
A proof of Theorem \ref{thm:domaindqs2}, using resolution of singularities,
was given in \cite{PAMS}, Theorem 1.6.
But Theorem \ref{thm:domaindqs2} can be achieved also in a more direct way
as an easy consequence of Sibony's results on extension of analytic objects \cite{Sibony}.
The following short argument was communicated to the author by Nessim Sibony.

\begin{proof}
As the problem is local, consider a Hermitian complex space $X$ of dimension $n$,
embedded locally in some open set $\Omega\subset \C^N$.
Let $T$ be the positive closed current of integration over $X$, and $A$ the singular set of $X$ (as a subset of $\Omega$),
which is a complete pluripolar set.
By \cite{Sibony}, Lemma 1.2, there exists a sequence $\{u_j\}$ of smooth plurisubharmonic functions, $0\leq u_j\leq 1$,
vanishing identically on a neighborhood of $A$, converging uniformly to $1_{\Omega\setminus A}$
on compact subsets of $\Omega\setminus A$.

Consider now compact sets $K, L \subset \Omega$ with $K\subset \mbox{int}(L)$,
and let $0\leq \chi\leq 1$ be a test function
with support in $L$ and $\chi =  1$ on $K$. Let $\omega:=i\partial\dq \|z\|^2$ be the K\"ahler form in $\C^N$. 
Then, similarly as in the proof of Theorem 1.3 in \cite{DiSi}, p. 361, we have
\begin{eqnarray*}
\| \dq u_j\|^2_{L^2(K\cap X)} &\leq& \int \chi^2 i\partial u_j\wedge \dq u_j \wedge T \wedge \omega^{n-1}
\leq \frac{1}{2} \int \chi^2 i\partial \dq u_j^2 \wedge T \wedge \omega^{n-1}\\
&=& \frac{1}{2} \int \chi^2 i\partial \dq (u_j^2-1) \wedge T \wedge \omega^{n-1}
= \frac{1}{2} \int (u_j^2 -1) i\partial\dq \chi^2 \wedge T \wedge \omega^{n-1}\\
&\leq& C_\chi \|u_j^2-1\|_{L^1(L\cap X)} \overset{j\rightarrow \infty}{\longrightarrow} 0,
\end{eqnarray*}
where we have convergence to $0$ because the $u_j^2$ converge uniformly to $1$ on compact subsets of $\Omega\setminus A$.

So, let $f$ be a bounded form in the domain of $\dq_w$.
Then it is easy to see (using the H\"older inequality)
that $f_j:= u_j f$ is a sequence as required in \eqref{eq:ds1}, \eqref{eq:ds2}.
\end{proof}

\medskip
\subsection{Hartogs' extension theorem}\label{ssec:hartogs}

Let $X$ be a connected normal complex space of dimension $n\geq 2$ which is cohomologically $(n-1)$-complete.
Then
\begin{eqnarray}\label{eq:hartogs1}
H^{0,1}_{s,cpt}(X) &=&  0
\end{eqnarray}
by Theorem \ref{thm:vanishing1}. As in \cite{Hoe2}, Section 2.3,
one can use \eqref{eq:hartogs1} to give a short proof of Hartogs'
extension theorem in its most general form by the $\dq$-method of Ehrenpreis.
We repeat the argument for convenience of the reader.

Let $D$ be a domain in $X$ and $K\subset D$ a compact subset such that $D\setminus K$ is connected.
Let $f\in \OO(D\setminus K)$. We claim that $f$ has a unique holomorphic extension to the whole domain $D$.

To show that, let $\chi \in C^\infty_{cpt}(X)$ be a smooth cut-off function that is identically $1$ in a neighborhood of $K$
such that $C:=\supp \chi \subset\subset D$.

Consider
$$g:=(1-\chi) f \in C^\infty(D),$$
which is an extension of $f$ to $D$, but unfortunately not holomorphic. We have to fix it by
the idea of Ehrenpreis. So, let
$$\omega:=\dq g,$$
which is a bounded $\dq_w$-closed $(0,1)$-form with compact support in $D$.
By Theorem \ref{thm:domaindqs2}, $\omega$ is also $\dq_s$-closed.
We may consider $\omega$ as an $L^2$-form on $X$ with compact support.

But $H^{0,1}_{s,cpt}(X)=0$ as seen above, \eqref{eq:hartogs1}.
So, there exists $h\in L^{0,0}_{cpt}(X^*)$ such that
$$\dq_s h=\omega,$$
and $h$ is holomorphic on $X^* \setminus C$ (where $\omega\equiv 0$).
Since $X$, being $(n-1)$-complete, is non-compact,
it follows by standard arguments (involving the identity theorem) that $h\equiv 0$ on an open subset of $D\setminus C\subset D\setminus K$.
Note that $X$ is normal, thus locally irreducible, and so connected subsets of $X$ satisfy the identity theorem.

Let
\begin{eqnarray*}
F:=(1-\chi)f-h \in \OO(X^*).
\end{eqnarray*}
As $X$ is normal, $F$ extends by the Riemann extension theorem to a holomorphic function on $X$, say $F\in\OO(X)$ for ease of notation
(see e.g. \cite{CAS}, Chapter 7.4.2).
As $h$ is vanishing on an open subset of $D\setminus K$,
$F$ is the desired unique extension by the identity theorem.

\smallskip
\subsection{Solution of the $\dq$-equation for bounded forms with compact support}

By the method applied in the last section, we obtain also:

\begin{thm}\label{thm:vanishing2}
Let $X$ be a Hermitian complex space of pure dimension $n$, $F\rightarrow X$ a Hermitian holomorphic line bundle,
$\Omega\subset X$ a cohomologically $q$-complete subset.
Let $1\leq r \leq n-q$ and $f$ an $F$-valued bounded $(0,r)$-form on $\Omega^*$ with compact support in $\Omega$ that is $\dq$-closed in the sense of distributions.
Then there exists an $F$-valued $L^2$-form with compact support in $\Omega$, $h\in L^{0,r-1}_{cpt}(\Omega,F)$, s.t. $\dq h=f$
in the sense of distributions on $\Omega^*$.
\end{thm}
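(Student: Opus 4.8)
The plan is to run the same Ehrenpreis-type argument as in the Hartogs section, reducing everything to the $\dq_s$-vanishing of Theorem \ref{thm:vanishing1}. First I would record that $f$ is actually an $L^2$-form: since $f$ is bounded on $\Omega^*$, has compact support in $\Omega$, and the regular part $X^*$ carries a Hermitian metric of locally finite volume, we have $f\in L^{0,r}_{cpt}(\Omega,F)$. Thus $f$ is a legitimate $L^2$-cochain and not merely a distribution.

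Next I would upgrade the closedness from the weak to the strong sense. By hypothesis $f$ is $\dq_w$-closed and locally bounded, so Theorem \ref{thm:domaindqs2} gives $f\in\Dom\dq_s$ with $\dq_s f=\dq_w f=0$. Together with $\supp f\subset\subset\Omega$, this means $f\in\Gamma_{cpt}(\Omega,\mathcal{F}^{0,r}(F))$ is $\dq_s$-closed and hence represents a cohomology class $[f]\in H^{0,r}_{s,cpt}(\Omega,F)$.

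Now I would kill this class using the vanishing theorem, taking care of the bundle bookkeeping. Applying Theorem \ref{thm:vanishing1} to the dual line bundle $F^*$ (so that the $(0,\cdot)$-forms there carry $(F^*)^*=F$) yields $H^{0,n-\rho}_{s,cpt}(\Omega,F)=0$ for all $\rho\geq q$, i.e. $H^{0,s}_{s,cpt}(\Omega,F)=0$ for all $0\leq s\leq n-q$. Since $1\leq r\leq n-q$ by assumption, we get $H^{0,r}_{s,cpt}(\Omega,F)=0$, so $[f]=0$. Therefore there exists $h\in\Gamma_{cpt}(\Omega,\mathcal{F}^{0,r-1}(F))$, that is $h\in L^{0,r-1}_{cpt}(\Omega,F)$ with compact support in $\Omega$, such that $\dq_s h=f$. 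Since $\dq_s\subset\dq_w$, this gives in particular $\dq h=f$ in the sense of distributions on $\Omega^*$, which is the assertion.

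I do not expect a genuine obstacle here, as the substance is already packaged into the two cited results: the entire technical content of passing from $\dq_w$ to $\dq_s$ lives in Theorem \ref{thm:domaindqs2} (Sibony's extension argument), and the solvability lives in Theorem \ref{thm:vanishing1}. The only points demanding care are the two routine verifications above, namely that $f\in L^2_{cpt}$ and that the degree range $1\leq r\leq n-q$ matches the vanishing range $0\leq s\leq n-q$ after dualizing the bundle; both are immediate.
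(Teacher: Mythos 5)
Your proposal is correct and is exactly the paper's argument: the paper's entire proof reads ``Just combine Theorem \ref{thm:vanishing1} and Theorem \ref{thm:domaindqs2},'' and you have simply spelled out the two routine verifications (that $f\in L^{0,r}_{cpt}(\Omega,F)$ and that the degree/bundle bookkeeping after dualizing matches the range $1\leq r\leq n-q$). Nothing is missing.
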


\begin{proof}
Just combine Theorem \ref{thm:vanishing1} and Theorem \ref{thm:domaindqs2}.
\end{proof}

For $(0,1)$-forms, we can say more, using a resolution of singularities:

\begin{thm}\label{thm:vanishing3}
In the situation of Theorem \ref{thm:vanishing2}, let $\Omega$ be cohomologically $(n-1)$-complete and $f$ a bounded $(0,1)$-form.
Then the solution $h$ can be chosen to be a bounded function on $\Omega^*$. We write for that:
$$H^{0,1}_{L^\infty,cpt}(\Omega^*,F)=0.$$
\end{thm}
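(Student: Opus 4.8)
The plan is to reduce everything to the smooth situation via a resolution of singularities, where a bounded primitive of the $\dq$-equation can be extracted from an $L^2$-primitive by interior regularity. Let $\pi\colon M\to X$ be a resolution of singularities and equip $M$ with a positive definite Hermitian metric. Since $f$ is bounded up to $\Sing X$ and has compact support in $\Omega$, and since $\pi$ is proper and biholomorphic over $X^*=X\setminus\Sing X$, the pull-back $\pi^*f$ is a $\dq$-closed $\pi^*F$-valued $(0,1)$-form which is bounded on $\pi^{-1}(\Omega)$ (the differential $d\pi$ is bounded on compact subsets of $M$, so $|\pi^*f|$ stays bounded across the exceptional divisor) and has compact support in $\pi^{-1}(\Omega)$. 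The goal is to produce a bounded, compactly supported $u$ solving $\dq u=\pi^*f$ on $M$ and then to set $h:=u\circ\big(\pi|_{\pi^{-1}(\Omega^*)}\big)^{-1}$, which is a bounded solution of $\dq h=f$ on $\Omega^*$ with compact support in $\Omega$.

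First I would establish the compactly supported $L^2$-solvability on the resolution, namely
\[
H^{0,1}_{w,cpt}\big(\pi^{-1}(\Omega),\pi^*F\big)=0 .
\]
Since $M$ is smooth, $\dq_w=\dq_s$ there, and by the $L^2$-Serre duality of Theorem~\ref{thm:main1} this group is the topological dual of $H^{n,n-1}_{w,loc}(\pi^{-1}(\Omega),\pi^*F^*)$ once this latter group is Hausdorff. By the topological Dolbeault isomorphism (Theorem~\ref{thm:topology}; see also \eqref{eq:new11}) one has $H^{n,r}_{w,loc}(\pi^{-1}(\Omega),\pi^*F^*)\cong H^r\big(\pi^{-1}(\Omega),\mathcal{K}_M(\pi^*F^*)\big)$, and here Takegoshi's vanishing enters: by \eqref{eq:new3} the higher direct images $R^q\pi_*\mathcal{K}_M(\pi^*F^*)$ vanish for $q>0$, so the Leray spectral sequence degenerates and, using \eqref{eq:new2},
\[
H^r\big(\pi^{-1}(\Omega),\mathcal{K}_M(\pi^*F^*)\big)\cong H^r\big(\Omega,\mathcal{K}_X(F^*)\big).
\]
As $\Omega$ is cohomologically $(n-1)$-complete and $\mathcal{K}_X(F^*)$ is coherent, the right-hand side vanishes for $r=n-1$ and $r=n$. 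Hence $H^{n,n-1}_{w,loc}$ and $H^{n,n}_{w,loc}$ on $\pi^{-1}(\Omega)$ are zero, in particular Hausdorff, and Serre duality yields the vanishing above; thus there is $u\in L^{0,0}_{cpt}(\pi^{-1}(\Omega),\pi^*F)$ with $\dq u=\pi^*f$.

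It remains to upgrade this $L^2$-solution to a bounded one, and this is where the smoothness of $M$ is used decisively. On the manifold $M$, $u$ is a locally $L^2$ function solving $\dq u=\pi^*f$ with $\pi^*f\in L^\infty_{loc}$. Writing $u$ locally through the Bochner--Martinelli--Koppelman homotopy formula as $u=T(\pi^*f)+v$ with $v$ holomorphic (hence smooth, so locally bounded), and using that the local solution operator $T$ sends bounded $(0,1)$-forms to Hölder-continuous, in particular bounded, functions, one obtains $u\in L^\infty_{loc}(M)$. Since $u$ has compact support, it is globally bounded, and descending via the biholomorphism $\pi$ over $\Omega^*$ gives the desired bounded solution $h$ with $\dq h=f$.

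I expect the genuine obstacle to be the first reduction rather than the final regularity step. On $X$ itself there is no interior regularity at the singular points that would force an $L^2$-solution to be bounded, so one must pass to the resolution; but then compactly supported solvability is no longer automatic and has to be recovered on $\pi^{-1}(\Omega)$. The mechanism is precisely the interplay of Takegoshi's vanishing $R^q\pi_*\mathcal{K}_M=0$ (pushing the relevant top-degree cohomology down to $X$), the $(n-1)$-completeness of $\Omega$ (killing it there), and $L^2$-Serre duality (converting this into the compactly supported $(0,1)$-vanishing upstairs, together with the required Hausdorffness). A secondary point to verify carefully is that $\pi^*f$ really stays bounded across the exceptional divisor, which rests on $f$ being bounded \emph{up to} $\Sing X$.
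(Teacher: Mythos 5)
Your proof follows the same route as the paper's: pull $f$ back to a resolution $\pi:\Omega'\to\Omega$, solve $\dq u=\pi^*f$ with compact support and bounded $u$ upstairs, and push the solution down via the biholomorphism over $\Omega^*$. The only difference is one of packaging: where the paper simply cites \cite{CR}, Theorem 2.6, for the bounded compactly supported $(0,1)$-vanishing on the resolution (and \cite{Rp1}, Theorem 3.2, for the fact that $\pi^*f$ remains $\dq$-closed across the exceptional divisor --- a point you assert but should likewise justify), you correctly rederive that vanishing inside the paper's own framework (Serre duality plus Takegoshi and Leray, giving an $L^2$-solution) and then upgrade to boundedness by interior regularity on the smooth model, which is a valid, self-contained unpacking of the cited result.
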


\begin{proof}
Consider a resolution of singularities $\pi: \Omega'\rightarrow \Omega$.
Then $\pi^* f$ is a bounded $(0,1)$-form and $\dq$-closed in the sense of distributions by
\cite{Rp1}, Theorem 3.2 (the $\dq$-equation extends over the exceptional set).
But $H^{0,1}_{cpt}(\Omega',\pi^* F)=0$ by \cite{CR}, Theorem 2.6.
So, there exists a bounded function $h$ with compact support and $\dq h=\pi^* f$.
Pushing forward $h$ outside the exceptional set gives the desired bounded solution.
\end{proof}

\section{$L^2$-cohomology and rational singularities}\label{sec:rational}

\subsection{Proof of Theorem \ref{thm:main3a}}\label{ssec:main3a}

Irreducible components of different dimension can be treated separately,
so we can assume that $X$ is of pure dimension $n$.

Let $\Omega\subset X$ be holomorphically convex.
By Theorem \ref{thm:main2} (and its proof) the cohomology groups
$H^{0,q}_{s,loc}(\Omega)$, $H^{n,n-q}_{w,cpt}(\Omega)$, $H^{0,q}_{w,loc}\big(\pi^{-1}(\Omega)\big)$,
$H^{n,n-q}_{w,cpt}\big(\pi^{-1}(\Omega)\big)$
are Hausdorff for all $0\leq q \leq n$.

So, there exist by Theorem \ref{thm:main1} non-degenerate pairings
(recall that $\dq_w$ and $\dq_s$ coincide on $M$ as there are no singularities at all)
\begin{eqnarray}\label{eq:niso1}
H^{0,q}_{s,loc}(\Omega) \times H^{n,n-q}_{w,cpt}(\Omega) \longrightarrow \C
\end{eqnarray}
and
\begin{eqnarray}\label{eq:niso2}
H^{0,q}_{w,loc}\big(\pi^{-1}(\Omega)\big) \times H^{n,n-q}_{w,cpt}\big(\pi^{-1}(\Omega)\big) \longrightarrow \C.
\end{eqnarray}
But $\pi$ induces by pull-back of $(n,q)$-forms natural isomorphisms
$$[\pi^*]: H^{n,n-q}_{w,cpt}(\Omega) \overset{\cong}{\longrightarrow} H^{n,n-q}_{w,cpt}\big(\pi^{-1}(\Omega)\big)$$
(see \eqref{eq:new12}).
So, \eqref{eq:niso1} and \eqref{eq:niso2} induce dual isomorphisms
\begin{eqnarray}\label{eq:new61}
H^q\big(\pi^{-1}(\Omega),\OO_M\big) \cong H^{0,q}_{w,loc}\big(\pi^{-1}(\Omega)\big) \overset{\cong}{\longrightarrow}
H^{0,q}_{s,loc}(\Omega)
\end{eqnarray}
for all $0\leq q\leq n$. \eqref{eq:new61} is induced by the push-forward of $L^2$-forms under $\pi^{-1}$
(defined outside the exceptional set), and it is continuous by \cite{eins}, (14).
Thus, \eqref{eq:new61} is a topological isomorphism by the open mapping theorem.

\subsection{Proofs of Corollary \ref{cor:main1} and of Corollary \ref{cor:main2}}

Both statements are simple corollaries of Theorem \ref{thm:main3a}. For the proof of Corollary \ref{cor:main1},
just recall that
$$\big(\mathcal{H}^0(\mathcal{F}^{0,*})\big)_x = \lim_{\substack{\longrightarrow\\ x\in\Omega}} H^{0,q}_{s,loc}(\Omega)$$
and
$$\big( R^q \pi_* \OO_M\big)_x = \lim_{\substack{\longrightarrow\\ x\in\Omega}} H^q\big(\pi^{-1}(\Omega),\OO_M\big).$$
Recall also that $(\pi_* \OO_M)_x = \widehat{\OO}_{X,x}$,
where $\widehat{\OO}_X$ denotes the sheaf of weakly holomorphic functions.

Corollary \ref{cor:main2} is now in turn a simple corollary of Corollary \ref{cor:main1},
keeping in mind that a point $x\in X$ is by definition rational if it is normal and $\big( R^q \pi_* \OO_M\big)_x =0$ for all $q>0$.

\smallskip
\subsection{Homogeneous isolated singularities}

For the sake of completeness, let us include also the following observation:

\begin{thm}\label{thm:isolated}
Let $X$ be a Hermitian complex space with only homogeneous (conical) isolated singularities,
$\pi: M\rightarrow X$ a resolution of singularities and $\Omega \subset X$ holomorphically convex.
Then push-forward of forms induces for all $q\geq 0$ a natural topological isomorphism
$H^q\big(\pi^{-1}(\Omega),\OO_M\big) \overset{\cong}{\longrightarrow} H^{0,q}_{w,loc}(\Omega)$.
The $L^2$-$\dq$-complex
\begin{eqnarray}\label{eq:exactnessM002}
0\rightarrow \OO_X \longrightarrow \mathcal{C}^{0,0} \overset{\dq_w}{\longrightarrow}
\mathcal{C}^{0,1} \overset{\dq_w}{\longrightarrow} \mathcal{C}^{0,2} 
\overset{\dq_w}{\longrightarrow} \mathcal{C}^{0,3} \overset{\dq_w}{\longrightarrow} ...
\end{eqnarray}
is exact in a point $x\in X$ if and only if $x$ is a rational point.
\end{thm}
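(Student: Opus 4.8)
The plan is to mirror the proofs of Theorem \ref{thm:main3a} and Theorem \ref{cor:main2}, replacing the $\dq_s$-cohomology by the $\dq_w$-cohomology throughout, and to use that the hypothesis of homogeneous (conical) isolated singularities is precisely what supplies the Hausdorffness of the weak complex. As usual we may assume that $X$ is of pure dimension $n$, and we fix a holomorphically convex $\Omega\subset X$, so that $\pi^{-1}(\Omega)$ is again holomorphically convex.

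First I would establish the topological isomorphism. Since $X$ has only homogeneous isolated singularities, Theorem \ref{thm:main2} ensures that $H^{0,q}_{w,loc}(\Omega)$ and $H^{n,n-q}_{s,cpt}(\Omega)$ are Hausdorff for all $0\leq q\leq n$, while $H^{0,q}_{w,loc}(\pi^{-1}(\Omega))$ and $H^{n,n-q}_{w,cpt}(\pi^{-1}(\Omega))$ are Hausdorff exactly as in the proof of Theorem \ref{thm:main3a}. Hence Theorem \ref{thm:main1}, in its $\{w,s\}$ form with $p=0$, yields a non-degenerate topological pairing
\begin{eqnarray*}
H^{0,q}_{w,loc}(\Omega) \times H^{n,n-q}_{s,cpt}(\Omega) \longrightarrow \C,
\end{eqnarray*}
and, since $\dq_w$ and $\dq_s$ coincide on the smooth manifold $M$, a non-degenerate topological pairing
\begin{eqnarray*}
H^{0,q}_{w,loc}(\pi^{-1}(\Omega)) \times H^{n,n-q}_{w,cpt}(\pi^{-1}(\Omega)) \longrightarrow \C.
\end{eqnarray*}
Pull-back of $(n,n-q)$-forms gives the topological isomorphism $[\pi^*]\colon H^{n,n-q}_{s,cpt}(\Omega) \overset{\cong}{\longrightarrow} H^{n,n-q}_{w,cpt}(\pi^{-1}(\Omega))$ by \eqref{eq:new12b} and Lemma \ref{lem:topology2}. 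Dualizing $[\pi^*]$ and feeding it through the two pairings, exactly as in the proof of Theorem \ref{thm:main3a}, produces dual topological isomorphisms
\begin{eqnarray*}
H^q(\pi^{-1}(\Omega),\OO_M) \cong H^{0,q}_{w,loc}(\pi^{-1}(\Omega)) \overset{\cong}{\longrightarrow} H^{0,q}_{w,loc}(\Omega),
\end{eqnarray*}
which is the first assertion; the resulting map is push-forward of forms, being dual to $\pi^*$.

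Next I would pass to germs, just as Theorem \ref{cor:main1} is deduced from Theorem \ref{thm:main3a}. Taking the direct limit of the isomorphism above over the cofinal system of holomorphically convex (for instance, Stein) neighborhoods of an isolated singular point $x$ gives $\big(\mathcal{H}^q(\mathcal{C}^{0,*})\big)_x \cong (R^q\pi_*\OO_M)_x$ for all $q\geq 0$. For $q=0$ this reads $\ker\dq_w^{0,0} = (\pi_*\OO_M)_x = \widehat{\OO}_{X,x}$, since an $L^2_{loc}$ function that is holomorphic on $X^*$ pulls back to an $L^2_{loc}$ holomorphic function on $M$, hence extends across the exceptional set and descends to a weakly holomorphic function, and conversely. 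Consequently \eqref{eq:exactnessM002} is exact at $\mathcal{C}^{0,0}$ precisely when $\widehat{\OO}_{X,x}=\OO_{X,x}$, i.e. when $x$ is normal, and it is exact at $\mathcal{C}^{0,q}$ for $q>0$ precisely when $(R^q\pi_*\OO_M)_x=0$. As $x$ is rational by definition if and only if it is normal with $(R^q\pi_*\OO_M)_x=0$ for all $q>0$, the complex \eqref{eq:exactnessM002} is exact at $x$ if and only if $x$ is a rational point.

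The main obstacle is the Hausdorffness input. In contrast to the $\dq_s$-complex of Theorem \ref{cor:main2}, the weak complex $\mathcal{C}^{0,*}$ need not have closed range in bidegree $(0,q)$ in general, so the duality argument genuinely depends on the restriction to homogeneous isolated singularities, which is exactly the case in which Theorem \ref{thm:main2} and the topological pull-back isomorphism \eqref{eq:new12b} are both available. A secondary point to treat with care is the germ-level identification $\ker\dq_w^{0,0}=\widehat{\OO}_X$ together with the cofinality of holomorphically convex neighborhoods at the isolated singular point, which legitimizes the passage to the direct limit.
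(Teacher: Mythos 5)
Your proposal is correct and follows essentially the same route as the paper: the paper's own proof simply notes that $\mathcal{K}_X\cong\mathcal{K}_X^s$ for homogeneous isolated singularities (which is exactly what makes \eqref{eq:new12b} and the relevant clause of Theorem \ref{thm:main2} available) and then declares the rest ``completely analogous'' to the proofs of Theorem \ref{thm:main3a} and Theorem \ref{cor:main2} -- which is precisely the argument you spell out. One small caveat: your parenthetical claim that an $L^2_{loc}$ holomorphic function on $X^*$ ``pulls back to an $L^2_{loc}$ holomorphic function on $M$'' is not valid as stated (pull-back preserves the $L^2$-condition for $(n,q)$-forms, not for functions, since $\pi^*dV_X$ degenerates along the exceptional set); however, the identity $(\ker\dq_w^{0,0})_x=\widehat{\OO}_{X,x}$ already follows from the surjectivity of your $q=0$ push-forward isomorphism, and the paper itself sidesteps the issue by combining the trivial inclusions $\OO_{X,x}\subset\widehat{\OO}_{X,x}\subset(\ker\dq_w^{0,0})_x$ with $(\ker\dq_w^{0,0})_x\subset\OO_{X,x}$ for normal $x$ via the Riemann extension theorem, which suffices for the rationality criterion.
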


\begin{proof}
If $X$ has only homogeneous isolated singularities, then $\mathcal{K}_X \cong \mathcal{K}_X^s$
by \cite{eins}, Theorem 1.10, with $D=\emptyset$ because homogeneous isolated singularities can be resolved by a single blow-up.
So, the statements can be seen completely analogous to the proofs of Theorem \ref{thm:main3a}
and of Corollary \ref{cor:main2}.

Note that clearly $\OO_{X,x} \subset \widehat{\OO}_{X,x} \subset \big(\ker \dq_w^{0,0}\big)_x$. 
Moreover, $\big(\ker\dq_w^{0,0}\big)_x \subset \OO_{X,x}$ 
if $X$ is normal by the Riemann Extension Theorem for singular spaces 
(by normality, the codimension of the singular set is bigger than one).
So, a point $x\in X$ is normal exactly if $\big(\ker\dq_w^{0,0}\big)_x = \OO_{X,x}$.
\end{proof}

{\bf Acknowledgments.}
This research was supported by the Deutsche Forschungsgemeinschaft (DFG, German Research Foundation), 
grant RU 1474/2 within DFG's Emmy Noether Programme.
The author thanks Nessim Simony for pointing out the short proof of Theorem \ref{thm:domaindqs2},
and he is grateful to an unknown referee for many valuable comments on an earlier version of this paper,
which helped to improve the presentation considerably.



\end{document}